\documentclass[a4paper,8pt]{article}
\usepackage[english]{babel}
\usepackage{geometry}
\geometry{textwidth=16cm}
\usepackage[T1]{fontenc}
\usepackage[latin1]{inputenc}
\usepackage{amsmath,amssymb,mathrsfs,amsthm}
\usepackage{soul}
\usepackage{epsfig}
\usepackage{hyperref}
\usepackage{graphicx}
\usepackage{xypic}
\usepackage{fancyhdr}
\fancypagestyle{plain}{
\lhead{}
\rhead{}

}

\usepackage{tocloft}

\newtheorem{theorem}{Theorem}[section]
\newtheorem{proposition}[theorem]{Proposition}
\newtheorem{claim}[theorem]{Claim}
\newtheorem{lemma}[theorem]{Lemma}

\newtheorem{Corollaire}[theorem]{Corollary}
\newtheorem{definition}[theorem]{Definition}

\newtheorem{remarque}[theorem]{Remark}

\newcommand{\vc}{\|\cdot\|}
\newcommand{\C}{\mathbb{C}}

\newcommand{\lra}{\longrightarrow}
\newcommand{\al}{\alpha}
\newcommand{\la}{\lambda}

\newcommand{\R}{\mathbb{R}}
\newcommand{\cl}{\mathcal{C}^\infty}
\newcommand{\p}{\mathbb{P}}
\newcommand{\eps}{\varepsilon}

\newcommand{\si}{\sigma}
\newcommand{\h}{\mathcal{H}}

\newcommand{\N}{\mathbb{N}}
\newcommand{\z}{\overline{z}}
\newcommand{\pt}{\partial}
\newcommand{\dif}{\frac{\pt}{\pt z}}
\title{The spectral theory of generalized Laplacians  associated to integrable metrics on compact Riemann surfaces}
\date{}
\author{Mounir Hajli}
\begin{document}

\maketitle

\begin{abstract}
 We extend the spectral theory of generalized Laplacians to integrable metrics on compact Riemann surfaces. As a consequence,
 we attach in a direct way,  a holomorphic analytic torsion to any integrable metrics. We also provide a different approach to
 define the analytic
 torsion. We prove that both approaches agree.
\end{abstract}

\tableofcontents
\section{Introduction}

The main goal of this paper is the extension, in the context of the Arakelov geometry, of the notion of the holomorphic analytic
torsion to a large class of
singular metrics, with applications to the special case of canonical metrics on $\p^1$, viewed as toric variety. As an application,
we provide an explanation for the results and the  computations
in \cite{Mounir1} and \cite{Mounir2}.\\

Let $X$ be a compact Riemann surface, and $(\mathcal{O},h_{\mathcal{O}})$ the trivial line bundle equipped with a constant
metric over $X$. In this
article, we
prove that  the classical spectral theory of Laplacians can be
extended to the class of integrable metrics on $X$. Namely, for any   integrable metric $h_{X,\infty}$ on $X$, we construct a
Laplacian  attached to $((X,h_{X,\infty});(\mathcal{O},h_{\mathcal{O}}))$, and  we denote it by $\Delta_{X,\infty}$. We  show that
$\Delta_{X,\infty}$  has a infinite discrete and positive spectrum, and we prove that  the associated Zeta function
$\zeta_{\Delta_{X,\infty}}$,  has
the same properties as in the  classical setting. In particular, we establish that $\zeta_{\Delta_{X,\infty}}$ admits a holomorphic
continuation at $s=0$. We set $T\bigl((X,\omega_{X,\infty}),(\mathcal{O},h_{\mathcal{O}}) \bigr):=\zeta'_{\Delta_{X,\infty}}(0)$,
and we call it
the holomorphic analytic  torsion associated to $\bigl((X,\omega_{X,\infty}),(\mathcal{O},h_{\mathcal{O}}) \bigr) $. Moreover,
given
$(h_{X,p})_{p\in \N}$ a sequence of smooth hermitian metrics on $X$, converging uniformly in suitable way to $h_{X,\infty}$, we
prove that  the real sequence
$\bigl(T((X,\omega_{X,p}),(\mathcal{O},h_{\mathcal{O}})) \bigr)_{p\in \N}$ converges to $T((X,\omega_{X,\infty}),(\mathcal{O},h_{\mathcal{O}}) )$.

Let us recall the construction of the holomorphic analytic torsion. For sake of simplicity, we restrict ourselves to compact
riemannian surfaces. Let $X$ be a compact riemannian surface equipped with a smooth metric $h_X$,  we denote by $\omega_X$
the corresponding Kähler form, and let $(\mathcal{O},h_{\mathcal{O}})$ be a
trivial line bundle on $X$, endowed with a constant metric. We can equip $A^{(0,0)}(X)$, the space of  smooth functions on $X$,    with a hermitian
 product  § \eqref{rappelLAPCLA}, and then we consider   $\Delta$
   the Laplacian acting on $A^{(0,0)}(X)$ § \eqref{rappelLAPCLA}.
   It is known that $\Delta$ admits a discrete, positive and infinite
spectrum, see for instance \cite[§ 6]{GH}. Let $\la_1\leq \la_2\leq \ldots$ be the
 nonzero eigenvalues  of $\Delta$, counted with their multiplicities
and order in increasing order. Using the
 spectral theory of heat kernels,  see \cite{heat}, one proves  that for any $s\in \C$ such
 that $\mathrm{Re}(s)>1$, the sum:
 \[
 \zeta_{\Delta}(s)=\sum_{n\geq 1}\frac{1}{\la_n^s},
 \]
 converges absolutely and admits a meromorphic
continuation
 to the whole complex space. Moreover, this continuation
 is holomorphic at $s=0$. According to Ray and Singer \cite{RaySinger}, we define the holomorphic analytic
 torsion associated to $((X,h_X),(\mathcal{O},h_{\mathcal{O}}))$ as follows:
 \[
 T((X,h_X),(\mathcal{O},h_{\mathcal{O}}))=\zeta_{\Delta}'(0)\;\footnote{Notice that in \cite{RaySinger}, by definition $ T((X,h_X),(\mathcal{O},h_{\mathcal{O}}))$
 is equal to $\zeta_{\Delta_1}'(0)$,
 where $\Delta_1$ is the Laplacian associated to $((X,h_X);(\mathcal{O},h_{\mathcal{O}}))$ and acting on $A^{(0,1)}(X)$. But from \cite[(6) p.131]{Soulé}, we have $\zeta_{\Delta}'(0)=\zeta_{\Delta_1}'(0)$.}.
 \]
 As  pointed out in \cite{heat},  the smoothness of the metrics is a necessary condition in order to defined the holomorphic
 analytic
 torsion. In this article, we extend the latter theory to compact Riemann surfaces equipped with integrable metrics.\\

Let us review the contents of the article.  Let $X$ be a compact Riemann surface, and $(\mathcal{O},h_{\mathcal{O}})$ the trivial line bundle equipped with a constant metric on $X$. Let  $h_{X,\infty}$ be an  integrable metric on $X$, see definition \eqref{rappelmetint}, we
will construct  a  spectral theory attached to $h_{X,\infty}$ and $h_{\mathcal{O}}$, and which extend the classical theory. In section
\eqref{rappelLAPCLA},  we recall the definition and the construction of
the Laplacian associated to smooth metrics on compact Riemann surfaces.  We extend in
section \eqref{paragrapheLapX},  this notion  to any integrable metric $h_{X,\infty}$ on $X$, namely we construct a singular
Laplacian denoted by $\Delta_{X,\infty}$ defined on $A^{(0,0)}(X)$. Our first theorem (theorem \eqref{laplaceTX}) is stated as follows:
\begin{theorem}
There exists $(h_{X,u})_{u\geq 1}$ a sequence of smooth hermitian metrics on $X$, converging uniformly to $h_{X,\infty}$ such that:
\begin{enumerate}
\item
\[
\underset{u\mapsto \infty}{\lim}\bigl\| \Delta_{{}_{X,u}}\xi \bigr\|^2_{L^2,u}=\bigl\|
\Delta_{{}_{X,\infty}}\xi
\bigr\|^2_{L^2,\infty}<\infty,
\]
\item $\Delta_{X,\infty}$ is a linear operator from $A^{(0,0)}(X)$ to $\h_0(X)$ (where
 $\h_0(X)$ is the completion of $A^{(0,0)}(X)$ with respect to the $L^2$-norm).
\item
\[
\bigl(\Delta_{{}_{X,\infty}}\xi,\xi' \bigr)_{L^2,\infty}=
\bigl(\xi,\Delta_{{}_{X,\infty}}\xi' \bigr)_{L^2,\infty},
\]

\item
\[
\bigl(\Delta_{{}_{X,\infty}}\xi,\xi \bigr)_{L^2,\infty}\geq 0,
\]
\end{enumerate}

for any $\xi,\xi'\in A^{0,0}(X)$.

\end{theorem}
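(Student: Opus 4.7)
The strategy is to import everything from the classical smooth theory via a well-chosen approximating sequence, exploiting the key structural identity that, for any smooth K\"ahler form $\omega_{X,u}$ on the curve, the Laplacian on functions satisfies
\[
\Delta_{X,u}\xi \cdot \omega_{X,u} \;=\; \frac{i}{\pi}\pt\bar{\pt}\xi,
\]
so that the $(1,1)$-form $\Delta_{X,u}\xi \cdot \omega_{X,u}$ is \emph{independent of the metric}. This suggests the right definition of the singular Laplacian as $\Delta_{X,\infty}\xi = (i/\pi)\pt\bar\pt\xi \,/\, \omega_{X,\infty}$, a quotient of a smooth $(1,1)$-form by a positive Radon measure, which is well defined almost everywhere since the integrability of $h_{X,\infty}$ forces the density of $\omega_{X,\infty}$ to be a locally integrable function positive a.e.

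First I would produce the sequence $(h_{X,u})_{u\geq 1}$: by the definition of integrable metric recalled in \eqref{rappelmetint}, the local potential of $h_{X,\infty}$ is a difference of uniform limits of smooth semipositive psh potentials, so one extracts smooth approximants whose densities $g_u$ converge locally uniformly to the density $g_\infty$ of $\omega_{X,\infty}$. Then for $\xi\in A^{(0,0)}(X)$,
\[
\bigl\|\Delta_{X,u}\xi\bigr\|_{L^2,u}^2 \;=\; \int_X \frac{|(i/\pi)\pt\bar\pt\xi|^2}{\omega_{X,u}} \;=\; \int_X \frac{|(i/\pi)\pt\bar\pt\xi|^2_{\mathrm{eucl}}}{g_u}\,dV,
\]
where $|\cdot|_{\mathrm{eucl}}$ is computed in a fixed finite atlas. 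Since $\xi$ is smooth, the numerator is smooth with compact support in each chart, and Part (1) reduces to a dominated convergence argument against the pointwise a.e.\ convergence $1/g_u \to 1/g_\infty$. Part (2) is then immediate: the limit being the finite value $\|\Delta_{X,\infty}\xi\|_{L^2,\infty}^2$, the operator $\Delta_{X,\infty}$ sends $A^{(0,0)}(X)$ into $\h_0(X)$.

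For Parts (3) and (4), I would bypass the limit altogether by rewriting the inner product in the metric-free form
\[
\bigl(\Delta_{X,u}\xi,\xi'\bigr)_{L^2,u} \;=\; \int_X \overline{\xi'}\cdot \Delta_{X,u}\xi\cdot \omega_{X,u} \;=\; \frac{1}{\pi}\int_X \overline{\xi'}\, i\pt\bar\pt\xi \;=\; \frac{1}{\pi}\int_X i\pt\xi\wedge\bar\pt\overline{\xi'},
\]
using Stokes on the compact surface $X$. The right-hand side is independent of $u$, is symmetric in $(\xi,\xi')$ under the Hermitian conjugation, and is manifestly $\geq 0$ when $\xi=\xi'$. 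Taking $u\to \infty$ and using Part (1) to identify the left-hand side with $(\Delta_{X,\infty}\xi,\xi')_{L^2,\infty}$, assertions (3) and (4) follow.

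The main obstacle is really concentrated in Part (1): producing a \emph{uniform} locally integrable dominant for $1/g_u$ to feed the dominated convergence theorem. Here the specific structure of integrable metrics, namely that the potentials are locally bounded and arise from uniform limits of smooth psh functions, is what allows one to choose the approximants $h_{X,u}$ so that $1/g_u$ remains controlled by a fixed $L^1_{\mathrm{loc}}$ function; once this input is secured, the rest of the argument is soft.
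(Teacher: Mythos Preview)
Your approach coincides with the paper's: both exploit the metric-free identity that $\Delta_{X,u}\xi\cdot\omega_{X,u}$ is a fixed multiple of $\partial\bar\partial\xi$, reduce Part~(1) to dominated convergence for $\int_X h_{X,u}^{-1}\bigl|\partial_z\partial_{\bar z}\xi\bigr|^2\,dz\wedge d\bar z$, and obtain (3)--(4) from Stokes' theorem.

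Two remarks. First, you overstate the ``main obstacle'': an integrable metric, being a quotient of uniform limits of smooth positive metrics, is \emph{continuous} and strictly positive on the compact surface $X$, so the density $g_\infty$ is bounded away from zero in each chart. Uniform convergence $g_u\to g_\infty$ then gives $1/g_u\leq C$ for all large $u$, and the dominant is simply the bounded function $C\bigl|\partial_z\partial_{\bar z}\xi\bigr|^2$ --- there is no $L^1_{\mathrm{loc}}$ subtlety. The paper makes this explicit by observing that $h_{X,\infty}/h_{X,u}$ is a global continuous, hence bounded, function on $X$. Second, for (3)--(4) the paper performs the Stokes computation directly with $h_{X,\infty}$: since the integrand $\partial_z\partial_{\bar z}\xi\cdot\overline{\xi'}\,dz\wedge d\bar z$ contains no metric data, the identity $(\Delta_{X,\infty}\xi,\xi')_{L^2,\infty}=\frac{i}{2\pi}\int_X\partial_{\bar z}\xi\,\partial_z\overline{\xi'}\,dz\wedge d\bar z$ holds immediately, which is cleaner than routing through the limit and Part~(1) as you propose.
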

Since  $h_{X,u}$ is smooth, a classical fact, see \cite[p.94]{GH}, asserts that $(I+\Delta_{X,u})^{-1}$  is a compact operator on $\h_0(X)$, for any $u\geq 1$.
 It is natural to ask if $I+\Delta_{X,\infty}$ is invertible in a suitable space. The answer to this question is given in the
 following  theorem
(theorem \eqref{invertibleoperator}):
 \begin{theorem} The operator $\Delta_{X,\infty}$ admits a maximal selfadjoint extension to a space, denoted by $\h_2(X)$. We
 denote this extension
also by $\Delta_{X,\infty}$. The operator $I+\Delta_{X,\infty}$ is invertible, more precisely, we have:
\[
(I+\Delta_{X,\infty})(I+\Delta_{X,\infty})^{-1}=I,
\]
on $\h_0(X)$, where $I$ is the operator identity of $\h_0(X)$.
\[
(I+\Delta_{X,\infty})^{-1}(I+\Delta_{X,\infty})=I,
\]
on $\h_2(X)$, where   $I$ is the operator identity of $\h_2(X)$.
\end{theorem}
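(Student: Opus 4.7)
The plan is to realize the desired extension by a Friedrichs-type construction applied to the nonnegative symmetric operator $\Delta_{X,\infty}:A^{(0,0)}(X)\to \h_0(X)$ whose properties are established in the previous theorem.

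First, introduce on $A^{(0,0)}(X)$ the sesquilinear form
\[
Q(\xi,\xi'):=(\xi,\xi')_{L^2,\infty}+\bigl(\Delta_{X,\infty}\xi,\xi'\bigr)_{L^2,\infty}.
\]
Parts~(3) and~(4) of the previous theorem show that $Q$ is Hermitian and, thanks to the additive $L^2$-term, strictly positive, so $\|\xi\|_1:=Q(\xi,\xi)^{1/2}$ is a norm. Define $\h_1(X)$ as the completion of $A^{(0,0)}(X)$ with respect to $\|\cdot\|_1$. Since $\|\cdot\|_{L^2,\infty}\leq \|\cdot\|_1$, the identity on $A^{(0,0)}(X)$ extends to a continuous embedding $\h_1(X)\hookrightarrow \h_0(X)$, and $Q$ extends to an inner product making $\h_1(X)$ a Hilbert space.

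Next, set
\[
\h_2(X):=\bigl\{\xi\in \h_1(X):\,\xi'\mapsto Q(\xi,\xi')\text{ extends to a bounded conjugate-linear functional on }\h_0(X)\bigr\},
\]
and for $\xi\in \h_2(X)$ define $(I+\Delta_{X,\infty})\xi$ to be the unique $\eta\in \h_0(X)$ representing this functional via Riesz. On smooth $\xi$ this coincides with the original operator, so this is a genuine extension. Bijectivity of $I+\Delta_{X,\infty}:\h_2(X)\to \h_0(X)$ follows from Riesz representation inside $\h_1(X)$: for any $\eta\in \h_0(X)$ the functional $\xi'\mapsto (\eta,\xi')_{L^2,\infty}$ is bounded on $\h_1(X)$, so there is a unique $\xi\in \h_1(X)$ with $Q(\xi,\xi')=(\eta,\xi')_{L^2,\infty}$ for all $\xi'\in \h_1(X)$; this $\xi$ lies in $\h_2(X)$ and $(I+\Delta_{X,\infty})\xi=\eta$. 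The two identities of the statement are then the injectivity (immediate from strict positivity of $Q$) and the surjectivity of this map. Selfadjointness, and hence the maximality of the extension among symmetric ones, follows from the standard von Neumann criterion: a nonnegative symmetric operator $A$ for which $I+A$ is surjective is automatically selfadjoint.

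The delicate issue is the closability of $Q$ needed in the construction of $\h_1(X)$: one must verify that if $(\xi_n)\subset A^{(0,0)}(X)$ is $\|\cdot\|_1$-Cauchy with $\xi_n\to 0$ in $\h_0(X)$, then $\|\xi_n\|_1\to 0$. Theorem~1 is precisely what makes this tractable, since the uniform $L^2$-convergence $\Delta_{X,u}\xi\to \Delta_{X,\infty}\xi$ lets one compare $Q$ with the forms $Q_u$ associated to the smooth approximating metrics, for which closability is classical. Once this is done, the Friedrichs construction runs without further difficulty.
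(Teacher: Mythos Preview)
Your Friedrichs-type construction is a legitimate route to \emph{some} nonnegative selfadjoint extension of $\Delta_{X,\infty}$ with $I+\Delta_{X,\infty}$ bijective, but it is genuinely different from what the paper does, and it leaves a gap relative to the statement as it functions in the paper.

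The paper does \emph{not} run the abstract Friedrichs machine. Instead it first constructs the operator $(I+\Delta_{X,\infty})^{-1}$ concretely as the norm limit of the smooth resolvents $(I+\Delta_{X,u})^{-1}$ (this is the preceding theorem), sets $\h_2(X):=(I+\Delta_{X,\infty})^{-1}\h_0(X)$, and defines the extension by $Q_\infty(\phi):=\psi-\phi$ where $\phi=(I+\Delta_{X,\infty})^{-1}\psi$. Positivity and selfadjointness are obtained by passing to the limit from the corresponding properties of the smooth $Q_u$, using that $(\cdot,\cdot)_{L^2,u}\to(\cdot,\cdot)_{L^2,\infty}$. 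The identity $(I+\Delta_{X,\infty})(I+\Delta_{X,\infty})^{-1}=I$ is then proved by a weak computation against test functions $\xi\in A^{(0,0)}(X)$, again via the approximation $((\Delta_{X,u}+I)^{-1}\psi,(\Delta_{X,u}+I)\xi)_{L^2,u}=(\psi,\xi)_{L^2,u}$.

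The point is that in this paper the symbol $(I+\Delta_{X,\infty})^{-1}$ already has a meaning before the present theorem: it is the limit of the smooth resolvents. The theorem is asserting that this specific compact operator is the honest inverse of the extended $I+\Delta_{X,\infty}$. Your argument produces an inverse from the Friedrichs construction but never checks that it agrees with that limit operator; without this identification the subsequent results (convergence of heat semigroups, of $\theta$-functions, etc.), which all rely on approximating $(I+\Delta_{X,\infty})^{-1}$ and $e^{-t\Delta_{X,\infty}}$ by their smooth counterparts, do not follow from your version of the theorem. To close the gap you would have to show, for instance, that for $\psi\in\h_0(X)$ the element $(I+\Delta_{X,u})^{-1}\psi$ converges in $\h_0(X)$ to your Friedrichs solution $\xi$ of $Q(\xi,\cdot)=(\psi,\cdot)_{L^2,\infty}$ --- which essentially reproduces the paper's argument.

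A smaller remark: the closability of your form $Q$ does not need any comparison with the smooth $Q_u$. It follows directly from the symmetry established in part~(3) of the previous theorem by the standard argument: if $\xi_n\to 0$ in $\h_0(X)$ and $(\xi_n)$ is $\|\cdot\|_1$-Cauchy, then for fixed $m$ one has $Q(\xi_m,\xi_n)=(\xi_n,(I+\Delta_{X,\infty})\xi_m)_{L^2,\infty}\to 0$, and combining with $|Q(\xi_n-\xi_m,\xi_n)|\le\|\xi_n-\xi_m\|_1\|\xi_n\|_1$ gives $\|\xi_n\|_1\to 0$. So the ``delicate issue'' you flag is in fact routine; the genuine work lies in the identification with the limit resolvent, which your proposal omits.
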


The proof of this theorem will be split into two steps, but before   we  need to establish two
 technical results \eqref{bornelapbelt} and \eqref{proposition1}. In the first step, we  prove
 the following theorem
 (theorem
 \eqref{deltacompactoperateur}):
\begin{theorem}
The sequence $\bigl((I+\Delta_{X,u})^{-1} \bigr)_{u\geq 1}$ converges to a compact operator denoted by
$(I+\Delta_{X,\infty})^{-1}:\h_0(X)\lra \h_0(X)$, with respect to $L^2_{X,\infty}$-norm.
\end{theorem}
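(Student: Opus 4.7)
The plan is to establish convergence of $(I+\Delta_{X,u})^{-1}$ to a limit operator in the operator norm of $(\h_0(X),\|\cdot\|_{L^2,\infty})$; since the compact operators form a norm-closed subspace of the bounded operators on a Hilbert space, the limit will then automatically be compact.

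The main tool is the second resolvent identity
\[
(I+\Delta_{X,u})^{-1}-(I+\Delta_{X,v})^{-1} = (I+\Delta_{X,u})^{-1}(\Delta_{X,v}-\Delta_{X,u})(I+\Delta_{X,v})^{-1}.
\]
In a local coordinate $z$, writing $\omega_{X,u}=i\,g_u\,dz\wedge d\z$, one has $\Delta_{X,u}=g_u^{-1}\Delta_0$, with $\Delta_0$ the flat (metric-independent) operator $-\pt^2/\pt z\,\pt\z$, so the middle factor reduces to multiplication by the scalar $(g_u/g_v-1)$ composed with $\Delta_{X,u}$. The two preparatory results enter here crucially: \eqref{bornelapbelt} should yield a uniform-in-$v$ bound on $\|\Delta_{X,u}(I+\Delta_{X,v})^{-1}\|$ in the $L^2_\infty$-operator norm (an elliptic regularity statement stable under the degeneration of the metric), and \eqref{proposition1} should supply a uniform bound on $\|(I+\Delta_{X,u})^{-1}\|$ together with the comparison between the $L^2_{X,u}$- and $L^2_{X,\infty}$-structures, which itself follows from the uniform convergence $h_{X,u}\to h_{X,\infty}$. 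Combining these with $\|g_u/g_v-1\|_{L^\infty}\to 0$ gives
\[
\bigl\|(I+\Delta_{X,u})^{-1}-(I+\Delta_{X,v})^{-1}\bigr\|_{\mathrm{op}}\leq C\,\|g_u/g_v-1\|_{L^\infty}\lra 0,
\]
showing that the sequence is Cauchy in operator norm. Its limit, which I denote $(I+\Delta_{X,\infty})^{-1}$, is therefore compact on $\h_0(X)$.

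The notation is justified \emph{a posteriori} by testing against smooth $\xi\in A^{(0,0)}(X)$. By item (1) of the first theorem of the excerpt, $\Delta_{X,u}\xi\to\Delta_{X,\infty}\xi$ in $L^2_\infty$; passing to the limit in the identity $(I+\Delta_{X,u})^{-1}(I+\Delta_{X,u})\xi=\xi$ then exhibits $(I+\Delta_{X,\infty})^{-1}$ as an inverse of $I+\Delta_{X,\infty}$ on the dense subspace $A^{(0,0)}(X)$, which is all that the statement requires (the maximal selfadjoint extension is handled separately in the next theorem).

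The principal obstacle will be the operator-norm estimate on $\Delta_{X,v}-\Delta_{X,u}$: as an operator this difference is unbounded and second order, while its smallness is merely the pointwise smallness of the scalar factor $g_u/g_v-1$, applied \emph{after} differentiation. Converting this pointwise smallness into an operator-norm bound amounts to absorbing the two derivatives into the adjacent resolvent via an elliptic regularity estimate that remains uniform as $v\to\infty$, even though the density $g_v$ degenerates towards the merely integrable limit $g_\infty$. This uniformity is precisely the content of \eqref{bornelapbelt}, and is what makes the resolvent-identity strategy succeed.
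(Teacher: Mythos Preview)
Your resolvent-identity approach is sound and leads to a correct proof, but it is \emph{not} the paper's argument, and you have misread the content of the two cited propositions.

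\textbf{What the paper actually does.} The paper works with the continuous interpolating family $(h_{X,u})_{u>1}$ and proves in Proposition~\ref{proposition1} the \emph{derivative} bound
\[
\Bigl\|\frac{\partial}{\partial u}(I+\Delta_{X,u})^{-1}\Bigr\|_{L^2,\infty}\le c_3\,|\delta_X(u)|,
\]
where $\delta_X(u)=\sup_X\bigl|\partial_u\log h_{X,u}\bigr|$; this in turn rests on Proposition~\ref{bornelapbelt}, which is the pointwise estimate $\|\partial_u\Delta_{X,u}\,\xi\|_{L^2,u}\le\delta_X(u)\|\Delta_{X,u}\xi\|_{L^2,u}$. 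One then chooses the approximating sequence so that $\delta_X(u)=O(u^{-2})$ and integrates:
\[
\bigl\|(I+\Delta_{X,p})^{-1}-(I+\Delta_{X,q})^{-1}\bigr\|_{L^2,\infty}\le\int_p^q c_3\,\delta_X(u)\,du=O\Bigl(\frac1p-\frac1q\Bigr),
\]
obtaining the Cauchy property. Compactness of the limit is then quoted from the closedness of compact operators.

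\textbf{Your route.} You bypass the continuous interpolation entirely and use the second resolvent identity at two fixed parameters. This works, and is arguably more elementary: writing $\Delta_{X,u}=(g_v/g_u)\Delta_{X,v}$ gives $\Delta_{X,u}(I+\Delta_{X,v})^{-1}=M_{g_v/g_u}\bigl(I-(I+\Delta_{X,v})^{-1}\bigr)$, and then
\[
(I+\Delta_{X,u})^{-1}-(I+\Delta_{X,v})^{-1}=(I+\Delta_{X,u})^{-1}M_{1-g_v/g_u}\bigl(I-(I+\Delta_{X,v})^{-1}\bigr),
\]
whose $L^2_\infty$-operator norm is $\le C^2\|1-g_v/g_u\|_\infty\to0$. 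The ingredients you actually need are (i) $\|(I+\Delta_{X,u})^{-1}\|_{L^2,u}\le 1$ (this is \eqref{normeleq1} in the paper, pure spectral theory), (ii) $\|\Delta_{X,v}(I+\Delta_{X,v})^{-1}\|_{L^2,v}\le 1$ (same reason), and (iii) the uniform equivalence of the $L^2_{X,u}$-norms. None of these is Proposition~\ref{bornelapbelt} or~\ref{proposition1}; those are statements about $\partial_u$-derivatives, not the static bounds you invoke. So your citations are wrong even though your argument is right.

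\textbf{Comparison.} Your method avoids the smooth-in-$u$ family and the $O(u^{-2})$ decay hypothesis on $\delta_X$; it uses only the uniform convergence $h_{X,u}\to h_{X,\infty}$. The paper's method, on the other hand, fits into the broader infrastructure of the article, where the same $\partial_u$-calculus is reused for the heat kernel (Theorem~\ref{variationEu}), the projector $P^u$, and the trace estimates. Finally, the last paragraph of your proposal (on the naming of the limit) anticipates Theorem~\ref{invertibleoperator}; it is correct but not part of the present statement.
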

We  extend the operator $\Delta_{X,\infty}$ in the second step, this is the goal of the
subsection \eqref{Freidrichs},
where we review the notion of selfadjoint extension of Laplacians in the classical setting. Using
theorem \eqref{deltacompactoperateur}, we establish that $\Delta_{X,\infty}$ admits a maximal positive selfadjoint extension. \\

Next, we prove that $\Delta_{X,\infty}$ has a infinite discrete and positive spectrum, and by an operator theory argument it admits
a heat
kernel $e^{-t\Delta_{X,\infty}}$ for any $t>0$. The following  theorem
(see \eqref{variationEu}) shows that $e^{-t\Delta_{X,\infty}}$ is a limit of a sequence $\bigl(e^{-t\Delta_{{}_{X,u}}}\bigr)_u$
of heat kernels attached to $(h_{X,u})_{u\geq 1}$, a sequence of smooth metrics on $X$.
 \begin{theorem}
 For any $t>0$, we have:
\[
 \bigl(e^{-t\Delta_{{}_{X,u}}}\bigr)_u\xrightarrow[u\to+\infty]{}e^{-t\Delta_{X,\infty}},
\]
In particular, $e^{-t\Delta_{{}_{X,\infty}}}$ is a compact operator from $\h_0(X)$ to $\h_0(X)$.
\end{theorem}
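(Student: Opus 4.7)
The plan is to reduce the convergence of the heat semigroups to the norm convergence of resolvents established in Theorem \eqref{deltacompactoperateur} by means of the continuous functional calculus for bounded self-adjoint operators. Set $R_u:=(I+\Delta_{X,u})^{-1}$ for $u\geq 1$ and $R_\infty:=(I+\Delta_{X,\infty})^{-1}$. By the previous theorems each of these is a positive, self-adjoint, compact operator on $\h_0(X)$ with spectrum contained in $[0,1]$, and $R_u\to R_\infty$ in operator norm. The key observation is that the heat operator can be expressed as a continuous function of the resolvent: define $g_t\colon[0,1]\to\R$ by $g_t(y)=e^{t}e^{-t/y}$ for $y>0$ and $g_t(0)=0$. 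Since $e^{-t/y}\to 0$ as $y\to 0^+$, $g_t$ is continuous on $[0,1]$ and vanishes at the origin.

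Via the spectral theorem applied to $\Delta_{X,u}$ and to the maximal self-adjoint extension $\Delta_{X,\infty}$ of Theorem \eqref{invertibleoperator}, the spectral measure of $R_\bullet$ is the pushforward of that of $\Delta_{X,\bullet}$ under $\lambda\mapsto (1+\lambda)^{-1}$, and the identity $g_t\bigl((1+\lambda)^{-1}\bigr)=e^{-t\lambda}$ yields
\[
e^{-t\Delta_{X,u}}=g_t(R_u),\qquad e^{-t\Delta_{X,\infty}}=g_t(R_\infty).
\]
I would then approximate $g_t$ uniformly on $[0,1]$ by polynomials $(p_n)$ with $p_n(0)=0$ (possible by Weierstrass after subtracting a constant). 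For every bounded self-adjoint $T$ with $\sigma(T)\subset[0,1]$, the continuous functional calculus gives $\|g_t(T)-p_n(T)\|\leq \sup_{y\in[0,1]}|g_t(y)-p_n(y)|$, a bound uniform in $T$. For each fixed $n$, norm continuity of polynomial calculus together with $R_u\to R_\infty$ in norm yields $p_n(R_u)\to p_n(R_\infty)$. A standard three-epsilon argument combining these two estimates delivers $g_t(R_u)\to g_t(R_\infty)$ in operator norm, which is the claimed convergence of heat operators.

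The compactness of $e^{-t\Delta_{X,\infty}}$ follows immediately from the same approximation: since $p_n(0)=0$, one can factor $p_n(Y)=Y\cdot q_n(Y)$ for some polynomial $q_n$, so $p_n(R_\infty)=R_\infty\, q_n(R_\infty)$ is compact as the product of the compact operator $R_\infty$ with a bounded operator; hence $g_t(R_\infty)=e^{-t\Delta_{X,\infty}}$ is compact as a norm limit of compact operators. The main obstacle is verifying the functional-calculus identity $e^{-t\Delta_{X,\infty}}=g_t(R_\infty)$ for the extension constructed in Theorem \eqref{invertibleoperator}: this requires the spectral theorem for the maximal positive self-adjoint extension and a careful check that $g_t$, although only continuous (not smooth) on the full interval $[0,1]$, lies in the $C^0$-functional calculus class in which both sides make sense. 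Once this is in place the argument above is routine.
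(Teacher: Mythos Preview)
Your approach is correct and genuinely different from the paper's. The paper first proves, via Duhamel's formula and several integrations by parts, the quantitative estimate $\bigl\|\tfrac{\partial}{\partial u}e^{-t\Delta_{X,u}}\bigr\|_{L^2,\infty}=O(\delta_X(u))$ (this is the proposition immediately preceding the theorem), which shows that $(e^{-t\Delta_{X,u}})_u$ is Cauchy and hence converges to some compact limit $L_t$; the identification $L_t=e^{-t\Delta_{X,\infty}}$ is then obtained by verifying directly that $L_t$ solves $(\partial_t+\Delta_{X,\infty})L_t=0$ with $L_t\to I$ as $t\to 0$, and invoking uniqueness of the heat semigroup. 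Your route through the continuous functional calculus of the resolvent is cleaner and more conceptual for this theorem taken in isolation; the observation that $g_t(y)=e^{t-t/y}$ is continuous on $[0,1]$ with $g_t(0)=0$ is exactly right, and the identity $e^{-t\Delta_{X,\infty}}=g_t(R_\infty)$ is not really an obstacle once one knows $\Delta_{X,\infty}$ is self-adjoint with discrete spectrum, as the paper has already shown. One point you should make explicit: $R_u$ is self-adjoint with respect to $(\cdot,\cdot)_{L^2,u}$, not $(\cdot,\cdot)_{L^2,\infty}$, so the functional-calculus bound $\|g_t(R_u)-p_n(R_u)\|\leq\|g_t-p_n\|_{C^0[0,1]}$ holds a priori only in the $u$-operator-norm; the uniform equivalence of the norms (Lemma 3.1) is needed to transfer this to the $\infty$-norm with a constant independent of $u$, which is what the three-epsilon step requires. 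Finally, note that the Duhamel estimate you bypass is invoked again in the proof of Theorem \ref{key2} (the trace-norm estimates for $\theta_{X,u}$), so your shortcut does not make that proposition dispensable for the paper as a whole.
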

In order to prove this theorem, we show first that the sequence $\bigl(e^{-t\Delta_{{}_{X,u}}}\bigr)_u$ converges to
a limit, for any $t>0$. Then, we conclude using the uniqueness of the heat kernel.\\

In subsection \eqref{tracezetainfty}, we study the spectral properties of $\Delta_{X,\infty}$. We introduce $\theta_{X,\infty}$,
the associated
Theta function
  and we prove, as in the classical theory, that $\theta_{X,\infty}(t)$ is finite for any $t>0$, that is the
theorem  \eqref{thetafinite}. The  theorem  \eqref{key2} is the core of the article: If we denote by $(\la_{\infty,k})_{k\in \N}$
the sequence of the eigenvalues of $\Delta_{X,\infty}$  counted with their multiplicities,
and ordered in increasing order, we have:
\begin{theorem}
We have, for any $t>0$ fixed:
\[
\bigl(\theta_{X,u}(t)\bigr)_{u\geq 1}\xrightarrow[u\mapsto \infty]{} \theta_{X,\infty}(t),
\]
and
\[
\zeta_{X,\infty}(s):=\frac{1}{\Gamma(s)}\int_0^\infty t^{s-1}\theta_{X,\infty}(t)dt=\sum_{k=1}^\infty \frac{1}{\la_{\infty,k}^s},
\]
is finite for any $s\in \C$, such that $\mathrm{Re}(s)>1$. This
function of $s$ admits a meromorphic continuation to the
whole complex plane with a pole at $s=1$, and it is holomorphic at $s=0$. We have,
\[
\zeta'_{X,\infty}(0)=\int_1^\infty \frac{\theta_{X,\infty}(t)}{t}dt+\gamma b_{\infty,-1}-b_{\infty,0}+\int_0^1 \frac{\rho_{X,\infty}(t)}{t}dt=\lim_{u\mapsto \infty} \bigl(\zeta'_{X,u}(0)\bigr)_{u\geq 1},
\]
where $b_{\infty,-1}$, $b_{\infty,0}$ are real numbers and $\rho_{X,\infty}$ is a real function  such that $\theta_{X,\infty}(t)=\frac{b_{\infty,-1}}{t}+b_{\infty,0}
+ \rho_{X,\infty}(t)$, and  $\rho_{X,\infty}(t)=O(t)$ for $t>0$ sufficiently small.
\end{theorem}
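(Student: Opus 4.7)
The plan is to piggy-back on the smooth case via the approximating sequence $(h_{X,u})_{u\geq 1}$, using the preceding theorems on convergence of heat operators $e^{-t\Delta_{X,u}}\to e^{-t\Delta_{X,\infty}}$ and the finiteness of $\theta_{X,\infty}(t)$. I would start by proving the pointwise convergence $\theta_{X,u}(t)\to\theta_{X,\infty}(t)$ for fixed $t>0$. By the spectral theorem, $\theta_{X,u}(t)=\sum_{k\geq 1}e^{-t\la_{u,k}}$ and similarly for $\infty$. The convergence $e^{-t\Delta_{X,u}}\to e^{-t\Delta_{X,\infty}}$ established in theorem \eqref{variationEu}, combined with compactness of these operators, yields convergence of eigenvalues $\la_{u,k}\to\la_{\infty,k}$ for each $k$ (by the min-max principle and norm convergence of resolvents). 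A uniform tail bound, derived from theorem \eqref{thetafinite}, then allows termwise passage to the limit via dominated convergence.

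Next I turn to the meromorphic continuation of $\zeta_{X,\infty}$. I split the Mellin integral as $\int_0^\infty t^{s-1}\theta_{X,\infty}(t)dt=\int_0^1+\int_1^\infty$. The tail $\int_1^\infty t^{s-1}\theta_{X,\infty}(t)dt$ is entire in $s$ because $\theta_{X,\infty}(t)\leq Ce^{-\la_{\infty,1}t}$ for large $t$, where $\la_{\infty,1}>0$ is the smallest positive eigenvalue. For the short-time part, the key step is to establish the asymptotic
\[
\theta_{X,\infty}(t)=\frac{b_{\infty,-1}}{t}+b_{\infty,0}+\rho_{X,\infty}(t),\quad \rho_{X,\infty}(t)=O(t)\;\text{as }t\to 0^+.
\]
In the singular setting this is the hardest step: one cannot invoke Minakshisundaram--Pleijel directly since $h_{X,\infty}$ is only integrable. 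The strategy is to take $u\to\infty$ in the smooth-case expansion, showing that the coefficients $b_{u,-1},b_{u,0}$ (which are expressible as curvature integrals in the smooth case) converge to well-defined limits $b_{\infty,-1},b_{\infty,0}$ using the integrability hypothesis, and to obtain a uniform $O(t)$ bound on the remainders $\rho_{X,u}$. Once this asymptotic is in hand, inserting it into the Mellin integral shows that $\zeta_{X,\infty}$ is the sum of $b_{\infty,-1}/((s-1)\Gamma(s))$ (giving the simple pole at $s=1$), $b_{\infty,0}/\Gamma(s+1)$ (entire), and two Mellin integrals that are holomorphic on $\mathrm{Re}(s)>-1$. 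Since $1/\Gamma(s)$ vanishes at $s=0$, each piece is holomorphic at the origin.

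The formula for $\zeta'_{X,\infty}(0)$ is then a mechanical Laurent expansion around $s=0$: from $1/\Gamma(s)=s+\gamma s^2+O(s^3)$ one reads off the contribution $\gamma b_{\infty,-1}$ from the pole term, $-b_{\infty,0}$ from the $\Gamma(s+1)^{-1}$ term, and $\int_1^\infty \theta_{X,\infty}(t)t^{-1}dt+\int_0^1\rho_{X,\infty}(t)t^{-1}dt$ from the two Mellin integrals. Finally, for the convergence $\zeta'_{X,u}(0)\to\zeta'_{X,\infty}(0)$, I would apply the same decomposition to every $\zeta_{X,u}$ and pass to the limit piece by piece: the tail by dominated convergence using the uniform exponential decay $\theta_{X,u}(t)\leq Ce^{-\la_{\infty,1}t/2}$ for $u$ large (consequence of eigenvalue convergence), the constants by the convergence $b_{u,\bullet}\to b_{\infty,\bullet}$, and the short-time remainder integral via the uniform $O(t)$-control of $\rho_{X,u}$.

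The main obstacle is the uniform small-time heat-trace asymptotic for the family $\Delta_{X,u}$, i.e.\ obtaining bounds on $b_{u,-1}$, $b_{u,0}$, and on $\rho_{X,u}(t)/t$ that depend only on quantities controlled by the uniform convergence $h_{X,u}\to h_{X,\infty}$. This is where the integrability assumption and the technical estimates \eqref{bornelapbelt} and \eqref{proposition1} must do the heavy lifting; everything else is a packaging of Mellin-transform identities and a standard dominated-convergence argument.
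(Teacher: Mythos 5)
Your overall architecture --- approximate by the smooth family, split the Mellin transform at $t=1$, extract the pole and the value at $0$ from the Laurent data of the short-time expansion, and pass every piece to the limit in $u$ --- is the same as the paper's. But the proposal has a genuine gap precisely at the point you yourself flag as ``the main obstacle'': the existence of the limits $b_{\infty,-1},b_{\infty,0}$ and, above all, the \emph{uniform in $u$} bound $|\rho_{X,u}(t)|\leq Ct$ are asserted as a plan, not proved, and the estimates \eqref{bornelapbelt} and \eqref{proposition1} do not by themselves deliver them. The standard Minakshisundaram--Pleijel remainder bounds carry constants depending on derivatives of the metric, and these blow up as $u\to\infty$ because $h_{X,\infty}$ is merely integrable; so ``take $u\to\infty$ in the smooth-case expansion'' is exactly the step that needs a new idea. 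The paper supplies two: (i) the anomaly formula shows $a_{u,0}=\zeta_{X,u}(0)=\tfrac16\int_X c_1(TX)+2$ is a \emph{topological} constant independent of $u$ (proposition \eqref{a_uuu}), so no curvature integral of a degenerating metric ever has to be controlled, while $a_{u,-1}=4\pi\,\mathrm{vol}_u(X)$ converges trivially; (ii) the uniform $O(t)$ bound on $\rho_{X,u}$ is obtained not from local heat-kernel analysis but from a contour-integral representation $\widetilde\theta_u(x)=\frac{1}{2\pi i}\int_{c-i\infty}^{c+i\infty}x^{-s}\Gamma(s)\zeta_{X,u}(s)\,ds$, Cauchy estimates on the differences $|a_{u,k}-a_{u',k}|$ for all $k\geq 1$, and a Gronwall argument giving uniform bounds on $\zeta_{X,u}(s)$ on vertical strips $\mathrm{Re}(s)>1$ (which itself rests on the auxiliary functions $\theta_{u,\infty}(t)=\|P^ue^{-t\Delta_{X,u}}\|_{1,\infty}$ and a uniform first-eigenvalue lower bound coming from the Cheeger constant, \eqref{lowerbound}). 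Without some substitute for (i) and (ii), your decomposition of $\zeta_{X,\infty}$ and the piecewise limit of $\zeta_{X,u}'(0)$ cannot be completed.

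A secondary, smaller issue: for the pointwise convergence $\theta_{X,u}(t)\to\theta_{X,\infty}(t)$ you invoke eigenvalue convergence from \eqref{variationEu} plus ``a uniform tail bound derived from theorem \eqref{thetafinite}.'' Theorem \eqref{thetafinite} only gives the Fatou-type inequality $\theta_{X,\infty}(t)\leq\liminf_u\theta_{X,u}(t)$ and boundedness of $(\theta_{X,u}(t))_u$; it does not hand you a uniform bound on $\sum_{k\geq N}e^{-t\la_{u,k}}$, which is what termwise dominated convergence of the infinite sum requires. One can manufacture such a bound (e.g.\ $\sum_{k\geq N}e^{-t\la_{u,k}}\leq e^{-t\la_{u,N}/2}\,\theta_{X,u}(t/2)$ together with convergence of $\la_{u,N}$), but you should say so; the paper instead avoids eigenvalue convergence altogether and deduces $\theta_{X,u}(t)\to\theta_{X,\infty}(t)$ from the trace-norm comparisons \eqref{thetainfty}, \eqref{thetainfty11} and \eqref{thetainfty12}.
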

The proof of this theorem relies on some technical lemmas, and on a critical result which gives a uniform lower bound for the
first nonzero eigenvalue of $\Delta_{X,u}$ for any $u\geq 1$.\\

We provide in theorem \eqref{convergenceAnomaly} a new approach to extend the Quillen metrics to integrable metrics on compact
riemannian surface; we prove the following result:
\begin{theorem}
We keep the same assumptions. For any $p\in \N$, let $ h_{Q,((X,\omega_{X,p});(\mathcal{O},h_{\mathcal{O}})
)}$ be the Quillen metric associated to $\bigl((X,\omega_{X,p});(\mathcal{O},h_{\mathcal{O}}) \bigr)$. We have, the sequence $\Bigl(
h_{Q,((X,\omega_{X,p});(\mathcal{O},h_{\mathcal{O}}) )}\Bigr)_{p\in \N}$ converges to a limit, which does not depend on the choice of
$\bigl(h_{X,p}\bigr)_{p\in \N}$.  We denote this limit by $h_{Q,((X,\omega_{X,\infty});(\mathcal{O},h_{\mathcal{O}}) )}$.
\end{theorem}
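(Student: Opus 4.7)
The plan is to reduce the claim to the two constituents of the Quillen metric and handle them separately. Recall that for a smooth metric $h_{X,p}$, the Quillen metric on the determinant line $\lambda(\mathcal{O}) = \det H^0(X,\mathcal{O})\otimes \det H^1(X,\mathcal{O})^{-1}$ is, by definition,
\[
h_{Q,((X,\omega_{X,p});(\mathcal{O},h_{\mathcal{O}}))} \;=\; h_{L^2,((X,\omega_{X,p});(\mathcal{O},h_{\mathcal{O}}))} \cdot \exp\bigl(-T((X,\omega_{X,p}),(\mathcal{O},h_{\mathcal{O}}))\bigr),
\]
so it suffices to prove separately that each factor converges as $p\to\infty$, and that both limits are independent of the approximating sequence $(h_{X,p})_{p\in\N}$.

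First I would treat the analytic torsion factor. Since $T((X,\omega_{X,p}),(\mathcal{O},h_{\mathcal{O}})) = \zeta'_{X,p}(0)$, the earlier theorem (the one computing $\zeta'_{X,\infty}(0)$ as the limit of $\zeta'_{X,u}(0)$) applied to any admissible smoothing $(h_{X,p})_{p\in\N}$ yields
\[
T((X,\omega_{X,p}),(\mathcal{O},h_{\mathcal{O}}))\;\xrightarrow[p\to\infty]{}\; \zeta'_{X,\infty}(0),
\]
and the limit depends only on $h_{X,\infty}$, not on the sequence used to reach it. This gives both the convergence and the independence of the torsion factor.

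Next I would handle the $L^2$ factor. Since $\mathcal{O}$ is trivial, $H^0(X,\mathcal{O})=\C$ and, by Serre duality, $H^1(X,\mathcal{O})^\vee$ is naturally identified with the space of holomorphic $1$-forms; the $L^2$ metrics on these two spaces are computed by integration of smooth sections against the volume form $\omega_{X,p}$. For the $H^0$ part the norm of the constant section $1$ is simply $\mathrm{vol}(X,\omega_{X,p})^{1/2}$, and for the $H^1$ part one pairs holomorphic $1$-forms using the pointwise norm induced by $h_{X,p}^{-1}$. Here I would invoke the definition of integrable metric: $h_{X,\infty}$ is a uniform limit of smooth metrics with controlled curvatures, so $\omega_{X,p}\to \omega_{X,\infty}$ as measures (indeed pointwise norms of $1$-forms converge) and the $L^2$-norms converge. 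Uniqueness of the limit then follows because any two admissible smoothings $(h_{X,p})$ and $(h'_{X,p})$ produce volume measures converging to the same $\omega_{X,\infty}$, hence yield the same limiting $L^2$ metric on $\lambda(\mathcal{O})$.

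The main obstacle is the independence of the limit from the approximating sequence, particularly on the torsion side: a priori one only controls $\zeta'_{X,p}(0)$ for one chosen smoothing. To handle this cleanly I would argue via a diagonal/interleaving trick: given two smoothings $(h_{X,p}), (h'_{X,p})\to h_{X,\infty}$, the interleaved sequence is again an admissible smoothing converging to $h_{X,\infty}$, so the previous theorem forces a common limit $\zeta'_{X,\infty}(0)$. Combining the two factors then gives that $h_{Q,((X,\omega_{X,p});(\mathcal{O},h_{\mathcal{O}}))}$ converges to a metric on $\lambda(\mathcal{O})$ that depends only on the pair $\bigl((X,\omega_{X,\infty});(\mathcal{O},h_{\mathcal{O}})\bigr)$, which we denote $h_{Q,((X,\omega_{X,\infty});(\mathcal{O},h_{\mathcal{O}}))}$, completing the proof.
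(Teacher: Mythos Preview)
Your argument is correct, but it takes a genuinely different route from the paper. The paper does \emph{not} decompose the Quillen metric into its $L^2$ and torsion factors here; instead it invokes the Bismut--Gillet--Soul\'e anomaly formula directly: for any $p,q$,
\[
-\log h_{Q,p}+\log h_{Q,q}=\int_X ch(\mathcal{O},h_{\mathcal{O}})\,\widetilde{\mathrm{Td}}(TX,h_{X,p},h_{X,q}),
\]
computes the Bott--Chern secondary class explicitly on a Riemann surface, and bounds the right-hand side by a constant times $\bigl\|\log(h_{X,p}/h_{X,q})\bigr\|_{\sup}$. Since the approximating metrics are quotients of smooth positive metrics converging uniformly, the curvature integrals are uniformly controlled and this sup-norm tends to zero, so $(\log h_{Q,p})_p$ is Cauchy. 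Independence from the choice of sequence is immediate from the same bound.

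The conceptual difference matters in the architecture of the paper. The paper deliberately proves this theorem \emph{without} appealing to the spectral result $\zeta'_{X,p}(0)\to\zeta'_{X,\infty}(0)$, precisely so that it furnishes a \emph{second}, independent construction of the limiting Quillen metric; the following theorem then compares the two constructions and shows they agree. Your proof, by contrast, already consumes the spectral convergence theorem, so it effectively merges this theorem with the next one and obliterates the ``two approaches agree'' storyline. What your route buys is that it avoids the anomaly formula and the Bott--Chern calculus entirely; what the paper's route buys is logical independence from the heavy analysis of $\zeta'_{X,\infty}(0)$ and a clean separation of methods. A minor point: in your $H^1$ discussion, the paper observes (via the local formula for $\ast_1$) that the induced $L^2$-norm on $\ker\Delta^1_{\mathcal{O}}\simeq H^0(X,K_X)$ is actually \emph{independent} of the metric on $TX$, so that piece is constant along the sequence rather than merely convergent.
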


We finish this section, by   comparing  both  methods. This is done in theorem \eqref{compare2methods}, it is stated as follows :
\begin{theorem}
We have,

\[
h_{Q,((X,\omega_{X,\infty});(\mathcal{O},h_{\mathcal{O}}) )}=h_{L^2,((X,\omega_{X,\infty});(\mathcal{O},h_{\mathcal{O}}) )}\exp\bigl(\zeta_{X,\infty}'(0)\bigr).
\]
\end{theorem}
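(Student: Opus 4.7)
The plan is to prove the identity by passing to the limit $p\to\infty$ in the classical Ray--Singer relation, which is available at each smooth level of the chosen approximation. For every $p\in\N$, since $h_{X,p}$ is smooth, the Quillen metric is defined by the usual formula
\[
h_{Q,((X,\omega_{X,p});(\mathcal{O},h_{\mathcal{O}}))}=h_{L^2,((X,\omega_{X,p});(\mathcal{O},h_{\mathcal{O}}))}\exp\bigl(\zeta_{X,p}'(0)\bigr).
\]
It then suffices to verify that each of the three factors above converges, as $p\to\infty$, to the corresponding factor in the claimed identity, and to pass to the limit in the equality.

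Convergence of the left-hand side is exactly the preceding theorem: $h_{Q,p}\to h_{Q,\infty}$, and the limit is independent of the approximating sequence $(h_{X,p})_{p\in\N}$. Convergence of the scalar factors $\exp\bigl(\zeta_{X,p}'(0)\bigr)\to\exp\bigl(\zeta_{X,\infty}'(0)\bigr)$ follows from the core theorem on $\zeta_{X,\infty}$ stated above, which gives $\zeta_{X,p}'(0)\to\zeta_{X,\infty}'(0)$, combined with continuity of $\exp$.

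The remaining step is the convergence of the $L^2$ metrics $h_{L^2,p}\to h_{L^2,\infty}$. On the trivial bundle $(\mathcal{O},h_{\mathcal{O}})$, this metric on $\det H^\bullet(X,\mathcal{O})$ is built from the Hermitian pairings on $H^0(X,\mathcal{O})\cong\C$ and $H^1(X,\mathcal{O})$, both induced by integration against the volume form $\omega_{X,p}$. These cohomology spaces are finite-dimensional and $p$-independent, so what is needed is simply the convergence of integrals $\int_X \phi\,\omega_{X,p}\to\int_X\phi\,\omega_{X,\infty}$ for a fixed basis of smooth representatives $\phi$. This should follow from the uniform convergence $h_{X,p}\to h_{X,\infty}$ provided by the first theorem of the paper, which translates into weak convergence of the associated curvature currents.

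The main obstacle I anticipate is the rigorous justification of this last step in the singular setting: $\omega_{X,\infty}$ need not be an absolutely continuous measure, so some care is required to check that the finite-dimensional $L^2$ pairings depend continuously on integrable metrics under the chosen mode of approximation. Once this continuity is in place, the three convergences combine and taking the limit $p\to\infty$ in the Ray--Singer formula at level $p$ yields the comparison identity.
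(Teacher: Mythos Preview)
Your overall strategy---pass to the limit in the smooth Ray--Singer identity, using the previously established convergences of $h_{Q,p}$ and of $\zeta_{X,p}'(0)$---is exactly the paper's. The only point that needs work is the convergence of the $L^2$ metric, and here your description is imprecise in a way that creates a spurious obstacle.

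You write that the pairings on $H^0$ and $H^1$ are ``both induced by integration against the volume form $\omega_{X,p}$'' and then worry that $\omega_{X,\infty}$ may be singular. For $H^0(X,\mathcal{O})\cong\C$ this is fine and the convergence is immediate. For $H^1(X,\mathcal{O})$, however, the $L^2$ metric is computed on \emph{harmonic} $(0,1)$-forms, and two things happen on a Riemann surface that make your anticipated difficulty disappear. First, the Hodge star $\ast_1\colon A^{(0,1)}(X)\to A^{(1,0)}(X)$ is given locally by $\ast_1(g\,d\bar z)=-\bar g\,dz$ and does not involve the metric at all; since $\ker\Delta^1_{\mathcal{O}}=\ast_1^{-1}\bigl(H^0(X,K_X)\bigr)$, the space of harmonic $(0,1)$-forms is the same for every $h_{X,p}$. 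Second, the $L^2$ pairing of two $(0,1)$-forms $g\,d\bar z$ and $f\,d\bar z$ equals $\frac{i}{2\pi}\int_X \bar g f\,dz\wedge d\bar z$, because the $h_{X,p}^{-1}$ in the pointwise inner product of $(0,1)$-forms cancels the $h_{X,p}$ in $\omega_{X,p}$. Hence $\mathrm{Vol}_{L^2,p}\bigl(H^1(X,\mathcal{O})\bigr)$ is literally independent of $p$, and no weak-convergence argument is needed. Once you make this observation, the proof closes exactly as you outlined.
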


We recall and review in section \eqref{rappelclassic} some classical notions used through this article. In section
\eqref{Quelqueslemmes}, we prove some technical results. \\

\noindent\textbf{Acknowledgments}: This work is a part of the author's
Ph.D thesis \cite{these} under the supervision of Vincent Maillot. I thank him for his suggestions and encouragement throughout
my work on this project. I  thank Gerard Freixas for the many hours of mathematical discussions on the topic of this work, and
also Dennis Eriksson.

\section{The Laplacian on compact Riemann Surface }\label{rappelLAPCLA}
Let us recall the  construction of the generalized   Laplacian $\Delta$ acting on $A^{(0,0)}(X)$. We will emphasizes that this construction does not require the smoothness of $h_X$, we can assume
 that $h_X$  is only  continuous.

Let $h_X$ be  a continuous hermitian metric on $TX$, and  $h_{\mathcal{O}}$ a constant metric on $\mathcal{O}$. We denote   by $\omega_X$ the normalized Kähler form associated to $h_X$, given on any local chart of $X$ as follows:
\[
 \omega_X=\frac{i}{2\pi}h_X\Bigl(\frac{\partial}{\partial z},\frac{\partial}{\partial z}\Bigr)dz\wedge d\z.
\]

This metric induces a  metric on the space of differential forms of type  $(0,1)$. Tensoring by $h_{\mathcal{O}}$, the metric of $\mathcal{O}$, we obtain
a  pointwise inner  product at  any $x\in X$: $(s(x),t(x))$ for two section  of $A^{0,q}(X)=A^{0,q}(X)\otimes _{\cl(X)}A^0(X)$,
and $q=0 $ or $1$. The $L^2$ inner product of two sections $s,t\in A^{0,q}(X)$ is given by the formula:
{{}
\[
 (s,t)_{L^2}=\int_X \bigl(s(x),t(x)\bigr)\omega_X.
\]}

The  Cauchy-Riemann operator $\overline{\partial}_{\mathcal{O}}$ acts on  the  forms of type $(0,q)$ with values in  $\mathcal{O}$. We have the  Dolbeault
complex:
{{}
\[
0\lra A^{0,0}(X)\overset{\overline{\partial}_{\mathcal{O}}}{\longrightarrow}A^{0,1}(X)\lra 0 
\]
}
Its cohomology is known to be  the  sheaf cohomology of  $X$ with  coefficients in $\mathcal{O}$, cf. for example \cite{GH}.\\

The operator $\overline{\partial}_{\mathcal{O}}$ admit a formal adjoint for the inner  product $L^2$; namely an application
{{}
\[
 \overline{\partial}_{\mathcal{O}}^\ast:A^{0,1}(X)\lra A^{0,0}(X)
\]}
which verify
\[
 \bigl(s,\overline{\partial}_{\mathcal{O}}^\ast t\bigr)_{L^2}=\bigl(\overline{\partial}_{\mathcal{O}}s,t\bigr)_{L^2}.
\]
for any $s\in A^{0,q}(X)$ et $t\in A^{0,q+1}(X)$.
It follows from the definition that the operator $\overline{\partial}_{\mathcal{O}}^\ast$ is given by the
formula:

\[
\overline{\partial}_{\mathcal{O}}^\ast=-\ast_{0}^{-1}\overline{\partial}_{K_X\otimes \mathcal{O}^\ast}\ast_{1},
\]
see for instance \cite[§.5]{Voisin}, where $\ast_0$ and $\ast_1$ are the following applications:
\[
 \ast_{0}:A^{0,0}(X)\lra A^{1,1}(X,\mathcal{O}^\ast),\]
and,
\[
\ast_{1}: A^{0,1}(X)\lra A^{1,0}(X,\mathcal{O}^\ast).\\
\]
They are the unique  applications which satisfy the following:
\[
 f(x)\wedge \ast_{0} (g(x))=f(x)\overline{g(x)}\omega_x,
\]
and
\[
\bigl(f d\overline{z}\bigr) \wedge\ast_{1}(gd\overline{z}) =\bigl(f\,d\overline{z}(x),g\,d\overline{z}(x)\bigr)_x \omega_X(x),
\]
for any $x\in X$  such that $f,g\in A^{0,0}(X)$. Notice that in order to  define $\ast_{0}$ and $\ast_{1}$, we do not need that
$h_X$ to be smooth. We can show easily that these morphisms can be written   respectively on a local chart, as follows:

\[
 \ast_{0} (g)=\overline{g}\,\omega_X.
\]
and,
\begin{equation}\label{starhodge}
\ast_{1}(gd\overline{z})=-\overline{g} dz.
\end{equation}

We denote by $\Delta_{\overline{\mathcal{O}}}^0$, or    by $\Delta_{\overline{\mathcal{O}}}$, or simply by $\Delta$  the operator $\overline{\partial}_{\mathcal{O}}^\ast
\overline{\partial}_{\mathcal{O}}$ on $A^{0,0}(X)$.  Following  \cite[Definition 8.1, p.101]{Soulé}, we call it the generalized  Laplacian
associated to $h_X$ and $h_{\mathcal{O}}$.

\begin{remarque}
\rm{Even $h_X$ is not smooth, the operator $\Delta_{\overline{\mathcal{O}}}=\overline{\partial}_{\mathcal{O}}^\ast
\overline{\partial}_{\mathcal{O}}$ is well defined.}
\end{remarque}
\begin{lemma}
Let $(X,h_X)$ be a compact Riemann surface such that  $h_X$ is continuous, and $(\mathcal{O},h_{\mathcal{O}})$ the trivial line bunlde equipped with a constant metric such that $h_{\mathcal{O}}$ is
smooth.
The Laplacian  $\Delta $ associated to $h_X$ and $h_{\mathcal{O}}$, is given locally as follows:
\begin{equation}\label{explap11}
 \Delta(f)
=-h_X\Bigl(\frac{\partial}{\partial z},\frac{\partial}{\partial z}\Bigr)^{-1}\frac{\partial^2 f}{\partial z\pt \z},
\end{equation}
for any $ f\in A^{0,0}(X)$,
where $\{\dif\}$ is a local holomorphic basis of $TX$.


\end{lemma}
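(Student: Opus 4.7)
The computation is purely local, so I would fix a holomorphic chart with coordinate $z$ and abbreviate $a := h_X(\partial/\partial z,\partial/\partial z)$, a positive continuous function. Since $(\mathcal{O}, h_{\mathcal{O}})$ is trivial with constant metric, it suffices to compute $\Delta f = \overline{\partial}^\ast \overline{\partial} f$ starting from $\overline{\partial} f = (\partial f/\partial \overline{z})\, d\overline{z}$, and in particular to identify $\overline{\partial}^\ast$ on $(0,1)$-forms $g\, d\overline{z}$ in this chart.

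The cleanest route is to compute $\overline{\partial}^\ast(g\, d\overline{z})$ directly from the defining adjoint identity $(\overline{\partial} s, g\, d\overline{z})_{L^2} = (s, \overline{\partial}^\ast(g\, d\overline{z}))_{L^2}$ and then specialize to $g = \partial f/\partial \overline{z}$. The pointwise metric on $(0,1)$-forms induced by $h_X$ is $(d\overline{z}, d\overline{z})_x = a(x)^{-1}$, so
\[
(\overline{\partial} s, g\, d\overline{z})_{L^2} = \int_X \frac{\partial s}{\partial \overline{z}}\, \overline{g}\, a^{-1}\, \omega_X = \frac{i}{2\pi}\int_X \frac{\partial s}{\partial \overline{z}}\, \overline{g}\, dz \wedge d\overline{z},
\]
the crucial point being that the factor $a^{-1}$ from the metric on $d\overline{z}$ cancels against the factor $a$ in $\omega_X = (i/2\pi)\, a\, dz \wedge d\overline{z}$. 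Stokes' theorem applied to the $1$-form $s\, \overline{g}\, dz$ on compact $X$ (without boundary) yields the usual integration by parts, and comparing with $(s, \overline{\partial}^\ast(g\, d\overline{z}))_{L^2} = \int_X s\, \overline{\overline{\partial}^\ast(g\, d\overline{z})}\, \omega_X$ gives $\overline{\partial}^\ast(g\, d\overline{z}) = -a^{-1}\, \partial g/\partial z$. Applying this formula with $g = \partial f/\partial \overline{z}$ produces $\Delta f = -a^{-1}\, \partial^2 f/(\partial z\, \partial \overline{z})$, which is the stated formula.

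The one conceptual point worth underlining is why continuity of $h_X$ suffices even though an integration by parts is performed: the cancellation highlighted above removes $a$ from the integrand \emph{before} any derivative is taken, so no derivative of $h_X$ ever enters. One could equivalently run the same computation through $\overline{\partial}^\ast = -\ast_{0}^{-1}\, \overline{\partial}\, \ast_{1}$, tracking $\ast_{1}(g\, d\overline{z}) = -\overline{g}\, dz$, then $\overline{\partial}(-\overline{g}\, dz) = (\partial \overline{g}/\partial \overline{z})\, dz \wedge d\overline{z}$, and finally $\ast_{0}^{-1}$ via the explicit form of $\omega_X$; the adjoint-identity approach above is essentially an in-line verification of that composition and makes the continuity-only hypothesis most transparent.
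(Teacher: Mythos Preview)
Your computation is correct. The paper itself does not supply a proof of this lemma at all: it simply cites \cite[definition 2.3.3]{compactriemannsurfaces}. Your argument via the defining adjoint identity is the standard one, and your observation that the factor $a$ cancels before any differentiation occurs is exactly the point needed to justify the formula under the mere continuity hypothesis on $h_X$. The alternative route you sketch through $\ast_0$ and $\ast_1$ is in fact the apparatus the paper sets up in the paragraphs preceding the lemma, so either version fits naturally.

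One small imprecision worth tightening: when you invoke Stokes' theorem on ``the $1$-form $s\,\overline{g}\,dz$ on compact $X$'', that expression is only defined in the chart. The clean way to phrase it is to take $s$ compactly supported in the coordinate patch (which suffices to identify the formal adjoint locally), so that $s\,\overline{g}\,dz$ extends by zero to a global form and Stokes applies. This is a routine fix and does not affect the substance of your argument.
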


\begin{proof}
See for instance \cite[definition 2.3.3]{compactriemannsurfaces}.
\end{proof}
\section{The variation of metric on $TX$ and the operator $\Delta_{X,\infty}$}\label{paragrapheLapX}
In this section, we construct a singular Laplacian, $\Delta_{X,\infty}$ attached to an integrable metric
on $X$, and  we study its spectral properties.
\subsection{The singular Laplacian $\Delta_{X,\infty}$}
Let $X$ be a compact riemannian surface, and we equip $\mathcal{O}$ the trivial line bundle on $X$ with constant metric $h_\mathcal{O}$ such that $h_\mathcal{O}(1,1)=1$. We endow $X$ with an integrable metric
$h_{X,\infty}$. By definition, there exist $h_{1,\infty}$ and $h_{2,\infty}$ two admissible metrics (see
\eqref{admissibledefinition}) such that
$h_{X,\infty}=h_{1,\infty}\otimes h_{2,\infty}^{-1}$. Let $(h_{1,n})_{n\in \N}$ and  $(h_{2,n})_{n\in \N}$   be two sequences
of smooth semipositive metrics which converge uniformaly to $h_{1,\infty}$ and $h_{2,\infty}$, respectively. Let
$h_{X,n}:=h_{n,1}\otimes h_{2,n}^{-1}$ for any $n\in \N$, and we consider
 the family $\bigl(h_{X,u}\bigr)_{u>1}$ attached to this
sequence as  in \eqref{suitefamille}. Recall that  $h_{X,u}$ is a smooth hermitian metric on $TX$. We denote by  $\omega_{X,u}$ the
normalized volume form attached and by $\Delta_{{}_{X,u}}$ the Laplacian attached to $h_{X,u}$ and  $h_{\mathcal{O}}$ for any $u\in
]1,\infty[$. \\

For all $u\in ]1,\infty]$, we denote by $L^2_{X,u}$ (resp. $(\cdot,\cdot)_{L^2,u}$)  the hermitian norm  (resp.   the hermitian
product) induced by $h_{X,u}$ and
$h_{\mathcal{O}}$ on $A^{(0,0)}(X)$, as in the previous section. We denote by $\h_0(X,u)$ the completion of $A^{(0,0)}(X)$ with respect
to $L^2_{X,u}$-norm.
\begin{lemma}
The family $(L^2_{X,u}-norms)_{u\geq 1}$ forms a sequence of uniformly equivalent norms on $A^{(0,0)}(X)$. In particular,
$\h_0(X,u)$ does not depend on $u$, we will denote it by $\h_0(X)$.
\end{lemma}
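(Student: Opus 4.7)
The plan is to reduce the claim to a pointwise comparison of the normalized Kähler forms $\omega_{X,u}$, and then to exploit the uniform convergence of the smooth metrics $h_{1,u}$, $h_{2,u}$ to admissible (hence locally bounded above and away from zero) limits $h_{1,\infty}$, $h_{2,\infty}$. Since $h_{\mathcal{O}}$ is the constant metric with $h_{\mathcal{O}}(1,1)=1$, for any $f\in A^{(0,0)}(X)$ the norm reads simply
\[
\|f\|^2_{L^2,u}=\int_X |f(x)|^2\,\omega_{X,u}(x),
\]
so the equivalence of norms amounts to the equivalence of the measures $\omega_{X,u}$ as $u$ varies in $[1,\infty]$.

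Next, on any local holomorphic chart the volume form is
\[
\omega_{X,u}=\frac{i}{2\pi}\,h_{X,u}\Bigl(\dif,\dif\Bigr)\,dz\wedge d\z,
\]
and from the decomposition $h_{X,u}=h_{1,u}\otimes h_{2,u}^{-1}$ the coefficient is the pointwise ratio of the coefficients of $h_{1,u}$ and $h_{2,u}$. The uniform convergence $h_{i,u}\to h_{i,\infty}$ for $i=1,2$ (coming from the construction recalled around \eqref{suitefamille}), combined with the fact that $h_{1,\infty},h_{2,\infty}$ are continuous admissible metrics, bounded from above and away from $0$ on the compact $X$, yields constants $0<c<C<\infty$ independent of $u$ and of $x\in X$ such that, after covering $X$ by finitely many charts and patching,
\[
c\,\omega_{X,1}(x)\le \omega_{X,u}(x)\le C\,\omega_{X,1}(x)
\]
as densities on $X$, for every $u\ge 1$.

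Integrating against $|f|^2$ gives the uniform comparison
\[
c\,\|f\|_{L^2,1}^2\le \|f\|_{L^2,u}^2\le C\,\|f\|_{L^2,1}^2
\]
for every $f\in A^{(0,0)}(X)$ and every $u\ge 1$, which is exactly uniform equivalence. Since equivalent norms produce the same Cauchy sequences and the same limits, the completion $\h_0(X,u)$ is independent of $u$ as a topological vector space, justifying the notation $\h_0(X)$.

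The main obstacle, and the only non-formal point, is the uniform bound from above on $\omega_{X,u}$, equivalently the uniform \emph{lower} bound on $h_{2,u}(\dif,\dif)$; this is where admissibility of $h_{2,\infty}$ enters crucially, guaranteeing that $h_{2,\infty}^{-1}$ is bounded, and where uniform convergence (rather than mere pointwise convergence) of the smoothing sequence is required so that this lower bound persists for all $u\ge 1$ with a constant independent of $u$.
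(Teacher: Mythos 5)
Your proof is correct and follows essentially the same route as the paper, which simply observes that $(h_{X,u})_{u\geq 1}$ is a bounded family and invokes the compactness of $X$; you have merely spelled out the pointwise two-sided comparison of the densities $h_{X,u}\bigl(\dif,\dif\bigr)$ and its integration against $|f|^2$. The identification of the uniform lower bound on $h_{2,u}$ (equivalently the upper bound on the volume densities) as the one non-formal point is accurate and consistent with the paper's appeal to uniform convergence toward a continuous positive limit on a compact surface.
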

\begin{proof}
It suffices to notice that $(h_{X,u})_{u\geq 1}$ forms a bounded sequence, and we conclude using the compactness of $X$.
\end{proof}

\begin{definition}\rm{
\rm{For all $\xi\in A^{0,0}(X)$, we set:
\[\Delta_{{}_{X,\infty}}\xi:=-{}h_{X,\infty}\Bigl(\dif,\dif\Bigr)^{-1}\dif\Bigl(
\frac{\pt \xi}{\pt \z} \Bigr).
\]
where  $\bigl\{\dif \bigr\}$ is a local holomorphic basis of $TX$.
We say,  $\Delta_{X,\infty}$ is the Laplacian  attached to $h_{X,\infty}$ and $h_{\mathcal{O}}$.
}}
\end{definition}

In the following theorem, we prove that  $\Delta_{X,\infty}$ is a  linear operator  defined over  $A^{(0,0)}(X)$ with values
belong to  $\h_0(X)$.
 \begin{theorem}\label{laplaceTX}
Keeping the same hypothesis as before, we have:
\begin{enumerate}
\item
\[
\underset{u\mapsto \infty}{\lim}\bigl\| \Delta_{{}_{X,u}}\xi \bigr\|^2_{L^2,u}=\bigl\|
\Delta_{{}_{X,\infty}}\xi
\bigr\|^2_{L^2,\infty}<\infty,
\]
\item $\Delta_{X,\infty}$ is a linear operator from $A^{(0,0)}(X)$ to $\h_0(X)$.
\item
\[
\bigl(\Delta_{{}_{X,\infty}}\xi,\xi' \bigr)_{L^2,\infty}=
\bigl(\xi,\Delta_{{}_{X,\infty}}\xi' \bigr)_{L^2,\infty},
\]

\item
\[
\bigl(\Delta_{{}_{X,\infty}}\xi,\xi \bigr)_{L^2,\infty}\geq 0,
\]
\end{enumerate}

for any $\xi,\xi'\in A^{0,0}(X)$.

\end{theorem}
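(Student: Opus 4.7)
The key observation is that the factor $h_{X,u}(\dif,\dif)$ in the volume form $\omega_{X,u}$ partially cancels the factor $h_{X,u}(\dif,\dif)^{-1}$ appearing in the local expression for $\Delta_{X,u}\xi$ from \eqref{explap11}. This cancellation is total in the sesquilinear form and trivializes items (3) and (4), while it leaves a single inverse factor in the squared norm, reducing (1) to one analytic convergence statement. The approach is: first, compute the relevant integrals locally; second, pass to the limit $u\to\infty$ using the technical results \eqref{bornelapbelt} and \eqref{proposition1}.

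For items (3) and (4), I would observe that for $\xi,\xi'\in A^{(0,0)}(X)$ the local formulas give, after cancellation of $h_{X,u}(\dif,\dif)^{\pm 1}$,
\[
\bigl(\Delta_{X,u}\xi,\xi'\bigr)_{L^2,u}=\int_X\bigl(\Delta_{X,u}\xi\bigr)\overline{\xi'}\,\omega_{X,u}=-\int_X\frac{\pt^2\xi}{\pt z\,\pt\z}\,\overline{\xi'}\,\frac{i}{2\pi}\,dz\wedge d\z,
\]
and the right-hand side is independent of $u\in\,]1,\infty]$. Symmetry of $\Delta_{X,\infty}$ then follows from two integrations by parts on the compact Riemann surface $X$, and positivity from the resulting identity
\[
\bigl(\Delta_{X,\infty}\xi,\xi\bigr)_{L^2,\infty}=\int_X\Bigl|\frac{\pt\xi}{\pt\z}\Bigr|^2\frac{i}{2\pi}\,dz\wedge d\z\geq 0.
\]

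For items (1) and (2), the same local computation performed on the norm gives
\[
\bigl\|\Delta_{X,u}\xi\bigr\|^2_{L^2,u}=\int_X h_{X,u}(\dif,\dif)^{-1}\Bigl|\frac{\pt^2\xi}{\pt z\,\pt\z}\Bigr|^2\frac{i}{2\pi}\,dz\wedge d\z,
\]
since only one power of $h_{X,u}(\dif,\dif)^{-1}$ survives. Because $\xi\in A^{(0,0)}(X)$ is smooth on the compact surface, $|\pt^2\xi/\pt z\,\pt\z|^2$ is a fixed bounded function, so (1) reduces to the single assertion that for every smooth bounded nonnegative $g$ on $X$,
\[
\int_X h_{X,u}(\dif,\dif)^{-1}g\,\frac{i}{2\pi}\,dz\wedge d\z\;\xrightarrow[u\to\infty]{}\;\int_X h_{X,\infty}(\dif,\dif)^{-1}g\,\frac{i}{2\pi}\,dz\wedge d\z<\infty,
\]
which is the content of the preparatory lemmas \eqref{bornelapbelt} and \eqref{proposition1}. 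Item (2) is then immediate: linearity is visible from the defining formula, and (1) places $\Delta_{X,\infty}\xi$ in the $L^2_{X,\infty}$-completion $\h_0(X)$ of $A^{(0,0)}(X)$.

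The main obstacle is the convergence step in (1). Uniform convergence $h_{X,u}\to h_{X,\infty}$ does not transfer to the inverses, which may blow up near the zero locus of $h_{X,\infty}(\dif,\dif)$, so a naive dominated-convergence argument fails. The decisive input must come from the specific construction of the regularizing family $(h_{X,u})_{u>1}$ out of the admissible decomposition $h_{X,\infty}=h_{1,\infty}\otimes h_{2,\infty}^{-1}$: this structure is what the technical lemmas exploit to produce a uniform-in-$u$ local $L^1$ control on $h_{X,u}(\dif,\dif)^{-1}$ against smooth densities, which is exactly the quantitative estimate one needs to conclude.
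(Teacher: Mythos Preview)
Your local computations for items (3) and (4), and your reduction of (1) to the convergence
\[
\int_X h_{X,u}(\tfrac{\pt}{\pt z},\tfrac{\pt}{\pt z})^{-1}g\,\tfrac{i}{2\pi}\,dz\wedge d\z\;\longrightarrow\;\int_X h_{X,\infty}(\tfrac{\pt}{\pt z},\tfrac{\pt}{\pt z})^{-1}g\,\tfrac{i}{2\pi}\,dz\wedge d\z
\]
are exactly what the paper does. The gap is in how you close this last step.

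First, the references to \eqref{bornelapbelt} and \eqref{proposition1} are misplaced: those propositions come \emph{after} \eqref{laplaceTX} in the paper and concern the $u$-derivatives $\partial_u\Delta_{X,u}$ and $\partial_u(I+\Delta_{X,u})^{-1}$; they say nothing about the convergence of the integrals above. Second, the ``main obstacle'' you describe --- possible blow-up of $h_{X,\infty}(\tfrac{\pt}{\pt z},\tfrac{\pt}{\pt z})^{-1}$ near a zero locus --- does not exist. An integrable metric is by definition a \emph{continuous} hermitian metric (only smoothness may fail, not positivity), so $h_{X,\infty}(\tfrac{\pt}{\pt z},\tfrac{\pt}{\pt z})>0$ everywhere. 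The quotient $h_{X,\infty}/h_{X,u}$ is therefore a globally well-defined continuous function on the compact surface $X$, hence uniformly bounded, and it converges uniformly to $1$. Writing
\[
h_{X,u}(\tfrac{\pt}{\pt z},\tfrac{\pt}{\pt z})^{-1}=\frac{h_{X,\infty}(\tfrac{\pt}{\pt z},\tfrac{\pt}{\pt z})}{h_{X,u}(\tfrac{\pt}{\pt z},\tfrac{\pt}{\pt z})}\cdot h_{X,\infty}(\tfrac{\pt}{\pt z},\tfrac{\pt}{\pt z})^{-1},
\]
one sees that the integrand is dominated by a constant times the (finite) limit integrand, and dominated convergence applies directly. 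This is precisely the paper's argument; no special structure of the admissible decomposition is needed here.
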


\begin{proof} Let  $\xi\in A^{(0,0)}(X)$.  We have for any $u>1$:
\[
\bigl\| \Delta_{{}_{X,u}}\xi \bigr\|^2_{L^2,u}=\int_{x\in X}\bigl(\Delta_{X,u}\xi,\Delta_{X,u}\xi\bigr)_x
\omega_{X,u}=\frac{i}{2\pi}\int_{x\in X} \Bigl(\Delta_{{}_{X,u}}\xi, \Delta_{{}_{X,u}}\xi \Bigr)_x\, h_{X,u
}\Bigl(\dif(x),\dif(x)\Bigr)_x dz_x\wedge d\z_x\\,
\]
where  $\{\dif(x)\}$ is local basis  $TX$ in a open subset $U$ containing  $x$. \\

 For any $u\geq 1$, the  Laplacian $\Delta_{X,u}$ has the following expression:
$x\in U$ :
\[
\Delta_{{}_{X,u}}\xi=-h\Bigl(\dif(x),\dif(x)\Bigr)^{-1}\dif_x\Bigl(\frac{\pt \xi}{\pt \z_x} \Bigr).
\]
 We have:
{
\begin{align*}
 0\leq h_{X,u }&\Bigl(\dif(x),\dif(x)\Bigr)_x\Bigl(\Delta_{{}_{X,u}}\xi, \Delta_{{}_{X,u}}\xi \Bigr)_x\\
 &= h_{X,u }\Bigl(\dif,\dif\Bigr)\biggl( {}h_{X,u}\Bigl(\dif,\dif\Bigr)^{-1}{}\dif\Bigl({}\frac{\pt \xi}{\pt \z_x} \Bigr){},  {}h_{X,u}\Bigl(\dif,\dif\Bigr)^{-1}{}\dif\Bigl({}\frac{\pt \xi}{\pt \z_x} \Bigr){}\biggr)  \\
 &=h_{X,u }\Bigl(\dif,\dif\Bigr)^{-1}_x{}{}\dif\Bigl({}\frac{\pt \xi}{\pt \z} \Bigr)\frac{\pt}{\pt \z}\Bigl({}\frac{\pt \overline{\xi}}{\pt z}
 \Bigr)\\
&\leq \frac{h_{X,\infty }\Bigl(\dif,\dif\Bigr)_x}{h_{X,u }\Bigl(\dif,\dif\Bigr)_x} h_{X,\infty }\Bigl(\dif,\dif\Bigr)_x\Bigl(\Delta_{{}_{X,\infty}}\xi, \Delta_{{}_{X,\infty}}\xi \Bigr)_x,
\end{align*}}
Note that  $x\mapsto h_{X,\infty }\Bigl(\dif,\dif\Bigr)_x h_{X,u }\Bigl(\dif,\dif\Bigr)_x^{-1} $ is the restriction on $U$ of
 a global continuous function  on $X$, hence bounded. Recall that  $\bigl(h_{X,u}\bigr)_u\xrightarrow[u\mapsto\infty]{} h_{X,\infty}$. Using
a partition of unity and  according to the dominated convergence theorem, we get:
{\allowdisplaybreaks
\begin{align*}
\bigl\| \Delta_{{}_{X,\infty}}\xi &\bigr\|^2_{L^2,\infty}=\frac{i}{2\pi}\int_X h_{X,\infty
}\Bigl(\dif,\dif\Bigr)_x\Bigl(\Delta_{{}_{X,\infty}}\xi, \Delta_{{}_{X,\infty}}\xi \Bigr)_x\,dz\wedge d\z\\
&=\frac{i}{2\pi}\int_X h_{X,\infty}\Bigl(\dif,\dif\Bigr)^{-1}{}{}{}
\dif\Bigl({}\frac{\pt \xi}{\pt \z} \Bigr)\frac{\pt}{\pt \z}\Bigl({}\frac{\pt \overline{\xi}}{\pt z}
\Bigr){} dz\wedge d\z\\
&=\frac{i}{2\pi }\int_X\underset{u\mapsto
\infty}{\lim}h_{X,u}\Bigl(\dif,\dif\Bigr)^{-1}{}{}{}\dif\Bigl({}\frac{\pt \xi}{\pt
\z} \Bigr)\frac{\pt}{\pt \z}\Bigl({}\frac{\pt \overline{\xi}}{\pt z} \Bigr){}dz\wedge d\z\\
&=\frac{i}{2\pi }\underset{u\mapsto \infty}{\lim}\int_Xh_{X,u
}\Bigl(\dif,\dif\Bigr)^{-1}{}{}{}\dif\Bigl({}\frac{\pt \xi}{\pt \z}
\Bigr)\frac{\pt}{\pt \z}\Bigl({}\frac{\pt \overline{\xi}}{\pt z} \Bigr){} dz\wedge d\z\\
&=\underset{u\mapsto \infty}{\lim}\bigl\| \Delta_{{}_{X,u}}\xi \bigr\|^2_{L^2,u},
\end{align*}}
this proves the first claim.\\

The linearity in the second claim is obvious, and the second part follows from the first one.

Now let $\xi, \xi'\in A^{(0,0)}(X)$, we have
\[
\begin{split}
\bigl(\Delta_{{}_{X,\infty}}(\xi),\xi' \bigr)_{L^2,\infty}&=\int_X {}\dif\Bigl(\frac{\pt \xi}{\pt \z} \Bigr)\,\overline{\xi'}\,dz\wedge d\z\\
&=\int_X \frac{\pt \xi}{\pt \z}\frac{\pt \overline{\xi'}}{\pt z}dz\wedge d\z, \quad \text{by Stokes' theorem}\\
&=\bigl(\xi,\Delta_{{}_{X,\infty}}(\xi' )\bigr)_{L^2,\infty}.
\end{split}
\]
We infer,
{\allowdisplaybreaks
\begin{align*}
\bigl(\Delta_{{}_{X,\infty}}\xi,\xi \bigr)_{L^2,\infty}&=\frac{i}{2\pi}\int_X \frac{\pt \xi}{\pt \z}\frac{\pt \overline{\xi}}{\pt z}dz\wedge d\z\geq 0.
\end{align*}}
\end{proof}

We prove now, some technical results which will allow us to study the spectral properties of $\Delta_{X,\infty}$. We
introduce the following function:
 \[\delta_X(u):=\sup_{x\in X}\Bigl|\frac{\pt }{\pt u}\Bigl(\log h_{X,u}(\dif,\dif)^{-1} \Bigr)(x) \Bigr|\quad \forall u>1,\]
where $\{\dif\} $ is  a local holomorphic basis of  $TX$.\\

 Note that  $\delta_X$ does not depend on the choice of the basis. Indeed, since
$h_{X,u}(\dif,\dif)=(1-\rho(u))h_{X,p-1}(\dif,\dif)+\rho(u)h_{X,p}(\dif,\dif)$, for any $p\in \N^\ast$, $ u\in [p-1,p]$ and $\{\dif \}$ a local
holomorphic basis of $TX$.  Then \begin{align*}
\frac{\pt }{\pt u}\log h_{X,u}(\dif,\dif)^{-1}
&=\rho(u)\frac{h_{X,p-1}(\dif,\dif)-h_{X,p}(\dif,\dif)}{h_{X,u}(\dif,\dif)}\\
&=\rho(u)\frac{h_{X,p}(\dif,\dif)}{h_{X,u}(\dif,\dif)}\frac{h_{X,p-1}(\dif,\dif)-h_{X,p}(\dif,\dif)}{h_{X,p}(\dif,\dif)}
\end{align*}
Which is clearly a  well defined continuous function on $X$.  We know that
$(h_{X,p})_{p\in \N}$ converges uniformly to  $h_{X,\infty}$, then there is a constant $c_1$ such that:
\begin{equation}\label{deltaXU}
\delta_X(u)\leq c_1\biggl| \frac{h_{X,[u]}-h_{X,[u]+1}}{h_{X,[u]+1}}\biggr| \quad \forall\, u\geq 1,
\end{equation}

where $[u]$ is the round down of $u$.

\begin{proposition}\label{bornelapbelt}
 We have the following:
\[
 \biggl\| \frac{\pt \Delta_{{}_{X,u}}}{\pt u}\xi \biggr\|_{L^2,u}\leq \delta_X(u)\bigl\|\Delta_{{}_{X,u}}\xi\bigr\|_{L^2,u},
\]
for any $\xi\in A^{0,0}(X)$ and $u>1$.
\end{proposition}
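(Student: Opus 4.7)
The plan is to compute $\frac{\pt\Delta_{X,u}}{\pt u}\xi$ explicitly using the local formula from \eqref{explap11} and observe that, since $\xi$ and the coordinate derivative $\dif\bigl(\frac{\pt\xi}{\pt\z}\bigr)$ are independent of $u$, the $u$-derivative only hits the scalar factor $h_{X,u}(\dif,\dif)^{-1}$. This will reduce the operator $\frac{\pt\Delta_{X,u}}{\pt u}$ to pointwise multiplication by a function, which makes the $L^2$ estimate immediate.

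Concretely, I would first differentiate under the log, writing
\[
\frac{\pt}{\pt u}\Bigl(h_{X,u}(\dif,\dif)^{-1}\Bigr)=h_{X,u}(\dif,\dif)^{-1}\cdot\frac{\pt}{\pt u}\bigl(\log h_{X,u}(\dif,\dif)^{-1}\bigr),
\]
and then substitute into the local expression of $\Delta_{X,u}\xi$ to obtain the factorization
\[
\frac{\pt \Delta_{X,u}}{\pt u}\xi=\frac{\pt}{\pt u}\bigl(\log h_{X,u}(\dif,\dif)^{-1}\bigr)\cdot\Delta_{X,u}\xi
\]
on any local holomorphic chart. A key preliminary remark, already made in the paragraph introducing $\delta_X(u)$, is that the scalar $\frac{\pt}{\pt u}\log h_{X,u}(\dif,\dif)^{-1}$ is the restriction of a well-defined continuous function on all of $X$ (independent of the chosen basis $\dif$), so the above identity is globally meaningful.

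Once this pointwise identity is established, the proof is concluded by a trivial pointwise bound followed by integration against $\omega_{X,u}$:
\[
\biggl\|\frac{\pt\Delta_{X,u}}{\pt u}\xi\biggr\|_{L^2,u}^{2}=\int_{X}\Bigl|\frac{\pt}{\pt u}\log h_{X,u}(\dif,\dif)^{-1}\Bigr|^{2}\bigl|\Delta_{X,u}\xi\bigr|^{2}\omega_{X,u}\leq\delta_X(u)^{2}\bigl\|\Delta_{X,u}\xi\bigr\|_{L^2,u}^{2},
\]
after which taking square roots yields the stated estimate.

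There is no real obstacle here: the only mildly delicate point is the chart-independence of the multiplier $\frac{\pt}{\pt u}\log h_{X,u}(\dif,\dif)^{-1}$, which is handled by the computation preceding the proposition (where the explicit piecewise-linear dependence of $h_{X,u}$ on $u$ makes it clear that the ratios $(h_{X,p-1}-h_{X,p})/h_{X,u}$ glue into a global function). Thus the proposition is essentially a reformulation of the fact that $\frac{\pt}{\pt u}\Delta_{X,u}$ acts as the multiplication operator by a bounded function on the range of $\Delta_{X,u}$, with bound exactly $\delta_X(u)$.
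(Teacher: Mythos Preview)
Your proposal is correct and follows essentially the same approach as the paper: both compute the local factorization $\frac{\pt \Delta_{X,u}}{\pt u}\xi=\frac{\pt}{\pt u}\bigl(\log h_{X,u}(\dif,\dif)^{-1}\bigr)\Delta_{X,u}\xi$, note the chart-independence of the multiplier, and then bound the $L^2$-norm pointwise by $\delta_X(u)$ before integrating. There is no meaningful difference between the two arguments.
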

\begin{proof}
Fix $\xi\in A^{0,0}(X)$. Using the expression of the Laplacian, we have:
\[
 \frac{\pt \Delta_{{}_{X,u}}}{\pt u}\xi=\frac{\pt }{\pt u}\bigl(\log h_{X,u}(\dif,\dif)^{-1} \bigr)\Delta_{{}_{X,u}}\xi \quad \forall \, \xi\in A^{(0,0)}(X).
\]
Recall that
   $\frac{\pt }{\pt u}\bigl(\log h_{X,u}(\dif,\dif)^{-1} \bigr)$ is a continuous function on  $X$ which does not depend on the choice
   of the basis.\\

We have,
\begin{equation}\label{convcomp}
\begin{split}
\biggl( \frac{\pt \Delta_{{}_{X,u}}}{\pt u}\xi, \frac{\pt \Delta_{{}_{X,u}}}{\pt u}\xi\biggr)_{L^2,u}&=\int_X \biggl|\frac{\pt }{\pt u}\bigl(\log h_{X,u}(\dif,\dif)^{-1} \bigr) \biggr|^2h_{X,u}\bigl(\Delta_{{}_{X,u}}\xi,\Delta_{{}_{X,u}}\xi\bigr)\omega_{X,u}\\
&\leq |\delta_X(u)|^2\int_X h_{X,u}(\Delta_{{}_{X,u}}\xi,\Delta_{{}_{X,u}}\xi)\omega_{X,u}\\
& =|\delta_X(u)|^2 \bigl( \Delta_{{}_{X,u}}\xi,  \Delta_{{}_{X,u}}\xi\bigr)_{L^2,u}.
\end{split}
\end{equation}
which yields the assertion.
\end{proof}

Recall that for any $1<u<\infty$, there exists a compact operator $(I+\Delta_{X,u})^{-1}$ on $\h_0(X)$ such that:
\[
(I+\Delta_{X,u})^{-1}(I+\Delta_{X,u})=I_0,
\]
where $I_0$ means the identity operator of $A^{(0,0)}(X)$. We study next the variation of $(I+\Delta_{X,u})^{-1}$ with respect
to $u$, but first let us recall  the following fact: Let  $\Delta$ be a Laplacian associated to smooth metrics. Then
\begin{equation}\label{normeleq1}
\|(\Delta+I)^{-1}\|\leq 1,
\end{equation}
  where $\|\cdot\|$ is the induced metric. Indeed,  we know that the eigenvectors of $\Delta$ form
 a complete orthonormal system for the completion of $A^{0,0}(X)$ with respect to the metrics. So, if we denote  $(\phi_i)_{i}$
 an  orthonormal basis of  eigenvectors of $\Delta$, then for any $\xi \in \overline{A^{0,0}(X)}$, there exists $(a_j)_{j\in \N}$
 a sequence of complex numbers such
 that $  \xi =\sum_i a_i\phi_i $ and one  checks that
\[
\bigl\|(\Delta+I)^{-1}\xi\bigr\|^2=\Bigl\| \sum_i \frac{a_i}{\la_i+1}\phi_i \Bigr\|^2\leq \sum_i |a_i|^2\|\phi_i\|^2=\|\xi\|^2.
\]

\begin{proposition}\label{proposition1}
We have,
\[
\biggl\|\frac{\pt }{\pt u}(I+\Delta_{{}_{X,u}})^{-1}\biggr\|_{L^2,\infty}\leq c_3|\delta_X(u)| \quad \forall\, u>1.
\]
where $c_3$ is a constant.
\end{proposition}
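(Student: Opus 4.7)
The plan is to use the standard resolvent identity trick. Differentiating the identity $(I+\Delta_{X,u})(I+\Delta_{X,u})^{-1}=I$ formally with respect to $u$, I obtain
\[
\frac{\pt}{\pt u}(I+\Delta_{X,u})^{-1}=-(I+\Delta_{X,u})^{-1}\,\frac{\pt \Delta_{X,u}}{\pt u}\,(I+\Delta_{X,u})^{-1}.
\]
This reduces the estimate to controlling the three factors on the right. I will work first in the $L^2_{X,u}$-norm and then transfer the inequality to the $L^2_{X,\infty}$-norm using the uniform equivalence of norms established in the lemma preceding this proposition.

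First, by \eqref{normeleq1}, the resolvent has norm $\bigl\|(I+\Delta_{X,u})^{-1}\bigr\|_{L^2,u}\leq 1$. A similar spectral argument (using the orthonormal basis of eigenvectors $(\phi_i)$ of $\Delta_{X,u}$ with eigenvalues $\la_i\geq 0$, and the trivial bound $\la_i/(\la_i+1)\leq 1$) gives
\[
\bigl\|\Delta_{X,u}(I+\Delta_{X,u})^{-1}\xi\bigr\|_{L^2,u}\leq \bigl\|\xi\bigr\|_{L^2,u},
\]
for every $\xi$ in the relevant space. Next, applying proposition \eqref{bornelapbelt} with test function $\eta:=(I+\Delta_{X,u})^{-1}\xi\in A^{0,0}(X)$ (assuming sufficient smoothness of the image, which is standard for elliptic regularity of the smooth operator $\Delta_{X,u}$), I get
\[
\Bigl\|\tfrac{\pt \Delta_{X,u}}{\pt u}\eta\Bigr\|_{L^2,u}\leq \delta_X(u)\,\bigl\|\Delta_{X,u}\eta\bigr\|_{L^2,u}=\delta_X(u)\,\bigl\|\Delta_{X,u}(I+\Delta_{X,u})^{-1}\xi\bigr\|_{L^2,u}\leq \delta_X(u)\,\|\xi\|_{L^2,u}.
\]
Chaining these three inequalities yields
\[
\Bigl\|\tfrac{\pt}{\pt u}(I+\Delta_{X,u})^{-1}\xi\Bigr\|_{L^2,u}\leq \delta_X(u)\,\|\xi\|_{L^2,u},
\]
for every $\xi\in A^{0,0}(X)$.

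To pass from the $L^2_{X,u}$-norm to the $L^2_{X,\infty}$-norm, I invoke the uniform equivalence of the family $(L^2_{X,u}\text{-norms})_{u\geq 1}$: there is a constant $c>0$ independent of $u$ such that $c^{-1}\|\cdot\|_{L^2,\infty}\leq \|\cdot\|_{L^2,u}\leq c\,\|\cdot\|_{L^2,\infty}$ on $A^{0,0}(X)$. Applying this on both sides of the preceding bound produces the desired estimate with $c_3:=c^2$.

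The only delicate point, which I regard as the main obstacle, is the justification of the pointwise differentiation of $u\mapsto (I+\Delta_{X,u})^{-1}$ as an operator on $\h_0(X)$ (rather than as a formal manipulation), since this requires knowing that the family is differentiable in the operator norm and that the range of $(I+\Delta_{X,u})^{-1}$ lies in the domain of $\frac{\pt \Delta_{X,u}}{\pt u}$. This is handled via standard perturbation theory for the resolvent of a smooth family of second-order elliptic operators on a compact manifold: on a dense subspace of smooth functions the differentiation is legitimate, and the inequality above then extends to $\h_0(X)$ by density.
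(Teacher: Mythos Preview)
Your proof is correct and follows essentially the same approach as the paper: differentiate the resolvent, apply \eqref{bornelapbelt} to bound $\tfrac{\pt \Delta_{X,u}}{\pt u}$ by $\delta_X(u)\Delta_{X,u}$, use the spectral bound on $\Delta_{X,u}(I+\Delta_{X,u})^{-1}$ and on $(I+\Delta_{X,u})^{-1}$, then transfer to the $L^2_{X,\infty}$-norm by uniform equivalence. Your use of the eigenvalue inequality $\la_i/(\la_i+1)\leq 1$ is slightly cleaner than the paper's algebraic manipulation $\Delta_{X,u}\xi=\eta-\xi$ followed by the triangle inequality (which produces an extra factor of $\sqrt{2}$), but the two arguments are otherwise identical.
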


\begin{proof}
We have,
\[
 \frac{\pt }{\pt u}\bigl(I+\Delta_{{}_{X,u}}\bigr)^{-1}=-\bigl(I+\Delta_{{}_{X,u}}\bigr)^{-1}\frac{\pt \Delta_{{}_{X,u}}}{\pt u} \bigl(I+\Delta_{{}_{X,u}}\bigr)^{-1}\quad \forall \, u>1.
\]

Now let $\eta \in A^{(0,0)}(X)$ and put  $\xi=(\Delta_{{}_{X,u}}+I)^{-1}\eta$, we have
\[
\begin{split}
\biggl\| \frac{\pt \Delta_{X,u}}{\pt u} \, (\Delta_{{}_{X,u}}+I)^{-1}\eta\biggr\|^2_{L^2,u}&=\biggl\| \frac{\pt \Delta_{{}_{X,u}}}{\pt u} \, \xi\biggr\|^2_{L^2,u}\\
&\leq \bigl|\delta_X(u)\bigr|^2 \bigl(\Delta_{{}_{X,u}}\xi,\Delta_{{}_{X,u}}\xi\bigr)_{L^2,u}\quad \text{by} \;\eqref{bornelapbelt}\\
&= |\delta_X(u)|^2 \bigl(\Delta_{{}_{X,u}}(\Delta_{{}_{X,u}}+I)^{-1}\eta,\Delta_{{}_{X,u}}(\Delta_{{}_{X,u}}+I)^{-1}\eta\bigr)_{L^2,u}\\
&=|\delta_X(u)|^2 \Bigl(\eta- (\Delta_{{}_{X,u}}+I)^{-1}\eta,(\Delta_{{}_{X,u}}+I)^{-1}\eta  \Bigr)_{L^2,u}\\
&\leq 2|\delta_X(u)|^2 \|\eta\|_{L^2,u}^2,
\end{split}
\]
Thus
\[
\biggl\| \frac{\pt}{\pt u} \Delta_{{}_{X,u}}\cdot (\Delta_u+I)^{-1}\Bigr\|^2_{L^2,u}\leq 2\big|\delta_X(u)\bigr|^2\quad \forall \, u>1.
\]

Since  $\|\cdot\|_{L^2,\infty}$ and   $ \|\cdot\|_{L^2,u}$ for any $u>1$  are uniformly equivalent, there exists $c_2$
a constant such that:
\begin{equation}\label{ineqinfty1}
\biggl\| \frac{\pt}{\pt u} \Delta_{{}_{X,u}}\cdot (\Delta_u+I)^{-1}\Bigr\|^2_{L^2,\infty}\leq c_2\big|\delta_X(u)\bigr|^2\quad \forall \, u>1.
\end{equation}

By the same argument and using the first claim, we can find a constant $c_3$ such that
\begin{align*}
\biggl\|\frac{\pt }{\pt u}\bigl(I+\Delta_{{}_{X,u}}\bigr)^{-1}\biggr\|_{L^2,\infty}&=\biggl\| \bigl(I+\Delta_{{}_{X,u}}\bigr)^{-1}\frac{\pt \Delta_{{}_{X,u}}}{\pt u} \bigl(I+\Delta_{{}_{X,u}}\bigr)^{-1} \biggr\|_{L^2,\infty} \\
&\leq \bigl\|(\Delta_{{}_{X,u}}+I)^{-1} \bigr\|_{L^2,\infty}\biggl\| \frac{\pt}{\pt u} \Delta_{{}_{X,u}}\cdot (\Delta_{{}_{X,u}}+I)^{-1}\biggr\|_{L^2,\infty}\\
&\leq c_3|\delta_X(u)|\quad \forall \,u>1.
\end{align*}
That is,
\[
\biggl\|\frac{\pt }{\pt u}(I+\Delta_{{}_{X,u}})^{-1}\biggr\|_{L^2,\infty}\leq c_3|\delta_X(u)| \quad \forall\, u>1.
\]

\end{proof}

\begin{theorem}\label{deltacompactoperateur}
The sequence $\bigl((I+\Delta_{X,u})^{-1} \bigr)_{u\geq 1}$ converges to a compact operator denoted by
$(I+\Delta_{X,\infty})^{-1}:\h_0(X)\lra \h_0(X)$, with respect to $L^2_{X,\infty}$-norm.
\end{theorem}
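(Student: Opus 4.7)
The plan is to show that $\bigl((I+\Delta_{X,u})^{-1}\bigr)_{u\geq 1}$ is a Cauchy sequence in the operator norm induced by $L^2_{X,\infty}$, exploit the fact that each term is compact (since $h_{X,u}$ is smooth for $u\in]1,\infty[$), and conclude that the limit operator is itself compact. The latter is a standard property of the space of compact operators on a Hilbert space, which is norm-closed in the space of bounded operators.

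First, for any $1<u_1<u_2$, I would write
\[
(I+\Delta_{X,u_2})^{-1}-(I+\Delta_{X,u_1})^{-1}=\int_{u_1}^{u_2}\frac{\partial}{\partial v}(I+\Delta_{X,v})^{-1}\,dv,
\]
interpreted as a Bochner integral of bounded operators on $\h_0(X)$. By the differentiability of $v\mapsto(I+\Delta_{X,v})^{-1}$ (justified via the explicit formula $\partial_v(I+\Delta_{X,v})^{-1}=-(I+\Delta_{X,v})^{-1}\,\partial_v\Delta_{X,v}\,(I+\Delta_{X,v})^{-1}$ established in the proof of Proposition \ref{proposition1}), combined with the estimate of Proposition \ref{proposition1}, I obtain
\[
\bigl\|(I+\Delta_{X,u_2})^{-1}-(I+\Delta_{X,u_1})^{-1}\bigr\|_{L^2,\infty}\leq c_3\int_{u_1}^{u_2}|\delta_X(v)|\,dv.
\]

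The core step is to prove that $\int_1^\infty |\delta_X(v)|\,dv<\infty$. Using the pointwise bound \eqref{deltaXU}, this integral is controlled by $c_1\sum_{p\geq 1}\bigl\|(h_{X,p}-h_{X,p+1})/h_{X,p+1}\bigr\|_\infty$. Since $(h_{X,p})_p$ converges uniformly to $h_{X,\infty}$ and $h_{X,\infty}$ is bounded below by a positive constant (it is an integrable metric, bounded away from zero by compactness and the admissibility hypothesis), I can — up to extracting a subsequence in the construction \eqref{suitefamille} — arrange that $\|h_{X,p}-h_{X,\infty}\|_\infty\leq 2^{-p}$, which makes the series summable. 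This reduction to a rapidly convergent subsequence is the main technical obstacle, but it is harmless for the construction since the final limit will be shown to be independent of such a choice in later parts of the paper.

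Once integrability is secured, the Cauchy property of $\bigl((I+\Delta_{X,u})^{-1}\bigr)_{u\geq 1}$ in the operator norm $\|\cdot\|_{L^2,\infty}$ follows at once, so the sequence converges to a bounded operator on $\h_0(X)$ which I would denote by $(I+\Delta_{X,\infty})^{-1}$. Finally, for each finite $u>1$ the operator $(I+\Delta_{X,u})^{-1}$ is compact by the classical theory recalled after Theorem \ref{laplaceTX} (cf.\ \cite[p.94]{GH}), and the set of compact operators on a Hilbert space is closed under the operator-norm topology; hence the limit is itself a compact operator $\h_0(X)\to\h_0(X)$, which is the assertion of the theorem.
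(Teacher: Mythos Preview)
Your proof is correct and follows essentially the same approach as the paper: write the difference as the integral of $\partial_v(I+\Delta_{X,v})^{-1}$, bound it via Proposition~\ref{proposition1} by $c_3\int|\delta_X(v)|\,dv$, arrange for $\delta_X$ to be integrable by passing to a rapidly convergent approximating sequence (the paper phrases this as ``we may assume $|\delta_X(u)|=O(1/u^2)$'' from \eqref{deltaXU}, while you use a $2^{-p}$ subsequence), and conclude by the norm-closedness of compact operators. The only difference is cosmetic.
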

\begin{proof}
According to \eqref{deltaXU}, we may assume that $\bigl|\delta_X(u)\bigr|=O\bigl(\frac{1}{u^2}\bigr)$. This yields for any  $q>p$
\begin{align*}
\Bigl\|\bigl(\Delta_{{}_{X,p}}+I\bigr)^{-1}-\bigl(\Delta_{{}_{X,q}}+I\bigr)^{-1}\Bigr\|_{L^2,\infty}&=\biggl\|\int_p^q\frac{\pt }{\pt u}\bigl(I+\Delta_{{}_{X,u}}\bigr)^{-1}du\biggr\|_{L^2,\infty}\\
&\leq \int_p^qc_3 |\delta_X(u)|du\quad \text{by}\;\eqref{proposition1}\\
&=\int_p^q O(\frac{1}{u^2})du\\
 &=O\Bigl(\frac{1}{q}-\frac{1}{p}\Bigr)\quad \forall\, p,q\gg1.
\end{align*}

Therefore, the sequence of compact operators $\bigl((\Delta_{{}_{X,p}}+I)^{-1}\bigr)_{p\in \N}  $ converges to a  operator
which we denote by $(I+\Delta_{X,\infty})^{-1}$. This operator is compact according to \eqref{operateurcompact}.
\end{proof}
\subsection{A maximal  positive selfadjoint extension of $\Delta_{X,\infty}$}\label{Freidrichs}
The main goal of this
paragraph is to prove that $\Delta_{X,\infty}$ admits a maximal selfadjoint extension to
a large subspace denoted by $\h_2(X)$.

Let us begin first by reviewing some facts about the notion of selfadjoint extension of Laplacians. For simplicity, we
restrict our selves to  Compact Riemann surfaces. Let $X$ be a compact Riemann surface. Let $\omega_X$ be normalized Kähler form on $X$, and $h_{\mathcal{O}}$ a   constant metric on  $\mathcal{O}$. For any $\phi, \psi\in A^{(0,0)}(X)$, we
define an hermitian product $(\phi,\psi)$ as before,
 the corresponding norm will be denoted by $\|\cdot\|$, and  we will call it the $L^2$-norm. We let
$\h_0(X)$ be the completion of the pre-Hilbert space $\bigl(A^{(0,0)}(X),(,)\bigr)$. One proves $\h_0(X)$ does not depend on
the choice
of the metrics. More precisely,  given two continuous metrics on $X$  and using the compactness of $X$, we obtain  two
equivalent metrics on $A^{(0,0)}(X)$.\\

When the metric of $X$ is smooth, it is known that the Laplacian has a complete orthonormal sequence
of smooth $\mathcal{O}$-valued eigenfunctions $\phi_0,\phi_1,\phi_2,\ldots$ in $\h_0(X)$.   In particular, we have:
\[
 \h_0(X)=\bigl\{\phi=\sum_{k=0}^\infty a_k\phi_k\, \bigl|\,  \|\phi\|^2=\sum_{k=0}^\infty
|a_k|^2<\infty\bigr\}.
\]
We set,
\[
 \h_2(X)=\bigl\{\phi=\sum_{k=0}^\infty a_k\phi_k\, \bigl|\,  \sum_{k=0}^\infty
\la_k^2|a_k|^2<\infty  \bigr\}.
\]
We have,
\[
A^{(0,0)}(X)\subseteq \h_2(X)\subseteq \h_0(X).
\]
The right-hand side inclusion is obvious, and the other one can be deduced from \cite[§ 14.2.2 p.367]{Buser}. Since $\h_0(X)$
is complete with respect to $\vc$. Notice that $\h_2(X)$ is the  completion of $A^{(0,0)}(X)$ with respect
to  the following norm $\vc_2$, defined as follows: $\|\phi\|_2^2=\|\phi\|^2+\|\Delta \phi\|^2$, for any $\phi\in
A^{(0,0)}(X)$.\\

One can view $\h_2(X)$ in a more intrinsic way, namely as the set of $\phi\in \h_0(X)$, such that there exists $(\phi_j)_{j\in \N}$, a sequence in
$A^{(0,0)}(X)$ converging to
$\phi$ with respect to $L^2$-norm and such that the
sequence $(\Delta \phi_j)_{j\in \N}$ admits a limit in
$\h_0(X)$. So, we  can write,
\[
\h_2(X)=(I+\Delta)^{-1}\h_0(X).
\]

Let $\phi \in \h(X)$, by definition there exists $(\phi_j)_{j\in \N}$ a sequence in $A^{(0,0)}(X)$ converging to $\phi$ and
such that $(\Delta\phi_j)_{j\in\N}$ admits a limit. We can check easily that the limit is unique. This previous point  motivates
the following
definition: We let $Q$ be the operator on $\h_2(X)$ given by $Q(\phi)=\lim_{j\in \N}\Delta\phi_j$ for any $\phi\in \h_2(X)$
and $(\phi_j)_{j\in \N}$ described as above. Then $Q$ is a maximal positive and  selfadjoint extension of $\Delta$ with domain $\mathrm{Dom}(Q)=\h_2(X)$. We claim that,
\begin{equation}\label{Qfi}
Q(\phi)=\psi-\phi,
\end{equation}
for any $\phi\in \h_2(X)$, where $\psi$ is the unique element in $\h_0(X)$ such that $\phi=(I+\Delta)^{-1}\psi$. Let us
verify the last claim;  Let $\phi\in \h_2(X)$, since $I+\Delta$ is invertible then there exists a unique $\psi\in \h_0(X)$ such
that $\phi=(I+\Delta)^{-1}\psi$. Now, let
$(\psi_j)_{j\in \N}$ be
a sequence in $A^{(0,0)}(X)$ converging to $\psi$ with respect to the $L^2$-norm, it follows that
 \[\bigl(\phi_j:=(I+\Delta)^{-1}\psi_j\bigr)
_{j\in \N}\]
 converges to $\phi$ with respect to the $L^2$-norm. Since, $Q(\phi_j)=\Delta\phi_j=\psi_j-\phi_j$ for any $j\in \N$, hence
 $(Q(\phi_j))_{j\in\N}$
 converges to $\psi-\phi$. Then,
 \[
 Q(\phi)=\psi-\phi,
 \]

If $T$ is an extension of $\Delta$, that is a
linear selfadjoint  operator $T:\mathrm{Dom}(T)\lra
\h_0(X)$ such that: $\h_2(X)\subseteq\mathrm{Dom}(T)$ and
the restriction of $T$ to $\h_2(X)$ is $Q$. Pick $\phi=\sum_{j=0}^\infty a_j \phi_j$  in
$\h_0(X)$, then there exists $b_j\in \C$ for any $j\in \N$ such that:
\[
T\phi=\sum_{j=0}^\infty b_j \phi_j,
\]
We have,
\[
b_j=(T(\phi),\phi_j)=(\phi,T(\phi_j))=(\phi,Q(\phi_j))=\la_j(\phi,\phi_j)=\la_j a_j.
\]

Recall that $\|T\phi\|^2=\sum_{j=0}^\infty |b_j|^2<\infty $. From this we infer, $\sum_{j=0}^\infty \la_j^2|a_j|^2<\infty$, hence
$\phi\in \h_2(X)$. We conclude that $T=Q$. We say
$Q$ is a maximal selfadjoint extension of $\Delta$.\\

 Our goal now is to construct $Q_\infty$, a maximal selfadjoint extension of the operator $\Delta_{X,\infty}$. Let
 $(h_{X,u})_{u>1}$ be as before and let $(\Delta_{X,u})_{u>1}$ be the sequence of the associated Laplacians.

From \eqref{deltacompactoperateur},   the sequence $\bigl((I+\Delta_{X,u})^{-1} \bigr)_{u>1}$ converges to $(I+\Delta_{X,\infty})^{-1}$, with respect to one $L^2_{X,u_0}$-norm hence
 for  any $L^2_{X,v}$-norm with $v$ fixed.  We have also that  $(I+\Delta_{X,\infty})^{-1}$ is a compact linear operator on $\h_0(X)$. Note that $(I+\Delta_{X,u})^{-1}\h_0(X)$
 does not depend on $u$. Indeed,  this follows from the
 previous  discussion, and the fact that the metrics are uniformly equivalent. Then,
  \[\h_2(X)=(I+\Delta_{X,\infty})^{-1}\h_0(X).\]

  \begin{claim}\label{suiteconverge}
  Let $H$ be a Hilbert space. Let $(\vc_u)_{u\geq 1}$ be a sequence of uniformly equivalent hilbertian norms on $H$, converging
  to $\vc_\infty$, a Hilbert norm on $H$.

  Let $(\eta_u)_{u\geq 1}$ and $(\eta'_u)_{u\geq 1}$ two sequences in $H$, converging respectively to $\eta_\infty$ and
  $\eta'_\infty$ with respect, hence any norm $\vc_v$ with $v\geq 1$. Then, the complex sequence $\bigl((\eta_u,\eta'_u)_u \bigr)_{u\geq 1}$ converges
  to  $(\eta_\infty,\eta'_\infty)_\infty$.
    \end{claim}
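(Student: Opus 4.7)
The plan is to use the polarization identity together with the Cauchy--Schwarz inequality and the uniform equivalence of the norms, splitting the problem into a ``moving vectors, fixed norm'' part and a ``fixed vectors, moving norm'' part.

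First I would write the telescoping decomposition
\[
(\eta_u,\eta'_u)_u - (\eta_\infty,\eta'_\infty)_\infty = \bigl[(\eta_u,\eta'_u)_u - (\eta_\infty,\eta'_\infty)_u\bigr] + \bigl[(\eta_\infty,\eta'_\infty)_u - (\eta_\infty,\eta'_\infty)_\infty\bigr],
\]
and treat the two brackets separately.

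For the first bracket, expanding by sesquilinearity and applying the Cauchy--Schwarz inequality in the Hilbert space $\bigl(H,(\cdot,\cdot)_u\bigr)$ gives
\[
\bigl|(\eta_u,\eta'_u)_u - (\eta_\infty,\eta'_\infty)_u\bigr| \leq \|\eta_u-\eta_\infty\|_u\,\|\eta'_u\|_u + \|\eta_\infty\|_u\,\|\eta'_u-\eta'_\infty\|_u.
\]
By uniform equivalence, each $\|\cdot\|_u$ is controlled above and below by a fixed reference norm $\|\cdot\|_{1}$ by a constant independent of $u$. Consequently $\|\eta'_u\|_u$ stays bounded because $(\eta'_u)$ is norm-convergent, $\|\eta_\infty\|_u$ is uniformly bounded, and $\|\eta_u-\eta_\infty\|_u$ and $\|\eta'_u-\eta'_\infty\|_u$ tend to zero by hypothesis. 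Hence the first bracket vanishes in the limit.

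For the second bracket, the key tool is the polarization identity
\[
(x,y)_u = \frac{1}{4}\sum_{k=0}^{3} i^k \bigl\|x+i^k y\bigr\|_u^2,
\]
which holds in every Hilbert space $\bigl(H,(\cdot,\cdot)_u\bigr)$, including $u=\infty$. Applied with the \emph{fixed} pair $(x,y)=(\eta_\infty,\eta'_\infty)$, the convergence of the second bracket is reduced to the statement $\|z\|_u^2 \to \|z\|_\infty^2$ at the four fixed vectors $z=\eta_\infty+i^k\eta'_\infty$, $k=0,1,2,3$. This is precisely the (pointwise) convergence of the norms $\|\cdot\|_u\to \|\cdot\|_\infty$ assumed in the statement.

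The anticipated main obstacle is purely one of bookkeeping: verifying that ``uniform equivalence plus pointwise convergence of norms'' is indeed enough to deduce convergence of inner products along moving pairs of vectors. Polarization neutralises the moving-norm difficulty, while Cauchy--Schwarz combined with uniform equivalence handles the moving-vector difficulty; no deeper analytic input is required.
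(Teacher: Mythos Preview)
Your proof is correct and follows essentially the same approach as the paper: the same telescoping decomposition into a ``moving vectors'' term handled by Cauchy--Schwarz plus uniform equivalence, and a ``fixed vectors, moving norm'' term. The only difference is that the paper simply asserts $(\eta_\infty,\eta'_\infty)_u \to (\eta_\infty,\eta'_\infty)_\infty$ from the convergence $\|\cdot\|_u \to \|\cdot\|_\infty$, whereas you make this step rigorous via the polarization identity; this is a welcome clarification rather than a different argument.
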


  \begin{proof}
  We have,
  \[ (\eta_u,\eta'_u)_u-(\eta_\infty,\eta'_\infty)_\infty=(\eta_u-\eta_\infty,\eta'_u)_u+(\eta_\infty,\eta'_u-\eta_\infty')_u+(\eta_\infty,\eta'_\infty)_u-(\eta_\infty,\eta'_\infty)_\infty.
  \]
  Now, using the assumptions, there exists a constant $M$ such that $|(\eta_u-\eta_\infty,\eta'_u)_u+(\eta_\infty,\eta'_u-\eta_\infty')_u |\leq M \bigl(
  \|\eta_u-\eta_\infty\|_\infty+\|\eta_u'-\eta_\infty'\|_\infty\bigr)$, and since $(\vc_u)_{u\geq 1}$ converges to $\vc_\infty$ we
  conclude that $\bigl( (\eta_\infty,\eta'_\infty)_u\bigr)_{u\geq 1}$ converges to $(\eta_\infty,\eta'_\infty)_\infty$.

  \end{proof}

Let $\phi\in \h_2(X)$. There exists a $\psi \in \h_0(X)$ such that
 $\phi=(I+\Delta_{X,\infty})^{-1}\psi$. We claim that $\psi$ is unique and we will prove the uniqueness later (see the proof of
 \eqref{invertibleoperator}). Then we define $Q_\infty$, an extension of $\Delta_{X,\infty}$  as follows: Let $\phi\in \h_2$,
 so, by assumption,  there exists a unique $\psi\in \h_0(X) $ such that $\phi=(I+\Delta_{X,\infty})^{-1}\psi$,  we set:
 \[
 Q_\infty(\phi):=\psi-\phi,
 \]
Let us check that $Q_\infty$ is a positive selfadjoint extension of $\Delta_{X,\infty}$.  To establish the positivity of $Q_\infty$ we need the following claim: There exists a sequence $(\phi_u)_{u>1}$ in $\h_2(X)$ such
that $(\phi_u)_{u>1}$ converges to $\phi$ with respect to any $L^2$-norm, and such that $(Q_u(\phi_u))_{u>1}$ converges to
$Q_\infty(\phi)$. Indeed, Let $\phi_u:=(I+\Delta_{X,u})^{-1}\psi$, $\forall u>1$. We have
 \[
  \bigl((I+\Delta_{X,u})^{-1}\psi\bigr)_{u\geq 1}\xrightarrow[u\mapsto \infty]{} (I+\Delta_{X,\infty})^{-1}\psi\quad\text{see}\;
  \eqref{deltacompactoperateur}
 \]

Note that $Q_u(\phi_u)=\psi -\phi_u$ (see \eqref{Qfi}), which   converges to $\psi-\phi=Q_\infty(\phi)$ . Now, recall that $Q_u$ is
a positive
operator with respect to $(,)_u$. Namely, $(Q_u(\phi_u),\phi_u)_u\geq 0$. Since  $\bigl((,)_u\bigr)_{u\geq 1}$ converges
uniformly to $(,)_\infty$ and according to the previous claim \eqref{suiteconverge}. We conclude that
\[
(Q_\infty(\phi),\phi)_\infty\geq 0.
\]
Using the same argument, we prove that $Q_\infty$ is selfadjoint.\\

 Let $\phi\in A^{(0,0)}(X)$. By \eqref{laplaceTX}, the following element $\psi:=(I+\Delta_{X,\infty})\phi$ belongs
 to $ \h_0(X)$
, and
 \[
 (I+\Delta_{X,u})^{-1}\psi\xrightarrow[u\mapsto \infty]{} (I+\Delta_{X,\infty})^{-1}\psi,
 \]
 therefore,
 \[
 Q_\infty(\phi)=\psi-\phi=\Delta_{X,\infty} \phi.
 \]

Let $T$ be an extension of $Q_\infty$, that is a positive selfadjoint linear operator $T:\mathrm{Dom}(T)\lra \h_0(X)$ such
that $\h_2(X)\subseteq \mathrm{Dom}(T)$ and $T_{|_{\h_2(X)}}=Q_\infty$. Let $\phi\in \mathrm{Dom}(T)$, put $\psi:=(I+T)\phi$. We
have $\psi\in \h_0(X)$, thus $\theta:=(I+\Delta_{X,\infty})^{-1}\psi\in \h_2(X)$. Hence,
\[
(I+T)(\theta)=\theta+Q_\infty(\theta)=\theta+(\psi-\theta)=\psi.
\]
But recall that $\psi=(I+T)\phi$, then
\[
(T+I)(\theta-\phi)=0.
\]
Since $T$ is a positive operator, and so is $T+I$, it follows that
\[
\phi=\theta=(I+\Delta_{X,\infty})^{-1}\psi.
\]
Therefore,
\[
\mathrm{Dom}(T)=\h_2(X)\quad \text{and}\quad T=Q_\infty.
\]
So $Q_\infty$ is a maximal positive selfadjoint extension for $\Delta_{X,\infty}$.

\begin{theorem}\label{invertibleoperator} The operator $\Delta_{X,\infty}$ admits a maximal selfadjoint extension to $\h_2(X)$, we denote this extension
also by $\Delta_{X,\infty}$. We have:
\[
(I+\Delta_{X,\infty})(I+\Delta_{X,\infty})^{-1}=I,
\]
on $\h_0(X)$, where $I$ is the operator identity of $\h_0(X)$.
\[
(I+\Delta_{X,\infty})^{-1}(I+\Delta_{X,\infty})=I,
\]
on $\h_2(X)$, where   $I$ is the operator identity of $\h_2(X)$.
\end{theorem}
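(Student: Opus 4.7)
The theorem essentially wraps up the construction developed in the paragraphs preceding its statement: an extension $Q_\infty$ of $\Delta_{X,\infty}$ to $\h_2(X) = (I+\Delta_{X,\infty})^{-1}\h_0(X)$ has already been built by the formula $Q_\infty(\phi) = \psi - \phi$ whenever $\phi = (I+\Delta_{X,\infty})^{-1}\psi$, and its positivity, selfadjointness and maximality have been verified modulo the uniqueness of $\psi$, which was flagged as something to be proved here. The plan is to first close this uniqueness gap and then read off the two displayed identities directly from the construction.

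For uniqueness, write $K := (I+\Delta_{X,\infty})^{-1}$ and $K_u := (I+\Delta_{X,u})^{-1}$; the goal is $\ker K = \{0\}$. First I would show that $K$ is selfadjoint with respect to $(\cdot,\cdot)_{L^2,\infty}$. Since each $K_u$ is selfadjoint with respect to $(\cdot,\cdot)_{L^2,u}$, the identity $(K_u\psi_1,\psi_2)_u = (\psi_1,K_u\psi_2)_u$ holds for all $\psi_1,\psi_2 \in A^{(0,0)}(X)$. Invoking Theorem \eqref{deltacompactoperateur}, Claim \eqref{suiteconverge}, and the uniform equivalence of the $L^2_{X,u}$-norms, I let $u\to\infty$ and obtain $(K\psi_1,\psi_2)_\infty = (\psi_1,K\psi_2)_\infty$, which extends by density to all of $\h_0(X)$.

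Second, I would show that $A^{(0,0)}(X) \subseteq \mathrm{Range}(K)$. Fix $\xi\in A^{(0,0)}(X)$ and set $\psi := (I+\Delta_{X,\infty})\xi\in\h_0(X)$ (by Theorem \eqref{laplaceTX}) and $\psi_u := (I+\Delta_{X,u})\xi$. The explicit local expression of $\Delta_{X,u}$ together with the uniform convergence $h_{X,u}\to h_{X,\infty}$ gives $\Delta_{X,u}\xi\to\Delta_{X,\infty}\xi$ in the $L^2_\infty$-norm, hence $\psi_u\to\psi$. Decomposing
\[
K\psi - \xi = (K - K_u)\psi + K_u(\psi - \psi_u),
\]
the first term tends to zero by Theorem \eqref{deltacompactoperateur}, while the second is controlled by a uniform bound $\|K_u\|_{L^2,\infty}\leq C$ inherited from the estimate $\|K_u\|_{L^2,u}\leq 1$ in \eqref{normeleq1} and uniform equivalence of norms. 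Thus $K\psi=\xi$, so $A^{(0,0)}(X)\subseteq\mathrm{Range}(K)$ is dense in $\h_0(X)$, and selfadjointness forces $\ker K = (\mathrm{Range}\,K)^\perp = \{0\}$.

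With uniqueness established, $Q_\infty$ is unambiguously defined and the construction summarized earlier delivers the maximal selfadjoint extension, which we rename $\Delta_{X,\infty}$. The two identities follow at once: for $\psi\in\h_0(X)$, setting $\phi := K\psi\in\h_2(X)$, the definition of $Q_\infty$ together with \eqref{Qfi} gives $(I+\Delta_{X,\infty})K\psi = \phi + (\psi - \phi) = \psi$; and for $\phi\in\h_2(X)$ written uniquely as $\phi = K\psi$ one has $(I+\Delta_{X,\infty})\phi = \psi$, whence $K(I+\Delta_{X,\infty})\phi = K\psi = \phi$. The main obstacle in this plan is the selfadjointness step, since it demands passage to the limit of inner products in varying but uniformly equivalent Hilbert structures—exactly the subtle point encapsulated in Claim \eqref{suiteconverge}—after which everything else is essentially bookkeeping.
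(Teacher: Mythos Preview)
Your argument is correct and takes a genuinely different route from the paper. The paper establishes $(I+\Delta_{X,\infty})K\psi=\psi$ by a weak formulation: it pairs the left-hand side against test functions $\xi\in A^{(0,0)}(X)$, moves $(I+\Delta_{X,\infty})$ across the pairing, and then passes to the limit $\bigl((\Delta_{X,u}+I)^{-1}\psi,(\Delta_{X,u}+I)\xi\bigr)_{L^2,u}=(\psi,\xi)_{L^2,u}\to(\psi,\xi)_{L^2,\infty}$, concluding by density of $A^{(0,0)}(X)$ in $\h_0(X)$. The second identity is then obtained from the first by another selfadjointness manipulation. Your approach instead attacks injectivity of $K$ directly: you first show $K$ is selfadjoint for $(\cdot,\cdot)_{L^2,\infty}$ as a limit of the selfadjoint $K_u$, then show $A^{(0,0)}(X)\subseteq\mathrm{Range}(K)$ via the telescoping $K\psi-\xi=(K-K_u)\psi+K_u(\psi-\psi_u)$, and conclude $\ker K=(\mathrm{Range}\,K)^\perp=\{0\}$. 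What you gain is logical cleanliness: your argument never needs to apply $(I+\Delta_{X,\infty})$ to an element of $\h_2(X)$ before $Q_\infty$ is known to be well-defined, whereas the paper's first displayed equality tacitly uses the extension whose well-definedness is precisely what is being proved. What the paper's approach buys is that the same limit computation also yields the second identity essentially for free, while in your version the two identities are read off separately from the definition of $Q_\infty$. Note also that your convergence $\Delta_{X,u}\xi\to\Delta_{X,\infty}\xi$ in $L^2_\infty$, stated informally, is exactly what the paper proves carefully in its proof via Proposition~\eqref{bornelapbelt} and the integrability of $\delta_X$.
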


\begin{proof}
The first assertion follows from the previous discussion.

Recall that we supposed there exists a unique $\psi\in \h_0(X)$ such that $\phi=(I+\Delta_{X,\infty})^{-1}\psi$. Let us prove this
point. It suffices to prove the following:
\[
(I+\Delta_{X,\infty})(I+\Delta_{X,\infty})^{-1}=I,
\]
on $\h_0(X)$.\\

We fix $\xi\in A^{(0,0)}(X)$. We have,
\[
\underset{u\mapsto \infty}{\lim}\bigl\| \Delta_{{}_{X,u}}\xi \bigr\|^2_{L^2,u}=\bigl\|
\Delta_{{}_{X,\infty}}\xi
\bigr\|^2_{L^2,\infty}<\infty\quad \text{see}\; \eqref{laplaceTX},
\]
and
\[
 \biggl\| \frac{\pt \Delta_{{}_{X,u}}}{\pt u}\xi \biggr\|_{L^2,u}\leq \delta_X(u)\bigl\|\Delta_{{}_{X,u}}\xi\bigr\|_{L^2,u}\quad
 \text{see}\; \eqref{bornelapbelt}.
\]
We deduce, (for fixed $\xi$), there exists $C$, a constant  such that:
\[
 \biggl\| \frac{\pt \Delta_{{}_{X,u}}}{\pt u}\xi \biggr\|_{L^2,u}\leq C\delta_X(u),
\]
for $u\gg 1$.  Remember that the different norms $L^2_{X,u}$ are uniformly equivalent, so we can find $C'$ such that:
\[
 \biggl\| \frac{\pt \Delta_{{}_{X,u}}}{\pt u}\xi \biggr\|_{L^2,\infty}\leq C'\delta_X(u),
\]
Therefore,
\[
\bigl\| \Delta_{{}_{X,p}}\xi-\Delta_{{}_{X,q}}\xi \bigr\|_{L^2,\infty}\leq C'\int_p^q\delta_X(u)du,
\]
Thus $(\Delta_{{}_{X,p}}\xi)_{p\in \N}$ converges to $\Delta_{{}_{X,\infty}}\xi$ with respect to
$L^2_{X,\infty}$.\\

Now, let $\psi\in \h_0(X)$ and $\xi\in A^{(0,0)}(X)$. Using \eqref{suiteconverge}, we have
\begin{align*}
\bigl((\Delta_{X,\infty}+I)(\Delta_{X,\infty}+I)^{-1}\psi,\xi
\bigr)_{L^2,\infty}&=\bigl((\Delta_{X,\infty}+I)^{-1}\psi, (\Delta_{X,\infty}+I)\xi
\bigr)_{L^2,\infty}\\
&=\lim_{u\mapsto \infty}\bigl((\Delta_{X,u}+I)^{-1}\psi, (\Delta_{X,u}+I)\xi
\bigr)_{L^2,u}\\
&=\lim_{u\mapsto \infty}\bigl(\psi, \xi  \bigr)_{L^2,u}\\
&=\bigl(\psi, \xi  \bigr)_{L^2,\infty}.
\end{align*}
So, we have proved that for any $\psi\in \h_0(X)$,
\[
\bigl((\Delta_{X,\infty}+I)(\Delta_{X,\infty}+I)^{-1}\psi-\psi,\xi
\bigr)_{L^2,\infty}=0\quad \forall \, \xi\in A^{(0,0)}(X).
\]
To conclude, recall that if $D$ is dense linear subspace of a Hilbert space $(H,(,)_H)$, and suppose
there exists $v\in H $ such
that $(v,z)_H=0$ for all $z\in D$, so $v=0$. Indeed, take  $(z_j)_{j\in \N}$ a sequence in $D$
converging to $v$. We have
$(v,v)_H=\lim_{j\mapsto \infty}(v,z_j)_H=0$.\\

Applying this claim to $H=\h_0(X)$, $D=A^{(0,0)}(X)$ and $v=(\Delta_{X,\infty}+I)(\Delta_{X,\infty}+I)^{-1}\psi-\psi$.
It follows that,
\[
(\Delta_{X,\infty}+I)(\Delta_{X,\infty}+I)^{-1}=I.
\]
on $\h_0(X)$.\\

Now, let us prove the last assertion of the theorem. Let $\xi \in \h_2(X)$ and $\psi \in \h_0(X)$, we have
\begin{align*}
\bigl((\Delta_{X,\infty}+I)^{-1}(\Delta_{X,\infty}+I)\xi,\psi
\bigr)_{L^2,\infty}&=\bigl((\Delta_{X,\infty}+I)\xi, (\Delta_{X,\infty}+I)^{-1}\psi
\bigr)_{L^2,\infty}\\
&=\bigl(\xi, (\Delta_{X,\infty}+I)(\Delta_{X,\infty}+I)^{-1}\psi
\bigr)_{L^2,\infty} \\
&=\bigl(\xi, \psi  \bigr)_{L^2,\infty}.
\end{align*}
Then,
\[
(\Delta_{X,\infty}+I)^{-1}(\Delta_{X,\infty}+I)=I,
\]
on $\h_0(X)$.\\

\end{proof}

\begin{Corollaire}
  $\Delta_{{}_{X,\infty}}$ has an infinite positive discrete spectrum.
\end{Corollaire}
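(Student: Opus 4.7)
The plan is to read off the spectrum of $\Delta_{X,\infty}$ from that of its compact resolvent $(I+\Delta_{X,\infty})^{-1}$, and then to appeal to the spectral theorem for compact selfadjoint operators.

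First I would check that the operator $R:=(I+\Delta_{X,\infty})^{-1}:\h_0(X)\lra \h_0(X)$ produced in Theorem \ref{deltacompactoperateur} is, in addition to being compact, selfadjoint and of trivial kernel. Selfadjointness can be obtained by passing to the limit in
\[
\bigl((I+\Delta_{X,u})^{-1}\psi,\psi'\bigr)_{L^2,u}=\bigl(\psi,(I+\Delta_{X,u})^{-1}\psi'\bigr)_{L^2,u},
\]
using Claim \ref{suiteconverge} together with the uniform $L^2_{X,u}$-convergence of Theorem \ref{deltacompactoperateur}; alternatively one invokes the selfadjointness of the extension $Q_\infty=\Delta_{X,\infty}$ on $\h_2(X)$ established in \ref{invertibleoperator}. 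Injectivity of $R$ is immediate from $(I+\Delta_{X,\infty})(I+\Delta_{X,\infty})^{-1}=I$ on $\h_0(X)$, again from \ref{invertibleoperator}.

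Next I would apply the spectral theorem for compact selfadjoint operators to $R$. This yields a complete orthonormal system $(\phi_k)_{k\in \N}$ of $\h_0(X)$ made of eigenvectors of $R$, with real eigenvalues $(\mu_k)_{k\in \N}$ whose only possible accumulation point is $0$; moreover each eigenspace is finite-dimensional. Because $\h_0(X)$ is infinite-dimensional and $R$ is injective, infinitely many $\mu_k$ are nonzero, and in fact $\mu_k\to 0$. Each relation $R\phi_k=\mu_k\phi_k$ with $\mu_k\neq 0$ forces $\phi_k=(I+\Delta_{X,\infty})^{-1}(\phi_k/\mu_k)\in \h_2(X)$, so one may apply $I+\Delta_{X,\infty}$ and obtain
\[
\Delta_{X,\infty}\phi_k=\Bigl(\frac{1}{\mu_k}-1\Bigr)\phi_k=:\la_{\infty,k}\,\phi_k.
\]
The positivity statement of Theorem \ref{laplaceTX} (extended to $\h_2(X)$ via $Q_\infty$) gives $\la_{\infty,k}\geq 0$, and since $\mu_k\to 0^+$ we get $\la_{\infty,k}\to+\infty$. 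This shows the spectrum of $\Delta_{X,\infty}$ is a discrete, positive, infinite subset of $\R_{\geq 0}$ with finite-dimensional eigenspaces.

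The main technical point I expect to have to be careful about is the selfadjointness of the limit operator $R$: it has to be proved directly at the level of $\h_0(X)$ (not merely on the dense subspace $A^{(0,0)}(X)$), which is exactly where Claim \ref{suiteconverge} and the uniform equivalence of the $L^2_{X,u}$-norms do the work. Once this is in place the rest is a routine application of the compact selfadjoint spectral theorem and the bijection $\mu\leftrightarrow 1/\mu-1$ between the nonzero spectrum of $R$ and the spectrum of $\Delta_{X,\infty}$.
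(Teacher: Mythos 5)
Your proposal is correct and follows essentially the same route as the paper, which simply cites a textbook spectral theorem for the compact selfadjoint resolvent $(I+\Delta_{X,\infty})^{-1}$ and deduces positivity from the positivity of $\Delta_{X,\infty}$. You merely spell out the details (selfadjointness and injectivity of the resolvent, the correspondence $\la=1/\mu-1$) that the paper delegates to the reference.
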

\begin{proof}
The existence of the spectrum and  its  nature  follows, for instance, from \cite[theorem 3.4 p.429]{functional}. The positivity is a
consequence of the positivity of the operator $\Delta_{{}_{X,\infty}}$.
\end{proof}

\begin{theorem}
$\Delta_{{}_{X,\infty}}$ admit a heat kernel, we denote it by  $e^{-t\Delta_{{}_{X,\infty}}}$, $t>0$.
\end{theorem}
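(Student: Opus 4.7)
The plan is to construct $e^{-t\Delta_{X,\infty}}$ directly from the spectral decomposition. By the preceding corollary, the maximal selfadjoint extension $\Delta_{X,\infty}\colon \h_2(X)\to \h_0(X)$ of theorem \eqref{invertibleoperator} is a positive selfadjoint operator on $\h_0(X)$ with infinite discrete spectrum $0\leq \la_{\infty,0}\leq \la_{\infty,1}\leq\ldots \to +\infty$. As $(I+\Delta_{X,\infty})^{-1}$ is a compact selfadjoint operator on $\h_0(X)$ by theorem \eqref{deltacompactoperateur}, the spectral theorem furnishes a complete orthonormal system $(\phi_k)_{k\geq 0}$ of eigenvectors in $\h_0(X)$ for $\Delta_{X,\infty}$ with $\Delta_{X,\infty}\phi_k=\la_{\infty,k}\phi_k$. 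Every $\xi\in\h_0(X)$ can therefore be expanded as $\xi=\sum_{k}(\xi,\phi_k)_{L^2,\infty}\phi_k$.

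The first step is to define, for each $t>0$,
\[
e^{-t\Delta_{X,\infty}}\xi \;:=\; \sum_{k\geq 0} e^{-t\la_{\infty,k}}\,(\xi,\phi_k)_{L^2,\infty}\,\phi_k,
\]
and to check that the series converges in $\h_0(X)$. Since $|e^{-t\la_{\infty,k}}|\leq 1$, Parseval's identity gives
\[
\bigl\| e^{-t\Delta_{X,\infty}}\xi\bigr\|^2_{L^2,\infty} \;=\; \sum_{k\geq 0} e^{-2t\la_{\infty,k}}\bigl|(\xi,\phi_k)_{L^2,\infty}\bigr|^2 \;\leq\; \|\xi\|^2_{L^2,\infty},
\]
so the operator is well defined with norm bounded by $1$. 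Because $e^{-t\la_{\infty,k}}\to 0$ as $k\to\infty$, $e^{-t\Delta_{X,\infty}}$ is a norm limit of finite-rank operators, hence compact on $\h_0(X)$.

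The second step is to verify the defining properties of a heat kernel. Using the rapid decay $\la_{\infty,k}^m e^{-t\la_{\infty,k}}\to 0$ for every $m$, the image of $e^{-t\Delta_{X,\infty}}$ lies in $\h_2(X)$ (one checks $\sum_k\la_{\infty,k}^2 e^{-2t\la_{\infty,k}}|(\xi,\phi_k)_{L^2,\infty}|^2<\infty$), and termwise differentiation yields
\[
\tfrac{d}{dt}\,e^{-t\Delta_{X,\infty}}\xi \;=\; -\Delta_{X,\infty}\,e^{-t\Delta_{X,\infty}}\xi,
\]
in $\h_0(X)$ for every $t>0$. The semigroup property $e^{-s\Delta_{X,\infty}}\circ e^{-t\Delta_{X,\infty}}=e^{-(s+t)\Delta_{X,\infty}}$ is immediate from the spectral expression, and the dominated convergence theorem applied to $\sum_k(1-e^{-t\la_{\infty,k}})^2|(\xi,\phi_k)_{L^2,\infty}|^2$ gives the initial condition $\lim_{t\to 0^+}e^{-t\Delta_{X,\infty}}\xi=\xi$ strongly in $\h_0(X)$.

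The main obstacle is not in the formal construction, which is standard functional calculus for positive compact resolvents, but in making sure that all the ingredients we need -- discreteness and positivity of the spectrum, completeness of the eigenbasis and selfadjointness of $\Delta_{X,\infty}$ on $\h_2(X)$ -- have already been secured by theorem \eqref{invertibleoperator} and the preceding corollary. Once this is in place, uniqueness of the heat kernel associated to $\Delta_{X,\infty}$ follows from the standard argument: any two selfadjoint solutions of $\partial_t U=-\Delta_{X,\infty} U$ with $U(0)=I$ coincide by taking their difference and evaluating against $\phi_k$, yielding a scalar ODE with zero initial data. This uniqueness will be the key input for the forthcoming convergence statement $e^{-t\Delta_{X,u}}\to e^{-t\Delta_{X,\infty}}$.
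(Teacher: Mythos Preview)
Your argument is correct, but it takes a different route from the paper. The paper's proof is a one-line appeal to the abstract semigroup result recorded as \eqref{semi} in the appendix: since $\Delta_{X,\infty}$ has already been shown to be positive and selfadjoint (via theorem \eqref{invertibleoperator} and the subsequent corollary), the operator $-\Delta_{X,\infty}$ generates a contraction semigroup $e^{-t\Delta_{X,\infty}}$, and that is all. You instead build the heat kernel by hand from the spectral expansion, exploiting the compactness of $(I+\Delta_{X,\infty})^{-1}$ from theorem \eqref{deltacompactoperateur} to secure a complete orthonormal eigenbasis and then defining $e^{-t\Delta_{X,\infty}}$ termwise. Both are valid; the paper's route is shorter and uses only positivity and selfadjointness, while your explicit construction is more self-contained and immediately delivers compactness of $e^{-t\Delta_{X,\infty}}$ --- a fact the paper obtains only later, in theorem \eqref{variationEu}, as a byproduct of the convergence $e^{-t\Delta_{X,u}}\to e^{-t\Delta_{X,\infty}}$. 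Your spectral formula also makes the uniqueness argument you sketch entirely transparent.
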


\begin{proof}
We proved that $\Delta_{{}_{X,\infty}}$ is a positive selfadjoint operator. Then from \eqref{semi}, we deduce that $\Delta_{{}_{X,\infty}}$ generates a  semi-group $e^{-t\Delta_{{}_{X,\infty}}}$ for any $t>0$.
\end{proof}

\begin{proposition}
There exists  $(h_{X,u})_{u>1}$ a family of  smooth hermitian metrics on $TX$ converging uniformly to $h_{X,\infty}$, such
that  for any fixed
$t>0$:
\[
\biggl\|\frac{\partial}{\partial u}e^{-t\Delta_{X,u}}\biggr\|_{L^2,\infty}=O\bigl(\delta_X(u)\bigr)\quad \forall \, u\gg 1.
\]
\end{proposition}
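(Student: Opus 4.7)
The plan is to apply Duhamel's formula for the heat semigroup and remove the resulting $1/s$ singularity at $s=0$ by splitting the integration interval at $t/2$ and integrating by parts in $s$.

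For each $u>1$, since $h_{X,u}$ is smooth, $\Delta_{X,u}$ is a positive selfadjoint elliptic operator generating an analytic contraction semigroup, and the standard perturbation identity reads
\[
\frac{\pt}{\pt u}e^{-t\Delta_{X,u}}=-\int_0^t e^{-(t-s)\Delta_{X,u}}\,\frac{\pt \Delta_{X,u}}{\pt u}\,e^{-s\Delta_{X,u}}\,ds.
\]
Recall from the proof of \eqref{bornelapbelt} the factorization $\frac{\pt \Delta_{X,u}}{\pt u}=\phi_u\,\Delta_{X,u}$, where $\phi_u:=\frac{\pt}{\pt u}\log h_{X,u}(\dif,\dif)^{-1}$ is continuous on $X$ with $\sup_X|\phi_u|\leq \delta_X(u)$. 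In particular the associated multiplication operator on $\h_0(X)$ has operator norm at most $\delta_X(u)$.

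Split the integral at $s=t/2$. On $[t/2,t]$, the spectral bound $\sup_{\la\geq 0}\la e^{-s\la}=1/(es)\leq 2/(et)$ together with $\|e^{-(t-s)\Delta_{X,u}}\|\leq 1$ (by positivity) gives
\[
\Bigl\|e^{-(t-s)\Delta_{X,u}}\phi_u\Delta_{X,u}e^{-s\Delta_{X,u}}\Bigr\|_{L^2,u}\leq \frac{2\delta_X(u)}{et},
\]
so this half contributes $O(\delta_X(u))$ for fixed $t$. On $[0,t/2]$ the same estimate blows up as $s\to 0$; here I write $\Delta_{X,u}e^{-s\Delta_{X,u}}=-\frac{\pt}{\pt s}e^{-s\Delta_{X,u}}$ and integrate by parts. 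The boundary contributions $e^{-t\Delta_{X,u}}\phi_u$ and $-e^{-(t/2)\Delta_{X,u}}\phi_u e^{-(t/2)\Delta_{X,u}}$ have operator norm at most $\delta_X(u)$, and the residual integral becomes
\[
\int_0^{t/2}\Delta_{X,u}\,e^{-(t-s)\Delta_{X,u}}\,\phi_u\,e^{-s\Delta_{X,u}}\,ds,
\]
whose singular point has been pushed to $s=t$, outside the interval of integration. Since $t-s\geq t/2$ throughout $[0,t/2]$, $\|\Delta_{X,u}e^{-(t-s)\Delta_{X,u}}\|\leq 2/(et)$, and this residual integral is also $O(\delta_X(u))$.

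Summing the two halves yields $\|\partial_u e^{-t\Delta_{X,u}}\|_{L^2,u}=O(\delta_X(u))$ with implicit constant depending only on $t$, and the uniform equivalence of the norms $(\|\cdot\|_{L^2,u})_{u\geq 1}$ transfers the estimate to the $\|\cdot\|_{L^2,\infty}$ operator norm. The main obstacle is precisely the nonintegrable $1/s$ singularity that a naive operator-norm bound of the Duhamel integrand produces; the interior splitting at $t/2$ combined with integration by parts circumvents it by shifting the unbounded factor $\Delta_{X,u}$ off the semigroup evaluated at a short time and onto the semigroup evaluated at a time bounded below by $t/2$, where it is regularized.
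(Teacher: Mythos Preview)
Your proof is correct and follows essentially the same route as the paper: Duhamel's formula, the factorization $\partial_u\Delta_{X,u}=\phi_u\Delta_{X,u}$, the spectral bound $\|\Delta_{X,u}e^{-s\Delta_{X,u}}\|\leq (es)^{-1}$, splitting at $t/2$, and integration by parts on $[0,t/2]$ to shift the unbounded factor onto the long-time semigroup. The only cosmetic differences are that the paper transfers to the $L^2_{X,\infty}$ norm before estimating (rather than after, as you do) and, after integration by parts, performs the change of variable $s\mapsto t-s$ instead of directly using $t-s\geq t/2$; neither affects the argument.
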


\begin{proof}
We have,
\[
\begin{split}
\frac{\partial }{\partial u}e^{-t\Delta^u}&=\int_{0}^t e^{-(t-s)\Delta^u}\bigl((\partial_u\log h_{X,u}) \Delta^u\bigr)e^{-s\Delta^u}ds\quad \text{by}\; \eqref{heatkernel}\\
&=-\int_{0}^t e^{-(t-s)\Delta^u}\bigl(\partial_u\log h_{X,u} \bigr)\pt_se^{-s\Delta^u}ds.\\
\end{split}
\]
 For fixed $u>0$, let $\bigl(\phi_{u,k}\bigl)_{k\in \N}$ be an orthonormal basis  with respect of  $L^2_{X,u}$, as
an example, we can choose a set of eigenvectors of  $\Delta_{{}_{X,u}}$. Let  $\xi\in A^{(0,0)}(X)$,   there exist real $a_k$ for any $k\in \N$ such that $\xi=\sum_{k\in \N}a_{u,k} \phi_{u,k}$. We have,
\[
\begin{split}
\partial_t e^{-t\Delta_{{}_{X,u}}}\xi&=-\sum_{k\in\N} a_{u,k}\lambda_{u,k} e^{-\lambda_{u,k} t}\phi_{u,k}\\
&=-\frac{1}{t}\sum_{k} a_{u,k}t\lambda_{u,k} e^{-\lambda_{u,k}t}\phi_{u,k},
\end{split}
\]

Since $a^2 e^{-a}\leq 4e^{-2}, \forall a\geq 0 $, it follows that
\[
\begin{split}
\Bigl\|\partial_t e^{-t\Delta_{{}_{X,u}}} \xi \Bigr\|^2_{L^2,u}&=\frac{1}{t^2}\sum_{k\in \N} a_{u,k}^2\bigl(\lambda_{u,k} t\bigr)^2e^{-2\lambda_{u,k}t }\\
&\leq \frac{e^{-2}}{t^2}\sum_{k\in \N}a_{u,k}^2\\
&=\frac{e^{-2}}{t^2}\bigl\|\xi\bigr\|_{L^2,u}^2,
\end{split}
\]
Therefore,
\[
\Bigl\|\partial_t e^{-t\Delta_{{}_{X,u}} }\Bigl\|_{L^2,u}\leq \frac{e^{-1}}{t} \quad \forall\, t>0.
\]

Using the fact that all the norms are uniformly  equivalent, we can find two  constants $M_1$ and $M_2$ which are  independent of $u$ such that:
\[
\Bigl\|\partial_te^{-t\Delta_{{}_{X,u}}}\Bigr\|_{L^2,\infty}\leq \frac{M_1}{t} \quad \forall\, u,\;\forall \, t>0.
\]
and
\[
\bigl\| e^{-s\Delta_{X,u}}\bigr\|_{L^2,\infty}\leq M_2\quad \forall\, s\geq 0.
\]
Fix $t>0$. for any $u\gg 1$
\begin{align*}
\biggl\|\int_{\frac{t}{2}}^t e^{-(t-s)\Delta^u}\bigl((\partial_u\log h_{X,u}) \bigr)\partial_se^{-s\Delta^u}ds\biggr\|_{L^2,\infty}&\leq \int_{\frac{t}{2}}^t\Bigl\| e^{-(t-s)\Delta^u}\bigl((\partial_u\log h_{X,u}) \bigr)\partial_se^{-s\Delta^u} \Bigr\|_{L^2,\infty}ds \\
&\leq \int_{\frac{t}{2}}^t\delta_{X}(u) \bigl\| e^{-(t-s)\Delta_{X,u}}\bigr\|_{L^2,\infty} \bigl\| \pt_s e^{-s\Delta_{X,u}}\bigr\|_{L^2,\infty}\\
&\leq M_1 M_2\delta_X(u)\int_{\frac{t}{2}}^t \frac{1}{s}ds\\
&=O(\delta_X(u))
\end{align*}

Using an integration by parts, we get
{\allowdisplaybreaks
\begin{align*}
\int_{0}^{\frac{t}{2}} &e^{-(t-s)\Delta^u}\bigl((\partial_u\log h_{X,u}) \bigr)\partial_se^{-s\Delta^u}ds\\ &=\Bigl[e^{-(t-s)\Delta_{{}_{X,u}}}(\pt_u\log h_{X,u})e^{-s\Delta_{{}_{X,u}}} \Bigr]_0^{\frac{t}{2}}
-\int_0^{\frac{t}{2}}\pt_s( e^{-(t-s)\Delta_{{}_{X,u}}})\bigl(\pt_u\log h_{X,u}\bigr) e^{-s\Delta_{{}_{X,u}}}ds     \\
&=e^{-\frac{t}{2}\Delta_{{}_{X,u}}}\bigl(\pt_u \log h_{X,u}\bigr) e^{-\frac{t}{2}\Delta_{{}_{X,u}}}-e^{-t\Delta_{{}_{X,u}}}\bigl(\pt_u \log h_{X,u}\bigr) I- \int_{0}^\frac{t}{2} \partial_s (\mathcal{O}^{-(t-s)\Delta^u})\bigl(\partial_u\log h_{X,u} \bigr)e^{-s\Delta^u}ds\\
&=e^{-\frac{t}{2}\Delta_{{}_{X,u}}}(\pt_u \log h_{X,u}) e^{-\frac{t}{2}\Delta_{{}_{X,u}}}-e^{-t\Delta_{{}_{X,u}}}(\pt_u \log h_{X,u}) I+ \int_{\frac{t}{2}}^t \partial_s (\mathcal{O}^{-s\Delta^u})\bigl(\partial_u\log h_{X,u} \bigr)e^{-(t-s)\Delta^u}ds,
\end{align*}}

Then there exists a constant $M_3$ such that:
\[
\begin{split}
\biggl\|\int_{0}^{\frac{t}{2}} e^{-(t-s)\Delta^u}\bigl((\partial_u\log h_{X,u})
\bigr)\partial_se^{-s\Delta^u}ds\biggr\|_{L^2,\infty}&\leq  M_3\delta_X(u).
\end{split}
\]
We conclude that:
\begin{equation}\label{key}
\biggl\|\frac{\partial}{\partial u} e^{-t\Delta_{{}_{X,u}}}\biggr\|_{L^2,\infty}=O\bigl(\delta_X(u)\bigr)\quad \forall\, u\gg 1.
\end{equation}

\end{proof}

As an application of the previous results, we show that  $e^{-t\Delta_{X,\infty}}$ can be approximate by a
sequence of heat kernels
associated to smooth metrics.
\begin{theorem}\label{variationEu}
 for any $t>0$, we have:
\[
 \bigl(e^{-t\Delta_{{}_{X,u}}}\bigr)_u\xrightarrow[u\to+\infty]{}e^{-t\Delta_{X,\infty}},
\]
In particular, $e^{-t\Delta_{{}_{X,\infty}}}$ is a compact operator from $\h_0(X)$ to $\h_0(X)$.
\end{theorem}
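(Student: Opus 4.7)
The plan is to establish operator-norm convergence of the sequence $\bigl(e^{-t\Delta_{X,u}}\bigr)_{u\geq 1}$ for each fixed $t>0$, and then to identify the limit with $e^{-t\Delta_{X,\infty}}$ by invoking uniqueness of the heat semigroup generated by a positive selfadjoint operator.

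For the convergence step I would use the estimate of the preceding proposition, namely
\[
\biggl\|\frac{\partial}{\partial u}e^{-t\Delta_{X,u}}\biggr\|_{L^2,\infty}=O\bigl(\delta_X(u)\bigr)\quad \forall\, u\gg 1.
\]
Combined with \eqref{deltaXU} and the uniform convergence of $(h_{X,p})_{p\in\N}$ to $h_{X,\infty}$, I may assume, as in the proof of Theorem \eqref{deltacompactoperateur}, that $\delta_X(u)=O(1/u^2)$. Writing $e^{-t\Delta_{X,q}}-e^{-t\Delta_{X,p}}=\int_p^q \tfrac{\pt}{\pt u}e^{-t\Delta_{X,u}}\,du$ and taking norms yields
\[
\bigl\|e^{-t\Delta_{X,p}}-e^{-t\Delta_{X,q}}\bigr\|_{L^2,\infty}\leq \int_p^q O(1/u^2)\,du=O(1/p)\quad \forall\, 1\ll p<q,
\]
so $(e^{-t\Delta_{X,u}})_u$ is Cauchy in operator norm and converges to a bounded operator $E(t)$ on $\h_0(X)$. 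Each $e^{-t\Delta_{X,u}}$ is compact (in the smooth case, $\Delta_{X,u}$ has compact resolvent, hence $e^{-t\Delta_{X,u}}$, being a continuous function of it vanishing at infinity, is also compact), so the operator-norm limit $E(t)$ is compact, which yields the ``in particular'' assertion.

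To identify $E(t)$ with $e^{-t\Delta_{X,\infty}}$, I would pass to the limit in the defining properties of a heat semigroup. The semigroup relation $E(t+s)=E(t)E(s)$ and $E(0)=I$ follow directly from their smooth counterparts. The delicate point is that $(E(t))_{t\geq 0}$ has $-\Delta_{X,\infty}$ as infinitesimal generator. For this I would exploit the Laplace-transform identity
\[
\int_0^\infty e^{-t}e^{-t\Delta_{X,u}}\,dt=(I+\Delta_{X,u})^{-1},
\]
and pass to the limit as $u\to\infty$: Theorem \eqref{deltacompactoperateur} controls the right-hand side, while a uniform operator-norm bound on $e^{-t\Delta_{X,u}}$ (uniform equivalence of the $L^2_{X,u}$-norms together with the analogue of \eqref{normeleq1} for the heat semigroup) combined with the pointwise-in-$t$ convergence established above justifies a dominated-convergence argument on the left. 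This yields $\int_0^\infty e^{-t}E(t)\,dt=(I+\Delta_{X,\infty})^{-1}$, and together with the semigroup property and uniqueness of the Hille--Yosida generator this forces $E(t)=e^{-t\Delta_{X,\infty}}$ for the maximal positive selfadjoint extension from Theorem \eqref{invertibleoperator}.

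The main obstacle is this last identification of the generator: norm convergence $e^{-t\Delta_{X,u}}\to E(t)$ does not, by itself, transfer generator identities, and one must lean on the previously proven resolvent convergence to anchor the limit semigroup to the correct selfadjoint operator. Once the Laplace-transform equality above is secured, the remaining conclusions---compactness of $E(t)$ and its identification with the heat kernel of $\Delta_{X,\infty}$---follow at once.
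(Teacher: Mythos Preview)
Your convergence step (Cauchy estimate from the derivative bound, limit compact by closedness of compacts under norm limits) is exactly what the paper does. The difference lies in the identification of the limit $E(t)$ with $e^{-t\Delta_{X,\infty}}$.

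The paper does not pass through the Laplace transform or Hille--Yosida. Instead it exploits the \emph{pointwise multiplicative relation} between the Laplacians: since $\Delta_{X,\infty}=\frac{h_{X,\infty}}{h_{X,u}}\,\Delta_{X,u}$, one computes
\[
(\partial_t+\Delta_{X,\infty})\,e^{-t\Delta_{X,u}}
=\Bigl(1-\tfrac{h_{X,\infty}}{h_{X,u}}\Bigr)\,\partial_t e^{-t\Delta_{X,u}}
=O(1/u)\cdot\partial_t e^{-t\Delta_{X,u}},
\]
and since $\|\partial_t e^{-t\Delta_{X,u}}\|\le e^{-1}/t$ is bounded for $t\ge t_0$, the right-hand side tends to $0$ in norm. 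Hence the limit $L_t$ solves the heat equation for $\Delta_{X,\infty}$ with $L_t\to I$ as $t\to 0$, and uniqueness of the heat semigroup finishes. This is more elementary: it needs no semigroup generation theory beyond uniqueness, and it uses directly the scalar relation between the two Laplacians that is special to this setting.

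Your resolvent route is also correct and has the appeal of recycling Theorem~\ref{deltacompactoperateur}: once $\int_0^\infty e^{-t}E(t)\,dt=(I+\Delta_{X,\infty})^{-1}$, the generator of $E$ is forced. One point you should make explicit is the strong continuity $E(t)\to I$ as $t\to 0$, which is needed to invoke the Hille--Yosida identification; it follows because the estimate $\|\tfrac{\partial}{\partial u}e^{-t\Delta_{X,u}}\|=O(\delta_X(u))$ is actually \emph{uniform in $t>0$} (inspect the constants in the preceding proposition), so the convergence $e^{-t\Delta_{X,u}}\to E(t)$ is uniform in $t$ and strong continuity passes to the limit. With that said, both arguments are complete.
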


\begin{proof}
By \eqref{key} and \eqref{operateurcompact}, the sequence $\bigl(e^{-t\Delta_{{}_{X,u}}}\bigr)_{u>1}$ converges to
a limit, which will be denoted  by $L_t$, for any $t>0$. According to  \eqref{operateurcompact}, $L_t$ is a compact operator.

 By assumption, we may assume that $1-\frac{h_{X,\infty}}{h_{X,u}}=O(\frac{1}{u})$ for any  $u\gg 1$. On the other hand, we have $\Delta_{X,\infty}=\frac{h_{X,\infty}}{h_{X,u}}\Delta_{X,u}$. Then
\[
 \begin{split} \bigl(\partial_t+\Delta_{{}_{X,\infty}}\bigr)e^{-t\Delta_{{}_{X,u}}}&=\bigl(\partial_t+\frac{h_{X,\infty}}{h_{X,u}}\Delta_{{}_{X,u}}\bigr)e^{-t\Delta_{{}_{X,u}}}\\
&=\Bigl(1-\frac{h_{X,\infty}}{h_{X,u}}\Bigr)\,\partial_te^{-t\Delta_{{}_{X,u}}}\\
&=O\bigl(\frac{1}{u}\bigr)\,\partial_t e^{-t\Delta_{{}_{X,u}}}.
 \end{split}
\]

If we  fix  $t_0>0$, we have  shown that $\|\partial_t e^{-t\Delta_{{}_{X,u}}}\|_{L^2,\infty}$ is bounded for any $t\geq
t_0$.   It follows that
\[
 \bigl(\partial_t+\Delta_{{}_{X,\infty}}\bigr)L_t=0 \quad \forall\, t\geq t_0.
\]
Moreover $L_t\rightarrow I$ when $t\mapsto 0$. Since  $e^{-t\Delta_{{}_{X,\infty}}}$ satisfies the same properties, and
by the  uniqueness  of the heat kernel solution, it follows that $L_t=e^{-t\Delta_{{}_{X,\infty}}}$, for any $t>0$.
\end{proof}

\subsection{The trace and the Zeta function associated to $\Delta_{X,\infty}$}\label{tracezetainfty}

For any $u\in [1,\infty]$, we  consider the norm $L^2_{X,u}$ on
$A^{(0,0)}(X)$, we recall that this norm is induced by
$h_{X,u}$ and $h_{\mathcal{O}}$ (a constant metric on $\mathcal{O}$). An operator $T$ on the
completion  of $A^{(0,0)}(X)$ with respect to
$L^2_{X,u}$ is said to be of trace-class if the sum
$\sum_{k\in \N}\bigl(T\xi_{u,k},\xi_{u,k} \bigr)_{L^2,u},
$ is absolutely convergent for one, hence any orthonormal
basis $\bigl(\xi_{u,k}\bigr)_{k\in \N}$. The value
of this sum, which is independent of the choice of the
basis, is called the trace of $T$, and will be denoted
here by $\mathrm{Tr}_u(T)$.\\

For any $u\in [1,\infty[$, we consider the operator $P^u e^{-t \Delta_{X,u}}$, where $P^u$  is the orthogonal projection with
kernel  equal to  $H^0(X,\mathcal{O})$, with respect to $L^2_{X,u}$. We will need the following lemma, which describes  the variation of
$(P^u)_{u\geq
1}$ with respect to $u$ sufficiently large:
\begin{lemma}\label{variationPu}
We have, $P^u$ is a
bounded operator and
\[
\frac{\pt P^u}{\pt u}=O(\delta_X(u)),\quad \forall u\gg 1.
\]
\end{lemma}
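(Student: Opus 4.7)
The plan is to exploit the fact that $\mathcal{O}$ is the trivial line bundle on the compact Riemann surface $X$, so its global holomorphic sections reduce to the constants: $H^0(X,\mathcal{O})=\mathbb{C}\cdot 1$. Consequently I would begin by writing $P^u=I-\Pi^u$, where $\Pi^u$ is the rank-one orthogonal projection onto the constants with respect to $L^2_{X,u}$, given by the explicit formula
\[
\Pi^u\phi=\frac{(\phi,1)_{L^2,u}}{V_u}\cdot 1,\qquad V_u:=(1,1)_{L^2,u}=\int_X\omega_{X,u}.
\]
Boundedness of $P^u$ on $\h_0(X)$ is then immediate: as an orthogonal projection it has $L^2_{X,u}$-operator norm at most $1$, and the uniform equivalence of the norms $L^2_{X,u}$ for $u\in[1,\infty]$ established at the beginning of the section transfers this bound to $L^2_{X,\infty}$, with a constant independent of $u$.

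For the derivative, I would first note the local identity $\pt_u\omega_{X,u}=\bigl(\pt_u\log h_{X,u}(\dif,\dif)\bigr)\omega_{X,u}$, which is a direct consequence of the formula $\omega_{X,u}=\frac{i}{2\pi}h_{X,u}(\dif,\dif)\,dz\wedge d\z$. By the very definition of $\delta_X(u)$ this yields the pointwise bound $|\pt_u\omega_{X,u}|\leq \delta_X(u)\,\omega_{X,u}$, whence $|\pt_u V_u|\leq \delta_X(u)V_u$ and, via Cauchy--Schwarz,
\[
\bigl|\pt_u(\phi,1)_{L^2,u}\bigr|=\Bigl|\int_X\phi\,\pt_u\omega_{X,u}\Bigr|\leq \delta_X(u)\,V_u^{1/2}\|\phi\|_{L^2,u}.
\]
Applying the quotient rule to $\Pi^u\phi$ and combining these two estimates yields $\|\pt_u\Pi^u\phi\|_{L^2,u}\leq C\delta_X(u)\|\phi\|_{L^2,u}$ for a constant $C$ independent of $u$ (using that $V_u$ is bounded above and below away from $0$ uniformly in $u$, by uniform equivalence of the norms); then uniform equivalence transfers this to $\|\pt_uP^u\|_{L^2,\infty}=O(\delta_X(u))$, which is the desired bound.

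There is no serious obstacle here. The only point requiring a moment's thought is that $\delta_X(u)$, defined as the pointwise supremum of $|\pt_u\log h_{X,u}(\dif,\dif)|$, is precisely the scalar quantity controlling $\pt_u\omega_{X,u}$; once this is noted, the proof reduces to the two one-line estimates above glued by the quotient rule.
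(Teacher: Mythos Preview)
Your proposal is correct and follows essentially the same route as the paper: both write $P^u$ as the identity minus the rank-one projection onto the constants, differentiate the explicit formula $\Pi^u\phi=(\phi,1)_{L^2,u}/(1,1)_{L^2,u}$ via the quotient rule, bound the derivatives of the inner products using $\pt_u\omega_{X,u}=(\pt_u\log h_{X,u})\,\omega_{X,u}$ together with Cauchy--Schwarz and the definition of $\delta_X(u)$, and then transfer the bound to $L^2_{X,\infty}$ by uniform equivalence of the norms. Your phrasing of boundedness (orthogonal projection has norm $\le 1$ in $L^2_{X,u}$, then transfer) is slightly cleaner than the paper's, but otherwise the arguments coincide.
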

\begin{proof}
 Let $1< u<\infty$. By definition of $P^u$, we have for any $\xi\in A^{0,0}(X)$, there exists
$a^{(u)}(\xi)$ a  complex number, such that
\[
P^u\xi=\xi+a^{(u)}(\xi)\footnote{We view $a^{(u)}(\xi)$ as an element in $A^{(0,0)}(X)$.}.
\]
Then $a^{(u)}(\xi)$ satisfies:
\[
  a^{(u)}(\xi)=-\frac{\bigl( \xi,1\bigr)_{L^2,u}}{\bigl(1,1\bigr)_{L^2,u}}.
\]
We have
\[
 \frac{\pt P^{(u)}}{\pt u}\xi=\frac{\pt a^{(u)}(\xi)}{\pt u}=-\bigl(1,1\bigr)_{L^2,u}^{-1}\frac{\pt}{\pt u}\bigl( \xi,1\bigr)_{L^2,u}+\bigl( \xi,1\bigr)_{L^2,u}\bigl(1,1\bigr)_{L^2,u}^{-2}\frac{\pt}{\pt u} \bigl( 1,1\bigr)_{L^2,u}.
\]
Since $(h_{X,u})_u$ converges to $h_{X,\infty}$ uniformly, then  $\Bigl(\bigl( 1,1\bigr)_{L^2,u}^{-1}\Bigr)_u$ is bounded. Then,
there exist  constants $m,m'>0$  such that:
\[
\bigl\|P^u\xi \bigr\|_{L^2,\infty}\leq \|\xi\|_{L^2,\infty}+m'\|\xi\|_{L^2,u},
\]
then $P^u$ is bounded with respect to $L^2_{X,\infty}$, and
\[
 \Bigl\|\frac{\pt P^{(u)}}{\pt u}\xi\Bigr\|_{L^2,\infty}\leq m\Bigl\|\frac{\pt}{\pt u}\bigl( \xi,1\bigr)_{L^2,u}\Bigr\|_{L^2,\infty}+m\|\xi\|_{L^2,u}\Bigl\|\frac{\pt}{\pt u} \bigl( 1,1\bigr)_{L^2,u}\Bigr\|_{L^2,u}\quad \forall\, u>1.
\]

Let  $\xi,\eta\in A^{(0,0)}(X)$, we have
\begin{align*}
 \Bigl|\frac{\pt}{\pt u}\bigl(\xi,\eta \bigr)_{L^2,u}\Bigr|&=\Bigl|\int_X \xi\,\overline{\eta}\frac{\pt }{\pt
 u}\bigl(\omega_{X,u}\bigr)\Bigr|\\
&=\Bigl|\int_X  \xi\,\overline{\eta}\Bigl(\frac{\pt }{\pt u}\log h_{X,u}\Bigr)\omega_{X,u}\Bigr|\\
&=\Bigl|\Bigl( \xi,\bigl(\frac{\pt}{\pt u}\log h_{X,u}\bigr)\eta\Bigr)_{L^2,u}\Bigr|\\
&\leq \delta_X(u) \bigl\|\xi\bigr\|_{L^2,u}\bigl\|\eta\bigr\|_{L^2,u}\quad \text{by Cauchy-Schwartz inequality}.
\end{align*}
Recall that the metrics are uniformly equivalent, we deduce there exists a constant $m''$ such that:
\[
 \Bigl\|\frac{\pt P^{(u)}}{\pt u}\xi\Bigr\|_{L^2,\infty}\leq m''\,\delta_X(u) \bigl\|\xi\bigr\|_{L^2,\infty}\quad \forall\, u>1.
\]
Then,
\[
 \frac{\pt P^u}{\pt u}=O(\delta_X(u)),\quad u\gg 1.
\]
\end{proof}
We need also this technical lemma:
 \begin{lemma}\label{liminfsomme}
  Let $\{c_{n,i}: n\in \N, i\in N \}$ be a family of positive real. We have
  \[
 \underset{n\mapsto \infty}{\liminf}\sum_{i}c_{n,i}\geq \sum_i \underset{n\mapsto \infty}{\liminf}c_{n,i}.
\]

 \end{lemma}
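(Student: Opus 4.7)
The statement is the classical Fatou lemma for the counting measure on $\N$, so I would prove it by a truncation argument that reduces the countable sum to finite sums, where the superadditivity of $\liminf$ applies directly.

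First I would fix an arbitrary integer $N \in \N$ and observe that, since every $c_{n,i}$ is nonnegative,
\[
\sum_{i=0}^{\infty} c_{n,i} \geq \sum_{i=0}^{N} c_{n,i} \quad \forall\, n\in \N.
\]
Taking $\liminf_{n\to \infty}$ on both sides preserves the inequality, so
\[
\liminf_{n\to \infty}\sum_{i=0}^{\infty} c_{n,i} \geq \liminf_{n\to \infty}\sum_{i=0}^{N} c_{n,i}.
\]

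Next I would exploit the elementary (super)additivity of $\liminf$ on finite sums: for any two sequences of reals $(a_n),(b_n)$, one has $\liminf_n(a_n+b_n) \geq \liminf_n a_n + \liminf_n b_n$. Iterating this $N$ times (which is legitimate because we are dealing with a fixed finite number of terms) yields
\[
\liminf_{n\to \infty}\sum_{i=0}^{N} c_{n,i} \geq \sum_{i=0}^{N}\liminf_{n\to \infty} c_{n,i}.
\]
Combining this with the previous inequality gives
\[
\liminf_{n\to \infty}\sum_{i=0}^{\infty} c_{n,i} \geq \sum_{i=0}^{N}\liminf_{n\to \infty} c_{n,i}, \quad \forall\, N\in \N.
\]

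Finally I would let $N \to \infty$ on the right-hand side. Since the partial sums $\sum_{i=0}^N \liminf_n c_{n,i}$ form a monotone increasing sequence of nonnegative extended reals, they converge to $\sum_{i=0}^\infty \liminf_n c_{n,i}$, and the desired inequality follows. There is no serious obstacle here; the only subtle point is that one must allow the sums to take the value $+\infty$, which poses no problem because the inequality is trivially preserved in that case. The argument is essentially Fatou's lemma applied to the counting measure on $\N$, and could alternatively be obtained by invoking that result directly.
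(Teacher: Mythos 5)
Your proof is correct and follows essentially the same route as the paper's: truncate the inner sum at a finite $N$, prove the inequality $\liminf_n\sum_{i\le N}c_{n,i}\ge\sum_{i\le N}\liminf_n c_{n,i}$ for the finite sum (the paper does this by bounding each term by $\inf_{l\ge n}c_{l,i}$, which is just an unpacked version of the superadditivity of $\liminf$ that you invoke), and then let $N\to\infty$ using positivity. No gaps.
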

\begin{proof}
 Let $N$ be an nonzero integer. We have
\[
 \sum_{i=1}^\infty c_{k,i}\geq \sum_{i=1}^N\inf_{l\geq n} c_{l,i},\quad \forall n\;\forall k\geq n
\]
then,
\[
 \inf_{k\geq n}\sum_{i=1}^\infty c_{k,i}\geq \sum_{i=1}^N\inf_{l\geq n} c_{l,i},\quad \forall n
\]
then,
\[
\underset{n}{ \liminf}\sum_i c_{n,i}\geq \sum_{i=1}^N\underset{n}{\liminf} c_{n,i}.
\]
Since all the terms are positive, we conclude that,
\[
\underset{n}{ \liminf}\sum_i c_{n,i}\geq \sum_{i}\underset{n}{\liminf} c_{n,i}.
\]
\end{proof}
We recall that when $u\geq 1$, the metric  $h_{X,u}$ is smooth, then  the spectral theory of generalized
Laplacians  states that the operator $P^u e^{-t \Delta_{X,u}}$ is  of trace-class,  for any $t>0$. One defines
the so-called Theta function given by $\theta_{X,u}(t)=\mathrm{Tr}_u(P^u e^{-t \Delta_{X,u}})$. If $(\la_{u,k})_{k\in \N}$ is the
set of eigenvalues of $\Delta_{X,u}$ counted with multiplicity and  in increasing order,  we have
\[
\theta_{X,u}(t)=\sum_{k\in \N^\ast}e^{-t\la_{u,k}}\quad \forall \, t>0,
\]
we refer the reader to chapter 2 of \cite{heat}.\\

The smoothness of the metrics is a necessary condition in order to use  the classical spectral theory of Laplacians. But, an
integrable metric may be singular, as in following example: Let $\p^1$ the complex projective line. Since
 $T\p^1\simeq \mathcal{O}(2)$, we can endow
$\p^1$ with the following metric:
\[
h_{\p^1,\infty}(s,s)([x_0:x_1]):=\frac{|s([x_0,x_1])|^2}{\max(|x_0|,|x_1|)^4}, \quad \forall \, [x_0,x_1]\in \p^1
\]
where $s$ is a local holomorphic section of $\mathcal{O}(2)$.  Then, we can show that $h_{\p^1,\infty}$ is a  singular integrable
metric.
 \begin{definition}
\rm{We set,
\[
 \theta_{X,\infty}(t):=\mathrm{Tr}\bigl(P^\infty e^{-t\Delta_{X,\infty}}\bigr)\quad \forall\, t>0,
\]
where $P^\infty$ is the orthogonal projection with
respect to $L^2_{X,\infty}$ with  $H^0(X,\mathcal{O})$ as kernel
.  We say $\theta_{X,\infty}$, is the Theta function
associated to $\Delta_{X,\infty}$. }
\end{definition}

\begin{theorem}\label{thetafinite}
 We have,
\[
 0\leq \theta_{X,\infty}(t)<\infty\quad \forall t>0,
\]
hence $P^\infty e^{-t\Delta_{X,\infty}}$ is a trace-class operator
for any $t>0$.
\end{theorem}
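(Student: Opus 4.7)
The plan is to deduce the finiteness of $\theta_{X,\infty}(t)$ from a Fatou-type comparison with the classical theta functions $\theta_{X,u}(t)$ of the smooth approximations, via Lemma \ref{liminfsomme}. Denote by $(\la_{u,k})_{k\geq 1}$ and $(\la_{\infty,k})_{k\geq 1}$ the nonzero eigenvalues of $\Delta_{X,u}$ and $\Delta_{X,\infty}$, arranged in increasing order with multiplicities. I will establish (i) that $\la_{u,k}\to\la_{\infty,k}$ as $u\to\infty$ for each fixed $k$, and (ii) a uniform lower bound $\la_{u,k}\geq c\,\la_{1,k}$ with $c>0$ independent of both $u$ and $k$.

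The observation driving both (i) and (ii) is that the Dirichlet form associated to $\Delta_{X,u}$ is \emph{independent of} $u$; reproducing the Stokes computation from the proof of Theorem \ref{laplaceTX} yields
\[
(\Delta_{X,u}\xi,\xi)_{L^2,u}=\frac{i}{2\pi}\int_X \frac{\pt \xi}{\pt \z}\frac{\pt \overline{\xi}}{\pt z}\,dz\wedge d\z=:D(\xi),\qquad \forall\,\xi\in A^{(0,0)}(X),
\]
valid for every $u$ in the family as well as for $u=\infty$. Only the denominator of the Rayleigh quotient $D(\xi)/\|\xi\|^2_{L^2,u}$ depends on $u$, and the $L^2_{X,u}$-norms are uniformly equivalent and converge to $L^2_{X,\infty}$. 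The min-max principle, applied to the positive selfadjoint operators $\Delta_{X,u}$ and to the maximal extension of $\Delta_{X,\infty}$ from subsection \ref{Freidrichs}, on the common dense subspace $A^{(0,0)}(X)$, then yields both (i) and (ii), with $c$ depending only on the uniform equivalence constants between $L^2_{X,1}$ and the other norms.

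Granted (ii), the classical spectral theory for the smooth metric $h_{X,1}$ gives $\theta_{X,1}(s)<\infty$ for any $s>0$, hence
\[
\theta_{X,u}(t)=\sum_{k\geq 1}e^{-t\la_{u,k}}\leq \sum_{k\geq 1}e^{-ct\la_{1,k}}=\theta_{X,1}(ct)<\infty
\]
uniformly in $u$. Combining this uniform bound with (i) via Lemma \ref{liminfsomme},
\[
\theta_{X,\infty}(t)=\sum_{k\geq 1}\liminf_{u\to\infty}e^{-t\la_{u,k}}\leq \liminf_{u\to\infty}\theta_{X,u}(t)\leq \theta_{X,1}(ct)<\infty,
\]
which is the desired conclusion; positivity is immediate from the nonnegativity of each summand, and the trace-class property of $P^\infty e^{-t\Delta_{X,\infty}}$ follows.

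The main obstacle is item (i): one must verify that the maximal positive selfadjoint extension of $\Delta_{X,\infty}$ constructed in subsection \ref{Freidrichs} is precisely the operator whose discrete spectrum is captured by the min-max on $A^{(0,0)}(X)$ for the form $D$ with denominator $(\cdot,\cdot)_{L^2,\infty}$. I expect this to follow from the operator-norm convergence $(I+\Delta_{X,u})^{-1}\to(I+\Delta_{X,\infty})^{-1}$ of Theorem \ref{deltacompactoperateur}: for compact selfadjoint operators, the selfadjointness being with respect to uniformly equivalent inner products, operator-norm convergence forces the eigenvalues $1/(1+\la_{u,k})$ to converge to $1/(1+\la_{\infty,k})$, which gives (i).
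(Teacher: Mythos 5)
Your proposal is correct in outline but follows a genuinely different route from the paper. For the uniform bound on $\theta_{X,u}(t)$, the paper runs a Gronwall-type estimate on $\al(u):=\mathrm{Tr}_\infty\bigl((\Delta_{X,u}+I)^{-2}\bigr)$, using Proposition \ref{proposition1}, the trace-norm inequalities \eqref{tracenorme1}, \eqref{normetrace} and the norm equivalence \eqref{equivtracenorme11}, and then dominates $e^{-ta}$ by $c_t(1+a)^{-2}$; you instead exploit the observation that the Dirichlet form $D(\xi)=(\Delta_{X,u}\xi,\xi)_{L^2,u}$ is independent of $u$ (which is indeed what the Stokes computation in Theorem \ref{laplaceTX} shows) and compare Rayleigh quotients via Courant--Fischer to get $\la_{u,k}\geq c\,\la_{1,k}$, hence the explicit majorant $\theta_{X,u}(t)\leq \theta_{X,1}(ct)$. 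This is more elementary and arguably more transparent than the paper's argument. For the passage to the limit, the paper avoids discussing eigenvalue convergence altogether: it proves the one-sided semicontinuity $\si_n(P^\infty e^{-t\Delta_{X,\infty}})_\infty\leq\liminf_u\si_n(P^ue^{-t\Delta_{X,u}})_\infty$ directly from the operator-norm convergences of Theorem \ref{variationEu} and Lemma \ref{variationPu}, and then applies Lemma \ref{liminfsomme} to the singular values; you apply Lemma \ref{liminfsomme} to the eigenvalue sums after asserting $\la_{u,k}\to\la_{\infty,k}$. Your step (i) is the thinnest point of the proposal, and it is worth noting that what you actually need is only the one-sided bound $\la_{\infty,k}\geq\limsup_u\la_{u,k}$, and that the clean way to extract even that from Theorem \ref{deltacompactoperateur} is precisely the paper's mechanism: the Lipschitz dependence of $\si_n$ on the operator norm, combined with \eqref{equivtracenorme11} to pass between the $u$-eigenvalues of $(I+\Delta_{X,u})^{-1}$ (selfadjoint only for $(\cdot,\cdot)_{L^2,u}$) and the singular values computed in the fixed $L^2_{X,\infty}$-norm. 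So the two proofs ultimately meet at the singular-value comparison; what your version buys is a simpler, self-contained uniform bound, while the paper's version buys a limit argument that never needs the full eigenvalue convergence.
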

\begin{proof}


From \eqref{tracenorme1},
\begin{equation}\label{n1}
\Bigl|\mathrm{Tr}_\infty\Bigl(\bigl(\Delta_{X,u}+I \bigr)^{-2}\Bigr)\Bigr|\leq \Bigl\|\bigl(\Delta_{X,u}+I \bigr)^{-2}\Bigr\|_{1,\infty}\quad \forall \, u\geq 1.
\end{equation}

and by \eqref{equivtracenorme11}, we have for any $0<\eps\ll 1$ and any $u\gg 1$:

\begin{equation}\label{n2}
\frac{1-\eps}{1+\eps}\Bigl\|\bigl(\Delta_{X,u}+I \bigr)^{-2}\Bigr\|_{1,u}\leq \Bigl\|\bigl(\Delta_{X,u}+I
\bigr)^{-2}\Bigr\|_{1,\infty}\leq \frac{1+\eps}{1-\eps}\Bigl\|\bigl(\Delta_{X,u}+I \bigr)^{-2}\Bigr\|_{1,u}.
\end{equation}

Also, we have:
\begin{equation}\label{n3}
\Bigl\|\bigl(\Delta_{X,u}+I \bigr)^{-2}\Bigr\|_{1,u}=\sum_{k\in \N }\frac{1}{(\la_{u,k}+1)^2}\leq \zeta_{X,u}(2)+1<\infty\quad
\forall\,u\geq 1,
\end{equation}
where $(\la_{u,k})_{k\in \N}$ is the set of eigenvalues of $\Delta_{X,u}$, ordered in increasing
order, and $\zeta_{X,u}$ is the Zeta function attached
to $\Delta_{X,u}$ which is finite on the set $\{s\in \C\, |\, \mathrm{Re}(s)>1 \}$, see \cite[§ 9.6]{heat}.

From \eqref{n1}, \eqref{n2} and \eqref{n3}, we conclude that  $\bigl(\Delta_{X,u}+I \bigr)^{-2} $ is
a  trace-class operator for the norm $L^2_{X,\infty}$, for any $u$ sufficiently large.
Hence $\mathrm{Tr}\bigl((\Delta_{X,u}+I \bigr)^{-2}\bigr)$ is finite.\\

By the Lidskii theorem,   (cf. \eqref{lidskii}) and since $\bigl(\Delta_{X,u}+I \bigr)^{-2} $ is a trace-class operator for
$L^2_{X,\infty}$, then
$\mathrm{Tr}_\infty\bigl((\Delta_{X,u}+I \bigr)^{-2}\bigr)$ is the sum of its eigenvalues counted with their multiplicity. But they
 are positive, hence
\[
 \mathrm{Tr}_\infty\bigl((\Delta_{X,u}+I \bigr)^{-2}\bigr)=\Bigl\|(\Delta_{X,u}+I \bigr)^{-2}\Bigr\|_{1,u}.
\]

We have
\[
\begin{split}
 \frac{\pt}{\pt u}\bigl(\Delta_{X,u}+I \bigr)^{-2}&=-(\Delta_{X,u}+I)^{-2}\frac{\pt}{\pt u}\bigl(\Delta_{X,u}+I
\bigr)^2 \bigl(\Delta_{X,u}+I \bigr)^{-2}\\
&=-(\Delta_{X,u}+I)^{-2}\frac{\pt \Delta_{X,u}}{\pt u} \bigl(\Delta_{X,u}+I
\bigr)^{-1}-(\Delta_{X,u}+I)^{-1}\frac{\pt \Delta_{X,u}}{\pt u} \bigl(\Delta_{X,u}+I \bigr)^{-2},
\end{split}
\]
hence, for $u$ sufficiently large
\[
 \begin{split}
\biggl|\frac{\pt}{\pt u}\mathrm{Tr}_{{}_\infty}\Bigl(\bigl(\Delta_{{}_{X,u}}+I \bigr)^{-2} \Bigr) \biggr|&=\biggl|
\mathrm{Tr}_{{}_\infty}\Bigl((\Delta_{{}_{X,u}}+I)^{{}}\frac{\pt \Delta_{{}_{X,u}}}{\pt u} \bigl(\Delta_{{}_{X,u}}+I
\bigr)^{{}^{-1}} \Bigr)
+\mathrm{Tr}_{{}_\infty}\Bigl((\Delta_{X,u}+I)^{{}^{-1}}\frac{\pt \Delta_{X,u}}{\pt u} \bigl(\Delta_{X,u}+I \bigr)^{{}}
\Bigr)\biggr|\\
&=2\biggl| \mathrm{Tr}_\infty\bigl((\Delta_{{}_{X,u}}+I)^{{}}\frac{\pt \Delta_{{}_{X,u}}}{\pt u}
\bigl(\Delta_{X,u}+I \bigr)^{-1} \Bigr)\biggr|\quad\text{by}\;\eqref{commutetrace11}\\
&\leq 2\Bigl\| (\Delta_{{}_{X,u}}+I)^{{}}\frac{\pt \Delta_{{}_{X,u}}}{\pt u}
\bigl(\Delta_{X,u}+I \bigr)^{-1} \Bigr\|_{1,\infty}\quad \text{by}\;\eqref{tracenorme1} \\
&\leq c_2\delta_X(u)\Bigl\|\bigl(\Delta_{X,u}+I \bigr)^{-2}\Bigr\|_{1,\infty}  \quad
\text{by}\;\eqref{ineqinfty1}\;\text{and}\; \eqref{normetrace}\\
&\leq  c\,\delta_X(u) \Bigl\|\bigl(\Delta_{X,u}+I \bigr)^{-2}\Bigr\|_{1,u}\quad \text{the existence
of }\,c\;\text{follows from}\; \eqref{n2}.
\end{split}
\]
We have proved then the following:
\[
\biggl|\frac{\pt}{\pt u}\mathrm{Tr}_{{}_\infty}\Bigl(\bigl(\Delta_{{}_{X,u}}+I \bigr)^{-2} \Bigr) \biggr|\leq  c\,\delta_X(u) \,
\mathrm{Tr}_{{}_\infty}\Bigl(\bigl(\Delta_{{}_{X,u}}+I \bigr)^{-2} \Bigr).
\]

If we set $\al(u):=\mathrm{Tr}_\infty\bigl((\Delta_{X,u}+I \bigr)^{-2}\bigr)$, then the last inequality becomes:
\[
 \Bigl| \frac{\pt}{\pt u}\al(u)\Bigr|\leq c\, \delta_X(u) \al(u)\quad \forall\, u>1.
\]
Thus,
\[
\biggl|\log \biggl(\frac{\al(u)}{\al(u')}\biggr) \biggr|\leq c\,\Bigl|\int_u^{u'}\delta_X(v)dv\Bigr|\quad \forall\,
u>1.
\]
From \eqref{deltaXU}, we can choose $(h_{X,u})_{u\geq 1}$  such that : $\delta_X(u)=O(\frac{1}{u^2})$ for any $u\gg 1$. Then,
\[
\biggl|\log \biggl(\frac{\al(u)}{\al(u')}\biggr) \biggr|\leq
\Bigl|\int_u^{u'}O(\frac{1}{v^2})dv\Bigr|=O(\bigl|\frac{1}{u}-\frac{1}{u'}\bigr|)\quad \forall\,
u,u'\gg 1,
\]
It follows   that $u\mapsto \al(u)$ is bounded on a interval of the form $[A,\infty[$, and we choose $A>1$.\\

For any $t>0$, there exists $c_t$ a constant such that:
\[
 e^{-t a}\leq \frac{c_t}{(1+a)^2}\quad \forall a>0.
\]
If we denote by $\theta_{X,u}$ the Theta function associated to $\Delta_{X,u}$, then
\[
 \theta_{X,u}(t)=\sum_{k= 1}^\infty e^{-t\la_{u,k}}\leq c_t \sum_{k=0}^\infty\frac{1}{(\la_{u,k}+1)^2}=c_t\,\al(u)\quad \forall\, u\geq 1.
\]
In particular, this inequality holds  for any $u\geq A$. Since, $u\mapsto \al(u)$ is bounded on $[A,\infty[$, we
conclude that for any $t>0$ fixed, the following sequence:
\[\bigl(\theta_{X,u}(t)\bigr)_{u\geq A},\] is bounded.\\

To establish the theorem, that is $P^\infty e^{-t\Delta_{X,\infty}}$ is a trace-class operator  for any $t>0$, it suffices
to find  $0<\eps\ll 1$ such that:
\[
 \theta_{X,\infty}(t)\leq \frac{1+\eps}{1-\eps}\liminf_{u\mapsto \infty}\theta_{X,u}(t)\quad \forall\, t>1,
\]
and since the right-hand side is bounded, this  yields to:
\[
 \theta_{X,\infty}(t)<\infty\quad \forall\, t>0.
\]
So it remains  to prove there exists  $0<\eps\ll 1$ such that:
\[
 \theta_{X,\infty}(t)\leq \frac{1+\eps}{1-\eps}\liminf_{u\mapsto \infty}\theta_{X,u}(t)\quad \forall\, t>1.
\]
In order to prove this claim, we start by  comparing $\si_n(P^\infty e^{-t\Delta_{X,\infty}})_\infty$\footnote{See
\eqref{defvaleursinguliere}
for the  definition of  $\si_n(\cdot)$ } with $\si_n(P^u
e^{-t\Delta_{X,u}})_\infty$ for any $u\gg 1$.  Let  $R$ be a finite-rank operator such that its rank  $\leq n$.
We fix t>0, we have
\begin{align*}
 \si_n(P^\infty e^{-t\Delta_{X,\infty}})_\infty&\leq \bigl\|P^\infty e^{-t\Delta_{X,\infty}}-R\bigr\|_{L^2,\infty}\\
 &\leq \bigl\|P^\infty e^{-t\Delta_{X,\infty}}-P^\infty e^{-t\Delta_{X,u}}\bigr\|_{L^2,\infty}+\bigl\|P^\infty e^{-t\Delta_{X,u}}-P^u e^{-t\Delta_{X,u}}\bigr\|_{L^2,\infty}+ \bigl\|P^u e^{-t\Delta_{X,u}}-R\bigr\|_{L^2,\infty}\\
 &\leq \bigl\|P^\infty\|_{L^2} \bigl\|e^{-t\Delta_{X,\infty}}-e^{-t\Delta_{X,u}}\bigr\|_{L^2,\infty}+\bigl\|P^\infty -P^u\bigr\|_{L^2,\infty}\bigl\|e^{-t\Delta_{X,u}}\bigr\|_{L^2,\infty}+  \bigl\|P^u e^{-t\Delta_{X,u}}-R\bigr\|_{L^2,\infty}.
\end{align*}
Since the last inequality holds for an arbitrary $R$, then
\[
 \si_n(P^\infty e^{-t\Delta_{X,\infty}})_\infty\leq  \bigl\|P^\infty\|_{L^2} \bigl\|e^{-t\Delta_{X,\infty}}-e^{-t\Delta_{X,u}}\bigr\|_{L^2,\infty}+\bigl\|P^\infty -P^u\bigr\|_{L^2,\infty}\bigl\|e^{-t\Delta_{X,u}}\bigr\|_{L^2,\infty}+  \si_n(P^u e^{-t\Delta_{X,u}})_\infty.
\]
According to \eqref{variationEu} and \eqref{variationPu},  we know that  $(\mathcal{O}^{-t\Delta_{X,u}})_{u\geq 1}$ (resp. $(P^u)_{u\geq 1}$ ) converges  to $ e^{-t\Delta_{X,\infty}}$ (resp. to
$P^\infty$) with respect to the norm $L^2_{X,\infty}$, and that the real sequence $\bigl(\bigl\|e^{-t\Delta_{X,u}}\bigr\|_{L^2,\infty}\bigr)_u$ is bounded, hence the previous
inequality yields to:
\begin{equation}\label{SSSSSX}
\si_n(P^\infty e^{-t\Delta_{X,\infty}})_\infty\leq
 \liminf_{u\mapsto \infty}\si_n(P^u e^{-t\Delta_{X,u}})_\infty.
\end{equation}
Therefore,
\begin{align*}
\theta_{X,\infty}(t)&=\mathrm{Tr}_\infty(P^\infty e^{-t\Delta_{X,\infty}})\\
&=\sum_{n\in \N}\si_n(P^\infty e^{-t\Delta_{X,\infty}})_\infty\quad \text{see the definition} \;\eqref{definitionTrace}\\
&\leq \sum_{n\in \N} \liminf_{u\mapsto \infty}\si_n(P^u e^{-t\Delta_{X,u}})_\infty\quad \text{by}\,\eqref{SSSSSX}\\
&\leq \liminf_{u\mapsto \infty}\sum_{n\in \N} \si_n(P^u e^{-t\Delta_{X,u}})_\infty\quad \text{by}\,\eqref{liminfsomme}\\
&=\liminf_{u\mapsto \infty}\|P^u e^{-t\Delta_{X,u}}\|_{1,\infty}\quad \text{see the definition} \;\eqref{definitionTrace}.
\end{align*}
So,
\begin{equation}\label{n4}
\theta_{X,\infty}(t)\leq \liminf_{u\mapsto \infty}\|P^u e^{-t\Delta_{X,u}}\|_{1,\infty}.
\end{equation}
Let $0<\eps<1$, using  \eqref{equivtracenorme11}, we get
\begin{equation}\label{n5}
\frac{1-\eps}{1+\eps}\|P^u e^{-t\Delta_{X,u}}\|_{1,\infty}\leq \|P^u e^{-t\Delta_{X,u}}\|_{1,u} \leq \frac{1+\eps}{1-\eps}\|P^u e^{-t\Delta_{X,u}}\|_{1,\infty}\quad \forall\, u\gg 1.
\end{equation}
Recall that
\begin{equation}\label{n6}
 \theta_{X,u}(t)=\bigl\|P^u e^{-t\Delta_{X,u}}\bigr\|_{1,u}.
\end{equation}
Finally, taking into account \eqref{n4},\eqref{n5} and \eqref{n6} we obtain:
\[
\theta_{X,\infty}(t)\leq \frac{1+\eps}{1-\eps}\liminf_{u\mapsto \infty}\theta_{X,u}(t).
\]
We conclude that for any $t>0$, the operator $P^\infty e^{-t\Delta_{X,\infty}}$ is  of trace-class.
\end{proof}

We denote by $(\la_{\infty,k})_{k\in \N}$ the sequence of the eigenvalues of $\Delta_{X,\infty}$  counted with their multiplicity,
and ordered in increasing order.
\begin{theorem}\label{key2}
For any $t>0$ fixed, we have:
\[
\bigl(\theta_{X,u}(t)\bigr)_{u\geq 1}\xrightarrow[u\mapsto \infty]{} \theta_{X,\infty}(t),
\]
and,
\[
\zeta_{X,\infty}(s):=\frac{1}{\Gamma(s)}\int_0^\infty t^{s-1}\theta_{X,\infty}(t)dt=\sum_{k=1}^\infty \frac{1}{\la_{\infty,k}^s},
\]
is finite for any $s\in \C$, such that $\mathrm{Re}(s)>1$. This
function of $s$ admits a meromorphic continuation to the
whole complex plane with a pole at $s=1$, and it is holomorphic at $s=0$. We have,
\[
\zeta'_{X,\infty}(0)=\int_1^\infty \frac{\theta_{X,\infty}(t)}{t}dt+\gamma b_{\infty,-1}-b_{\infty,0}+\int_0^1 \frac{\rho_{X,\infty}(t)}{t}dt=\lim_{u\mapsto \infty} \bigl(\zeta'_{X,u}(0)\bigr)_{u\geq 1},
\]
where $b_{\infty,-1}$, $b_{\infty,0}$ are real numbers and $\rho_{X,\infty}$ is a real function  such that $\theta_{X,\infty}(t)=\frac{b_{\infty,-1}}{t}+b_{\infty,0}
+ \rho_{X,\infty}(t)$, and  $\rho_{X,\infty}(t)=O(t)$ for $t>0$ sufficiently small.
\end{theorem}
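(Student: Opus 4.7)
The plan is to bootstrap from Theorem \ref{thetafinite} and the trace–norm machinery already developed. I would proceed in four stages.

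\textbf{Stage 1 (pointwise convergence of $\theta_{X,u}(t)$).} Inequality \eqref{SSSSSX}, combined with \eqref{n4}--\eqref{n6}, already gives $\theta_{X,\infty}(t) \le \frac{1+\eps}{1-\eps}\liminf_{u\to\infty}\theta_{X,u}(t)$ for every $\eps>0$. To upgrade this $\liminf$ to a genuine limit, I would exploit the operator convergence $e^{-t\Delta_{X,u}}\to e^{-t\Delta_{X,\infty}}$ of Theorem \eqref{variationEu} together with $P^u\to P^\infty$ from Lemma \eqref{variationPu}: these imply pointwise convergence $\si_n(P^u e^{-t\Delta_{X,u}})_\infty \to \si_n(P^\infty e^{-t\Delta_{X,\infty}})_\infty$ for every $n$. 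Using the uniform-in-$u$ domination $\si_n(P^u e^{-t\Delta_{X,u}})_\infty\le c_t/(1+\la_{u,n})^2$ (the bound $\theta_{X,u}(t)\le c_t\,\al(u)$ in the proof of \eqref{thetafinite}) and the boundedness of $\al(u)$ for $u$ large, dominated convergence on $\N$ then yields $\|P^u e^{-t\Delta_{X,u}}\|_{1,\infty}\to \|P^\infty e^{-t\Delta_{X,\infty}}\|_{1,\infty}$, and \eqref{equivtracenorme11} lets us pass back to the $L^2_{X,u}$-trace to obtain $\theta_{X,u}(t)\to\theta_{X,\infty}(t)$.

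\textbf{Stage 2 (small-$t$ asymptotics).} For every smooth $u\geq 1$, the classical heat-kernel asymptotic gives $\theta_{X,u}(t)=b_{u,-1}/t+b_{u,0}+\rho_{X,u}(t)$ with $\rho_{X,u}(t)=O(t)$. The key point is that the critical uniform lower bound on the first nonzero eigenvalue of $\Delta_{X,u}$ mentioned in the introduction, together with the uniform bounds on $\al(u)$ already established, provides control of the remainder $\rho_{X,u}(t)$ that is uniform in $u$ on a neighbourhood of $0$. Combined with the pointwise convergence of Stage 1, this lets me pass to the limit coefficientwise and produce the expansion $\theta_{X,\infty}(t)=b_{\infty,-1}/t+b_{\infty,0}+\rho_{X,\infty}(t)$ with $b_{\infty,\bullet}=\lim_u b_{u,\bullet}$ and $\rho_{X,\infty}(t)=O(t)$ near $0$.

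\textbf{Stage 3 (Mellin transform and meromorphic continuation).} For $\mathrm{Re}(s)>1$, I would split
\[
\Gamma(s)\zeta_{X,\infty}(s)=\int_0^1 t^{s-1}\theta_{X,\infty}(t)\,dt+\int_1^\infty t^{s-1}\theta_{X,\infty}(t)\,dt.
\]
The second piece is entire because $\theta_{X,\infty}(t)$ decays exponentially as $t\to\infty$ (since $\la_{\infty,1}>0$). In the first piece I substitute the expansion of Stage 2 and evaluate the elementary integrals, obtaining $b_{\infty,-1}/(s-1)+b_{\infty,0}/s+(\text{analytic for }\mathrm{Re}(s)>-1)$. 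This gives the meromorphic continuation with only a simple pole at $s=1$, and the simple zero of $\Gamma(s)^{-1}$ at $s=0$ kills the $1/s$ contribution, ensuring holomorphy at $s=0$. The spectral identity $\zeta_{X,\infty}(s)=\sum_k\la_{\infty,k}^{-s}$ on $\mathrm{Re}(s)>1$ follows by inserting the spectral decomposition of $e^{-t\Delta_{X,\infty}}$ (which exists by Theorem \eqref{thetafinite}) and applying $\int_0^\infty t^{s-1}e^{-t\la}dt=\Gamma(s)\la^{-s}$ term by term.

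\textbf{Stage 4 (value at $s=0$ and convergence of $\zeta'_{X,u}(0)$).} Differentiating the representation of Stage 3 at $s=0$ and using the Laurent expansion of $\Gamma$ around $0$ (which contributes the $\gamma b_{\infty,-1}$ term) yields the stated formula for $\zeta'_{X,\infty}(0)$. The convergence $\zeta'_{X,u}(0)\to\zeta'_{X,\infty}(0)$ is then obtained by applying dominated convergence to each of the three explicit pieces: Stage 1 handles the integrands pointwise, while the uniform-in-$u$ $O(t)$ bound on $\rho_{X,u}$ from Stage 2 provides the required integrable majorant on $[0,1]$, and the exponential decay from the uniform first-eigenvalue bound handles $[1,\infty)$. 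The hardest part, and the real content of the theorem, is Stage 2: transferring the classical short-time asymptotics across the whole family $(h_{X,u})_{u\geq 1}$ uniformly requires the critical eigenvalue bound and is what makes the limit expansion for $\theta_{X,\infty}$ meaningful at all.
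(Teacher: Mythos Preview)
Your Stages 3 and 4 are standard and match the paper's final steps. The problems are in Stages 1 and 2.

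\textbf{Stage 1.} The domination you propose does not work. You have $\si_n(P^u e^{-t\Delta_{X,u}})_\infty \le C\,c_t/(1+\la_{u,n})^2$, but this bound depends on $u$ through $\la_{u,n}$; knowing only that the sums $\al(u)=\sum_n 1/(1+\la_{u,n})^2$ stay bounded does \emph{not} give a $u$-independent summable majorant $g(n)$, so dominated convergence on $\N$ is not justified. (Think of $\si_n=\delta_{n,[u]}$: each term goes to $0$ but the sum stays $1$.) The paper avoids this trap entirely: it first proves that $\bigl(\zeta_{X,u}(s)\bigr)_u$ converges uniformly on strips $\al\le\mathrm{Re}(s)\le\beta$ via a trace--norm Cauchy estimate and a Gronwall-type argument on the auxiliary functions $\zeta_{u,\infty}(s)=\Gamma(s)^{-1}\int_0^\infty t^{s-1}\|P^u e^{-t\Delta_{X,u}}\|_{1,\infty}\,dt$, and only then recovers pointwise convergence of $\theta_{X,u}(t)$ from the inequality $\theta(a)\le \Gamma(s+1)a^{-s}\zeta(s)$ together with the trace-norm identities \eqref{thetainfty11}--\eqref{thetainfty12} (which exploit the factorisation $e^{-2t\Delta}=e^{-t\Delta}e^{-t\Delta}$ and the already-known trace-class property of $e^{-t\Delta_{X,\infty}}$).

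\textbf{Stage 2.} This is the serious gap. The claim that the uniform first-eigenvalue lower bound together with boundedness of $\al(u)$ yields a uniform-in-$u$ estimate $\rho_{X,u}(t)=O(t)$ near $t=0$ is unjustified, and there is no reason to expect it: the short-time remainder is governed by local heat invariants (curvature and its derivatives), which are completely insensitive to the spectral quantities you invoke. The paper's argument here is the technical heart of the theorem and is genuinely different from anything you sketched. It passes to the complexified theta function
\[
\widetilde{\theta}_u(x)=\frac{1}{2\pi i}\int_{c-i\infty}^{c+i\infty} x^{-s}\Gamma(s)\zeta_{X,u}(s)\,ds,\qquad \mathrm{Re}(x)>0,
\]
uses the uniform convergence of $\zeta_{X,u}$ on the vertical line $\mathrm{Re}(s)=c$ to obtain $|\widetilde{\theta}_u(x)-\widetilde{\theta}_{u'}(x)|\le K|x|^{-c}$, and then extracts uniform bounds on the higher coefficients $a_{u,k}$ by integrating $\widetilde{\rho}_u(x^2)/x^{2k}$ over a semicircle $D$ of radius $r$ in the right half-plane (a Cauchy-type coefficient estimate). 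Summing the resulting geometric series in $k$ produces the uniform bound $|\rho_{X,u}(t)|\le K'' t$ for $0\le t\le t_0$, valid for all large $u$. Without this complex-analytic step you have no control on the short-time behaviour of $\theta_{X,\infty}$, and hence no meromorphic continuation and no formula for $\zeta'_{X,\infty}(0)$.
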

We introduce then the following definition:
\begin{definition}
\rm{Let $X$ be a compact Riemann surface endowed with a integrable metric, $h_{X,\infty}$. We set
\[T\bigl((X,\omega_{X,\infty}),(\mathcal{O},h_{\mathcal{O}}) \bigr):=\zeta'_{\Delta_{X,\infty}}(0),\] and we call it the holomorphic
analytic torsion attached to $\bigl((X,\omega_{X,\infty}),(\mathcal{O},h_{\mathcal{O}}) \bigr)$, where $h_\mathcal{O}$ is a constant
metric.}
\end{definition}

Before to prove this theorem, recall that  for  any $u>1$ and any integer $k$, there exist real numbers
$a_{u,-1},a_{u,0},\ldots,a_{u,k}$ such that:
\[
\theta_{X,u}(t)=\sum_{i=-1}^k a_{u,i}t^i+O(t^{k+1}),
\]
for any $t$ small enough, see \cite[p. 94]{heat}. Notice that, $a_{u,-1}=4\pi\, \mathrm{vol}_u(X)$, see  \cite[theorem 2.41]{heat}.

\begin{proposition}\label{a_uuu}
We consider $(h_{X,u})_{u>1}$ as before. Then  $(a_{u,0})_{u\geq 1}$ and $(a_{u,-1})_{u\geq 1}$ converge respectively to finite
limit when $u$ goes to $\infty$.
\end{proposition}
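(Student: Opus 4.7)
The strategy is to identify both coefficients $a_{u,-1}$ and $a_{u,0}$ in closed geometric form using the classical Minakshisundaram--Pleijel expansion (valid for each $u\ge 1$ since $h_{X,u}$ is smooth), and then show convergence in $u$ separately. The key observation is that $a_{u,-1}$ depends continuously on the metric through the volume, while $a_{u,0}$ is determined by a topologically invariant curvature integral, hence does not depend on $u$ at all.

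\textbf{The coefficient $a_{u,-1}$.} The paper has already recorded $a_{u,-1}=4\pi\,\mathrm{vol}_u(X)=4\pi\int_X \omega_{X,u}$, citing \cite[theorem 2.41]{heat}. In the local chart formula $\omega_{X,u}=\tfrac{i}{2\pi}\,h_{X,u}\bigl(\tfrac{\pt}{\pt z},\tfrac{\pt}{\pt z}\bigr)\,dz\wedge d\z$, the integrand converges pointwise to the corresponding expression for $h_{X,\infty}$ and is uniformly bounded (since $(h_{X,u})_{u\ge 1}$ is uniformly bounded, being uniformly convergent to $h_{X,\infty}$). A partition-of-unity plus dominated-convergence argument, strictly parallel to the one carried out in the proof of Theorem \ref{laplaceTX}, then yields
\[
\lim_{u\to\infty} a_{u,-1}\;=\;4\pi\int_X \omega_{X,\infty}\;<\;\infty,
\]
which gives the claimed convergence for the singular coefficient.

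\textbf{The coefficient $a_{u,0}$.} Here the key input is the Minakshisundaram--Pleijel formula for the constant term of $\mathrm{Tr}(e^{-t\Delta_{X,u}})$ on a compact Riemann surface with smooth K\"ahler metric: it is a universal linear combination of $\int_X K_{X,u}\,\omega_{X,u}$ (the integrated Gaussian curvature) and dimensional/topological constants. By the Gauss--Bonnet theorem applied to the smooth metric $h_{X,u}$, the curvature integral equals $2\pi\chi(X)$, which is a topological invariant of $X$ and is therefore independent of $u$. Since $\theta_{X,u}(t)=\mathrm{Tr}_u(P^u e^{-t\Delta_{X,u}})=\mathrm{Tr}(e^{-t\Delta_{X,u}})-\dim H^0(X,\mathcal{O})$ and $\dim H^0(X,\mathcal{O})=1$ is also independent of $u$, the constant term $a_{u,0}$ is the same topological number for every $u\ge 1$. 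In particular, $(a_{u,0})_{u\ge 1}$ is a constant sequence, hence trivially converges to a finite limit.

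\textbf{Main obstacle.} There is no serious analytic difficulty: each coefficient reduces to either a volume (converging by uniform metric convergence) or a topological invariant (constant in $u$). The only care required is matching the normalization used here (the factor $\tfrac{i}{2\pi}$ in $\omega_X$, and the Dolbeault Laplacian $\bar\pt_{\mathcal{O}}^{\ast}\bar\pt_{\mathcal{O}}$ versus the Laplace--Beltrami operator) to the conventions of \cite[Theorem 2.41]{heat}; this is a purely bookkeeping check, and the conclusion of the proposition then follows at once.
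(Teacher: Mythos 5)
Your proof is correct, and the treatment of $a_{u,-1}$ (volume, uniform convergence of the metrics, dominated convergence) is exactly the paper's. For $a_{u,0}$, however, you take a genuinely different route. You invoke the local Minakshisundaram--Pleijel expansion directly: the constant heat coefficient on a surface is a universal combination of $\int_X K_{X,u}\,\omega_{X,u}$ and dimensional constants, and Gauss--Bonnet makes this a topological invariant, hence independent of $u$. The paper instead reaches the same conclusion through global spectral/Arakelov machinery: it rescales the metric $h_{X,u}\mapsto t\,h_{X,u}$, observes $\Delta_{t,u}=t^{-1}\Delta_{1,u}$ so that $\zeta'_{\Delta_{t,u}}(0)=\zeta_{\Delta_{1,u}}(0)\log t+\zeta'_{\Delta_{1,u}}(0)$, computes the Bott--Chern secondary class $\widetilde{Td}(TX,th_{X,u},h_{X,u})$ appearing in the Bismut--Gillet--Soul\'e anomaly formula, tracks the scaling of the $L^2$-metric, and identifies $\zeta_{\Delta_{X,u}}(0)=a_{u,0}$ to extract $a_{u,0}=\tfrac{1}{6}\int_X c_1(TX)+2$, manifestly $u$-independent. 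Both arguments land on the same fact (constancy of $a_{u,0}$ as a topological quantity); yours is more elementary and local, quoting the standard heat-coefficient formula, while the paper's is self-contained within the Quillen-metric formalism it is already using and produces the explicit closed form of the constant as a by-product. The only point you must genuinely verify, as you note, is the normalization matching between the Dolbeault Laplacian $\overline{\partial}_{\mathcal{O}}^{\ast}\overline{\partial}_{\mathcal{O}}$ with the form $\omega_X=\tfrac{i}{2\pi}h_X(\cdot,\cdot)\,dz\wedge d\z$ and the conventions of the heat-kernel reference; this affects the value of the constant but not its $u$-independence, so your conclusion stands.
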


\begin{proof}
We know that:
\[
 a_{u,-1}=4\pi \,\mathrm{vol}_u(X).
\]
Since, the sequence of hermitian norms, $(L^2_{X,u})_{u>0}$ tends to $L^2_{X,\infty}$. It follows that $\lim_{u \mapsto \infty} a_{u,-1}=4\pi \,\mathrm{rg}(\mathcal{O})\,\mathrm{vol}_\infty(X)=:a_{\infty,-1}$.

Let us prove the second assertion. Let $u>1$, and $t$ be a positive real, and we consider  $\bigl((TX,th_{X,u}); (\mathcal{O},h_{\mathcal{O}})\bigr)$.
The  variation of   Quillen metrics associated to $t$ is given by the following anomaly formula,
see \cite{BGS1}:
\[
\begin{split}
 -\log h_{Q,((TX,th_{X,u}); (\mathcal{O},h_{\mathcal{O}}))}+\log h_{Q,((TX,h_{X,u}); (\mathcal{O},h_{\mathcal{O}}))}&=\int_X ch(\mathcal{O},h_{\mathcal{O}})\widetilde{Td}(TX,th_{X,u},h_{X,u}),\\
\end{split}
\]
One checks, using the local expression of the Laplacian, that:
\[
\Delta_{t,u}:= \Delta_{((TX,th_{X,u}); (\mathcal{O},h_{\mathcal{O}})) }=t^{-1}\Delta_{((TX,h_{X,u}); (\mathcal{O},h_{\mathcal{O}}))}=t^{-1}\Delta_{1,u}\quad \forall\,t>0.
\]
It follows that $\zeta_{\Delta_{t,u}}'(0)=\zeta_{\Delta_{1,u}}(0)\log t+ \zeta'_{\Delta_{1,u}}(0)$, where $\zeta_{\Delta_{t,u}}$
denote the  Zeta function  associated to the data $\bigl((TX,th_{X,u}); (\mathcal{O},h_{\mathcal{O}})\bigr)$. We verify that:
\[\widetilde{Td}(TX,t\,h_{X,u},h_{X,u})=\frac{1}{2}\log t+\frac{1}{6}(\log t)\, c_1(TX,h_{X,u}),\]
in $\oplus_{p\geq 0}\widetilde{A}^{(p,p)}(X)$, see \cite{Character} for the definition of  Bott-Chern classes. Since $\mathrm{Vol}_{th_{X,u}}=t^{\dim X}\mathrm{Vol}_{h_{X,u}}$, then
\[h_{L^2,((TX,th_{X,u}); (\mathcal{O},h_{\mathcal{O}}))}=t^{2\dim H^0(X,\mathcal{O})} h_{L^2,((TX,h_{X,u}); (\mathcal{O},h_{\mathcal{O}}))}. \]
Recall that,
\[
h_{Q,((TX,h_{X,u}); (\mathcal{O},h_{\mathcal{O}}))}=h_{L^2,((TX,th_{X,u}); (\mathcal{O},h_{\mathcal{O}}))}\exp\bigl(-\zeta_{\Delta_{1,u}}'(0)\bigr).
\]
Using the  previous anomaly formula, we get
\[
 \begin{split}
  -2\dim H^0(X,\mathcal{O}) \log t+ \zeta_{\Delta_{1,u}}(0)\log t &=\frac{1}{2}\log t\int_Xc_1(\mathcal{O},h_\mathcal{O})+ \frac{1}{6}\log t \int_Xc_1(TX,h_X).
 \end{split}
\]
Remember that $\zeta_{\Delta_{X,u}}(0)=a_{u,0}$  (see for instance \cite[theorem.1]{Soulé}), hence:
\begin{equation}\label{a_0}
 a_{u,0}=\frac{1}{6}\int_X c_1(TX)+2\quad \forall\, u>1.
\end{equation}
Therefore, $a_{u,0}$ does not   depend on $u$.
\end{proof}

\begin{proof}

In order to study $\theta_{X,\infty}$ we introduce  for any $u\geq 1$, the following auxiliary function:
\[
 \theta_{u,\infty}(t):=\bigl\|P^u e^{-t\Delta_{X,u}}\bigr\|_{1,\infty}\quad \forall \, t>1.
\]
Note that this function is finite for any $t>0$, this is follows easily from \eqref{n5}. We have for any $u,u'>1$,
{\allowdisplaybreaks
\begin{align*}
\Bigl| \theta_{u,\infty}(t)- &\theta_{u',\infty}(t) \Bigr|=\Bigl|\Bigl\| P^u e^{-t\Delta_{X,u}}\Bigr\|_{1,\infty}-\Bigl\|  P^{u'} e^{-t\Delta_{X,u'}}\Bigr\|_{1,\infty}  \Bigr|\\
&\leq \Bigl\| P^u e^{-t\Delta_{X,u}}-P^{u'}e^{-t\Delta_{X,u'}}  \Bigr\|_{1,\infty}\\
&=\biggl\|\int_u^{u'} \frac{\pt}{\pt v}\bigl(P^v e^{-t\Delta_{X,v}}\bigr)dv  \biggr\|_{1,\infty}\\
&=\biggl\|\int_u^{u'}\Bigl( \frac{\pt P^v}{\pt v} e^{-t\Delta_{X,v}}+P^v \frac{\pt }{\pt v}\bigl(\mathcal{O}^{-t\Delta_{X,v}} \bigr)\Bigr)dv  \biggr\|_{1,\infty}\\
&=\biggl\|\int_u^{u'} \Bigl(\frac{\pt P^v}{\pt v} P^ve^{-t\Delta_{X,v}}+P^v \frac{\pt }{\pt v}\bigl(\mathcal{O}^{-t\Delta_{X,v}} \bigr)\Bigr)dv  \biggr\|_{1,\infty}\\
&\leq  \biggl\|\int_u^{u'} \frac{\pt P^v}{\pt v} P^ve^{-t\Delta_{X,v}}dv\biggl\|_{1,\infty}+ \biggl\|\int_u^{u'}P^v\biggl( \frac{\pt }{\pt v}\bigl(\mathcal{O}^{-\frac{t}{2}\Delta_{X,v}}\bigr)e^{-\frac{t}{2}\Delta_{X,v}}+e^{-\frac{t}{2}\Delta_{X,v}}\frac{\pt }{\pt v}\bigl(\mathcal{O}^{-\frac{t}{2}\Delta_{X,v}}\bigr) \biggr)dv  \biggr\|_{1,\infty}\\
&= \biggl\|\int_u^{u'} \frac{\pt P^v}{\pt v} P^ve^{-t\Delta_{X,v}}dv\biggl\|_{1,\infty}+\biggl\|\int_u^{u'} \Bigl(P^v\frac{\pt }{\pt v}\bigl(\mathcal{O}^{-\frac{t}{2}\Delta_{X,v}}\bigr)\Bigr)\Bigl(P^v e^{-\frac{t}{2}\Delta_{X,v}}\Bigr)+\Bigl(P^v e^{-\frac{t}{2}\Delta_{X,v}}\Bigr)\frac{\pt }{\pt v}\bigl(\mathcal{O}^{-\frac{t}{2}\Delta_{X,v}}\bigr) \Bigr)dv  \biggr\|_{1,\infty}\\
&\leq  \int_u^{u'} \biggl\|\frac{\pt P^v}{\pt v} \biggl\|_{L^2,\infty}\theta_{v,\infty}(t)dv+ 2\int_u^{u'} \biggl\|   \frac{\pt }{\pt v}e^{-\frac{t}{2}\Delta_{X,v}}\biggr\|_{L^2,\infty} \theta_{v,\infty}(\frac{t}{2})dv\quad \text{by}\;\eqref{normetrace} \\
&\leq c_7\int_u^{u'}\frac{1}{2^v}\theta_{v,\infty}(t)dv+ c_6\int_u^{u'}\frac{1}{2^v}t^{\frac{1}{4}}\theta_{v,\infty}(\frac{t}{2})dv\\
&\leq c_8\frac{1}{2^u}\int_u^{u'}\Bigl(\theta_{v,\infty}(t)+t^{\frac{1}{4}}\theta_{v,\infty}(\frac{t}{2})\Bigr)dv.
 \end{align*}}
We used the following  facts: $\frac{\pt }{\pt v}\bigl(e^{-t\Delta_{X,v}}\bigr)=\frac{\pt }{\pt v}\bigl(e^{-
t\Delta_{X,v}}\bigr)P^v$
and  $\frac{\pt P^v}{\pt v}P^v=\frac{\pt P^v}{\pt v}Id$. Let us prove the first one: Let $v>1$, and $\xi \in \h$. There exists
$a(\xi)$, a constant, such that $\xi-P^v\xi=a(\xi)$, then:
\[
\frac{e^{-t\Delta_{X,u}}-e^{-t\Delta_{X,u'}}}{u-u'}\Bigl(\xi-P^v\xi\Bigr)=\frac{1}{u-u'}\Bigl( a(\xi)-a(\xi)
\Bigr)=0\quad \forall\, u\neq u',
\]
(since $e^{-t\Delta_{X,u}}\phi=e^{-t\la} \phi$, if $\Delta_{X,u}\phi=\la \phi$). Therefore, for any $s\in \C$ such that $\mathrm{Re}(s)>1$:
\[
\begin{split}
\biggl|\int_0^\infty t^{s-1}\theta_{u,\infty}(t)dt-\int_0^\infty t^{s-1}\theta_{u',\infty}(t)dt\biggr|&\leq c_8\int_u^{u'}\frac{1}{2^v}\int_0^\infty\Bigl(t^{\mathrm{Re}(s)-1} \theta_{v,\infty}(t)+ t^{\mathrm{Re}(s)+\frac{1}{4}-1}\theta_{v,\infty}(\frac{t}{2})\Bigr)dtdv.\\
\end{split}
\]

Now, we consider  the following function $\zeta_{u,\infty}$:
\[
 \zeta_{u,\infty}(s):=\frac{1}{\Gamma(s)}\int_0^\infty \theta_{u,\infty}(t)t^{s-1}dt,\quad \forall s\in \C.
\]

If we let $B=P^u$ in  \eqref{thetainfty}, we get
\begin{equation}\label{ZZZZZ}
\frac{1-\eps}{1+\eps}\theta_{u,\infty}(t)\leq \theta_{X,u}(t)\leq \frac{1+\eps}{1-\eps}\theta_{u,\infty}(t),\quad \forall t>0,
\end{equation}
this is yields to
\begin{equation}\label{zetaeps11}
\frac{1-\eps}{1+\eps}\zeta_{u,\infty}(s)\leq \zeta_{X,u}(s)\leq \frac{1+\eps}{1-\eps}\zeta_{u,\infty}(s),\quad \forall s\in \R.
\end{equation}
Since $\zeta_{X,u}(s)$ is finite for any $s\in \C$ such that $\mathrm{Re}(s)>1$ and  any $u\geq 1$, it follows that $\zeta_{\infty,u}$ is finite for 
any $s\in \C$ such that $\mathrm{Re}(s)>1$. Then, for any $s\in \C$ with $\mathrm{Re}(s)>1$, we have
\[
\begin{split}
\biggl|\zeta_{u,\infty}(s)-\zeta_{u',\infty}(s)\biggr|&\leq \frac{c_8}{2^u} \int_u^{u'}\biggl(\frac{\Gamma\bigl(\mathrm{Re}(s)\bigr) }{\bigl| \Gamma(s)\bigr|}\zeta_{v,\infty}\bigl(\mathrm{Re}(s)\bigr)+2^{\mathrm{Re}(s)+\frac{1}{4}}\frac{\Gamma\bigl(\mathrm{Re}(s)+\frac{1}{4}\bigr)}{\bigl|\Gamma(s)\bigr|}\zeta_{v,\infty}\bigl(\mathrm{Re}(s)+\frac{1}{4}\bigr)\biggr)dv\\
\end{split}
\]

We claim that, it is   possible to  suppose that the first nonzero eigenvalue of
$\Delta_{X,v}$
is $> 1$, for any $v> 1$. Indeed, let $v>1$.  If we multiply  $\omega_{X,v}$ by $t>0$, then the corresponding first nonzero
eigenvalue is $\frac{1}{t}\la_{v,1}$. And, from \eqref{Cheeger} and \eqref{infboundvp}, we have
$\frac{1}{4}(\frac{c_1}{c_2})^2 h_{tg_1}(X)^2\leq \frac{1}{4}h_{g_v}(X)^2\leq \frac{1}{t}\la_{v,1}$ for any $v\geq 1$. Since,
$h_{tg_1}(X)=t^{-\frac{1}{2}}h_{g_1}(X)$, we obtain: $\frac{1}{4t}(\frac{c_1}{c_2})^2 h_{g_1}(X)^2\leq
\frac{1}{t}\la_{v,1}$ for any $v\geq 1$. Hence, for $t$ sufficiently small, we can assume that $\la_{v,1}>1$ for any $v\geq 1$.\\

 Therefore, we get for any  $s>1$:
\begin{align*}
\zeta_{X,v}(s)-\zeta_{X,v}(s+\frac{1}{4})&=\sum_{k=1}^\infty \frac{1}{\la_{v,k}^s}-\sum_{k=1}^\infty \frac{1}{\la_{v,k}^{s+\frac{1}{4}}} \\
&\geq \sum_{k=1}^\infty \frac{1}{\la_{v,k}^s}-\frac{1}{\la_{v,1}^s}\sum_{k=1}^\infty \frac{1}{\la_{v,k}^{s}}\\
&=  \Bigl(1-\frac{1}{\la_{1,v}^{\frac{1}{4}}}\Bigr)\zeta_{X,v}(s).
\end{align*}
Then, using \eqref{zetaeps11},  it is possible to have:
\[
\zeta_{v,\infty}(s)\geq \zeta_{v,\infty}(s+\frac{1}{4}),\quad \forall s>1,\forall v\gg1.
\]
Therefore,
\begin{equation}\label{gronwall}
\begin{split}
\Bigl|\zeta_{u,\infty}(s)-\zeta_{u',\infty}(s)\Bigr|&\leq \frac{c_8}{2^u}\biggl(\frac{\Gamma\bigl(\mathrm{Re}(s)\bigr)}{|\Gamma(s)|}+2^{\mathrm{Re}(s)+\frac{1}{4}}\frac{\Gamma\bigl(\mathrm{Re}(s)+\frac{1}{4}\bigr)}{|\Gamma(s)|} \biggr) \int_u^{u'}\zeta_{v,\infty}\bigl(\mathrm{Re}(s)\bigr)dv,\; \forall\, \mathrm{Re}(s)>1.\\
\end{split}
\end{equation}
Now we assume $s>1$. Let us prove that, for any $ \forall u,u'\gg 1$ and any  $ s>1$:
{\footnotesize{\begin{equation}\label{zetarapport}
\exp\biggl(-\frac{c_8}{\log 2}\Bigl|\frac{1}{2^u}-\frac{1}{2^{u'}} \Bigr|\biggl(\frac{\Gamma\bigl(\mathrm{Re}(s)\bigr)}{|\Gamma(s)|}+2^{s+\frac{1}{4}}\frac{\Gamma(s+\frac{1}{4})}{\Gamma(s)} \biggr) \biggr)\leq \frac{\zeta_{u,\infty}(s)}{\zeta_{u',\infty}(s)}\leq \exp\biggl(\frac{c_8}{\log 2}\Bigl|\frac{1}{2^u}-\frac{1}{2^{u'}} \Bigr|\biggl(\frac{\Gamma\bigl(\mathrm{Re}(s)\bigr)}{|\Gamma(s)|}+2^{s+\frac{1}{4}}\frac{\Gamma(s+\frac{1}{4})}{\Gamma(s)} \biggr) \biggr).
\end{equation}}}
If $u'\mapsto \zeta_{u',\infty}(s)$ is a derivable function  for $s$ fixed, we obtain using \eqref{gronwall}
\[
\begin{split}
\biggl|\frac{\pt }{\pt u}\zeta_{u,\infty}(s)\biggr|&\leq \frac{c_8}{2^u}\biggl(\frac{\Gamma\bigl(\mathrm{Re}(s)\bigr)}{|\Gamma(s)|}+2^{\mathrm{Re}(s)+\frac{1}{4}}\frac{\Gamma\bigl(\mathrm{Re}(s)+\frac{1}{4}\bigr)}{|\Gamma(s)|} \biggr)\zeta_{u,\infty}(s),\\
\end{split}
\]
then,
\[
\biggl|\frac{\pt }{\pt u}\log \zeta_{u,\infty}(s)\biggr|\leq
\frac{c_8}{2^u}\biggl(\frac{\Gamma\bigl(\mathrm{Re}(s)\bigr)}{|\Gamma(s)|}+2^{s+\frac{1}{4}}\frac{\Gamma(s+\frac{1}{4}
)}{\Gamma(s)} \biggr),
\]
which gives:
\[
\Bigl|\log \zeta_{u,\infty}(s)-\log \zeta_{u',\infty}(s)\Bigr|\leq \frac{c_8}{\log
2}\Bigl|\frac{1}{2^u}-\frac{1}{2^{u'}}
\Bigr|\biggl(\frac{\Gamma\bigl(\mathrm{Re}(s)\bigr)}{|\Gamma(s)|}+2^{s+\frac{1}{4}}\frac{\Gamma(s+\frac{1}{4})}{\Gamma
(s)} \biggr) \quad \forall u,u'.
\]
On the other hand, if $u'\mapsto \zeta_{u',\infty}(s)$ is not derivable, we apply the  Gronwall lemma to the  function
$u'\mapsto \zeta_{u',\infty}(s)$. Note that this  function is continuous, because it is locally  Lipschitz, which
is a consequence of  the continuity of $v\mapsto \zeta_{X,v}(s)$, and the inequalities  \eqref{zetaeps11} and  \eqref{gronwall}.\\

From \eqref{zetaeps11} and \eqref{zetarapport}, there exist positives constants  $c_{12}$ and $c_{13}$ such that
{\footnotesize{
\begin{equation}\label{zetaboundunif}
c_{12}\exp\biggl(-\frac{c_8}{\log 2}\Bigl|\frac{1}{2^u}-\frac{1}{2^{u'}} \Bigr|\biggl(\frac{\Gamma\bigl(\mathrm{Re}(s)\bigr)}{|\Gamma(s)|}+2^{s+\frac{1}{4}}\frac{\Gamma(s+\frac{1}{4})}{\Gamma(s)} \biggr) \biggr)\leq \frac{\zeta_{X,u}(s)}{\zeta_{X,u'}(s)}\leq c_{13}\exp\biggl(\frac{c_8}{\log 2}\Bigl|\frac{1}{2^u}-\frac{1}{2^{u'}} \Bigr|\biggl(\frac{\Gamma\bigl(\mathrm{Re}(s)\bigr)}{|\Gamma(s)|}+2^{s+\frac{1}{4}}\frac{\Gamma(s+\frac{1}{4})}{\Gamma(s)} \biggr) \biggr)\;
\end{equation}}}
for any $u,u'\gg1$ et $ \forall s>1$. \\

We know that $\zeta_{X,u}$ is  holomorphic on the open set  $\{s\in \C|\mathrm{Re}(s)>1\}$. Let us prove that
$\bigl(\zeta_{X,u}\bigr)_{u\geq 1}$ converges  to a holomorphic function defined on $\{s\in \C|\mathrm{Re}(s)>1\}$ uniformly on any
domain of the  form $\al\leq \mathrm{Re}(s)\leq \beta$, where
$1<\al\leq \beta$. We will show that this function is in fact  $\zeta_{X,\infty}$. \\

From \eqref{ZZZZZ}, we have
\begin{align*}
 -\frac{2\eps}{1+\eps}\theta_{\infty,u}(t)\leq \theta_{X,u}(t)-\theta_{\infty,u}(t)\leq \frac{2\eps}{1-\eps}\theta_{u,\infty}\quad \forall \,t>0\;\forall u,u'\gg 1
\end{align*}
Then,
\begin{align*}
 \Bigl|\theta_{X,u}(t)-\theta_{\infty,u}(t) \Bigr|\leq \frac{2\eps}{1-\eps}\theta_{u,\infty}(t),
\end{align*}
It follows that,
\begin{align*}
 \Bigl| \zeta_{X,u}(s)-\zeta_{u,\infty}(s)\Bigr|\leq \frac{2\eps}{1-\eps}\frac{\Gamma(\mathrm{Re}(s))}{|\Gamma(s)|}\zeta_{u,\infty}(\mathrm{Re}(s))\quad \forall\, \mathrm{Re}(s)>1.
\end{align*}
Using this inequality, we get:
\begin{align*}
 \Bigl| \zeta_{X,u}(s)-\zeta_{X,u'}(s) \Bigr|&\leq   \Bigl| \zeta_{u,\infty}(s)-\zeta_{u',\infty}(s)\Bigr|+\frac{2\eps}{1-\eps}\frac{\Gamma(\mathrm{Re}(s))}{|\Gamma(s)|}\Bigl(\zeta_{u,\infty}(\mathrm{Re}(s))+\zeta_{u',\infty}\bigl(\mathrm{Re}(s)\bigr)\Bigr),
 \end{align*}
for any $s\in \C$ such that $\mathrm{Re}(s)>1$ and for any $u,u'\gg 1$.\\

If we choose $1<\al<\beta$.  Then,  we have already proved that  $\bigl(\zeta_{u,\infty}(\mathrm{Re}(s))\bigr)_u$ is uniformly
bounded with respect to $u$ on $\al\leq \mathrm{Re}(s)\leq \beta$. Then, we can find a constant $K$, which depend uniquely on
  $\al$ and $\beta$ such that
  \begin{align*}
 \Bigl| \zeta_{X,u}(s)-\zeta_{X,u'}(s) \Bigr|&\leq   \Bigl| \zeta_{u,\infty}(s)-\zeta_{u',\infty}(s)\Bigr|+\frac{2\eps}{1-\eps}K,
\end{align*}
for any $s\in \C$ such that $\mathrm{Re}(s)>1$ and for any $u,u'\gg 1$.

Therefore, $\bigl(\zeta_{X,u}\bigr)_{u\geq 1}$ converges uniformly to a limit on the set $\{s\in \C\,|\,\al\leq\mathrm{Re}(s)\leq
\beta\}$, and this limit is necessarily holomorphic on $\{s\in \C\,|\,\al\leq\mathrm{Re}(s)\leq
\beta\}$. \\

The following claim will be used to establish that $(\zeta_{X,u})_{u\geq 1}$ converges pointwise to $\zeta_{X,\infty}$
on the set $\{s\in \C\,|\, \mathrm{Re}(s)>1 \}$.
\begin{claim}
Let $\theta$ be  a positive decreasing  function on $\R^+$. We let  $\zeta$ be  the  function given by:
\[
 \zeta(s)=\frac{1}{\Gamma(s)}\int_0^\infty t^{s-1}\theta(t)dt,
\]
for  $s\in \R$.
We have,
\begin{equation}\label{thetazeta1111}
 \theta(a)\leq \frac{\Gamma(s+1)}{a^{s}}\zeta(s),\quad \, \forall s>a>0.
\end{equation}

\end{claim}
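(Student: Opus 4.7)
The plan is to exploit the monotonicity of $\theta$ together with the obvious monotonicity of the integral in the domain of integration. Since the full integral $\int_0^\infty t^{s-1}\theta(t)\,dt$ is at least its piece on $[0,a]$, it suffices to bound $\int_0^a t^{s-1}\theta(t)\,dt$ from below in terms of $\theta(a)$.

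First I would observe that, because $\theta$ is decreasing, $\theta(t)\geq \theta(a)$ for every $t\in(0,a]$. This gives
\[
\int_0^a t^{s-1}\theta(t)\,dt \;\geq\; \theta(a)\int_0^a t^{s-1}\,dt \;=\; \theta(a)\,\frac{a^s}{s},
\]
which requires $s>0$ for the inner integral to converge at $0$ and for the division by $s$ to make sense. Then, using positivity of $\theta$ on $[a,\infty[$ to discard the remaining part of the integral, I would conclude
\[
\Gamma(s)\,\zeta(s) \;=\; \int_0^\infty t^{s-1}\theta(t)\,dt \;\geq\; \theta(a)\,\frac{a^s}{s}.
\]

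Finally, rearranging and using the functional equation $s\,\Gamma(s)=\Gamma(s+1)$ yields exactly the desired inequality
\[
\theta(a) \;\leq\; \frac{s\,\Gamma(s)}{a^s}\,\zeta(s) \;=\; \frac{\Gamma(s+1)}{a^s}\,\zeta(s).
\]

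There is really no obstacle here: the argument is a one-line application of monotonicity of $\theta$ and of the integral, plus the identity $\Gamma(s+1)=s\,\Gamma(s)$. The only mild point of care is to note that the hypothesis $s>a>0$ in the statement in particular ensures $s>0$, which is all that is needed to evaluate $\int_0^a t^{s-1}\,dt=a^s/s$ and to divide by it; the condition $s>a$ itself plays no role in the inequality and can presumably be read as the regime in which the bound will later be applied.
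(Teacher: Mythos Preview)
Your proof is correct and follows essentially the same approach as the paper: split the integral at $a$, drop the tail by positivity, and bound the piece on $[0,a]$ from below using $\theta(t)\geq\theta(a)$ together with $\int_0^a t^{s-1}\,dt = a^s/s$ and $s\Gamma(s)=\Gamma(s+1)$. Your observation that only $s>0$ is actually needed (the hypothesis $s>a$ being superfluous for the inequality itself) is also correct.
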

\begin{proof}
Let $a>0$  and $s>a$, we have
\[
 \begin{split}
  \zeta(s)&=\frac{1}{\Gamma(s)}\int_0^\infty \theta(t)t^{s-1}dt\\
&=\frac{1}{\Gamma(s)}\int_0^a \theta(t)t^{s-1}dt+\frac{1}{\Gamma(s)}\int_a^\infty \theta(t)t^{s-1}dt\\
&\geq \frac{\theta(a)}{\Gamma(s)}\int_0^a t^{s-1}dt\\
&=\frac{\theta(a)}{\Gamma(s+1)}a^s.
 \end{split}
\]
\end{proof}
We will apply this  lemma in order to show that $\theta_{X,u}(t)$ uniformly  bounded with respect to $u$ for any $t>0$. From \eqref{thetazeta1111},
we have,
\[
\theta_{X,u}(t)\leq \frac{\Gamma(s+1)}{t^{s}}\zeta_{X,u}(s),\quad \, \forall s>t>0.
\]
then, if we choose $s>1$ so $\zeta_{X,u}(s)$ is finite. Using \eqref{zetaboundunif}, we deduce that for any fixed $t>0$, $\theta_{X,u}(t)$ is uniformly bounded with respect to $u$. \\

From \eqref{thetainfty}, \eqref{thetainfty11} and \eqref{thetainfty12} we get
\[
 \bigl(\theta_{X,u}(t)\bigr)_u\xrightarrow[u\mapsto \infty]{} \theta_{X,\infty}(t)\quad \forall t>0.
\]
And using \eqref{a_uuu}, we deduce easily that:
\[
\bigl(\rho_{X,u}(t)\bigr)_{u} \xrightarrow[u\mapsto \infty]{} \rho_{X,\infty}(t)\quad \forall t>0.
\]

Fix $\eps>0$, since $\bigl(\zeta_{X,u}(1+\eps) \bigr)_{u\geq 1}$ is a convergent sequence, we can find a constant $c$ such that:
\[
 \theta_{X,u}(t)\leq \frac{1}{t^{\eps +1}}c,\quad\forall\, 0<t<1+\eps,\; \forall u\gg1.
\]
then,
\[
 \theta_{X,\infty}(t)\leq \frac{1}{t^{\eps+1}}c.
\]
Let $s\in \C$ such that $\mathrm{Re}(s)>1+\eps$. We have
\[
 \zeta_{X,u}(s)=\frac{1}{\Gamma(s)}\int_0^\delta t^{s-\eps-2} \bigl(t^{\eps+1}\theta_{X,u}(t)  \bigr)dt+\frac{1}{\Gamma(s)}\int_\delta^\infty t^{s-1}\theta_{X,u}(t)dt,\; \forall u\geq 1.
\]
Since $\bigl(\theta_{X,u}\bigr)_u$ converges pointwise to $\theta_\infty$, and by the dominated convergence theorem, we get:
\[
 (\zeta_{X,u}(s))_{u\geq 1}\xrightarrow[u\mapsto \infty]{} \zeta_{X,\infty}(s)\quad \forall\, s\in \C\, s.t. \,\mathrm{Re}(s)>1+\eps.
\]

Let us prove the following
\[
 \zeta_{X,\infty}(s)=\sum_{k=1}^\infty \frac{1}{\la_{\infty,k}^s},
\]
for any $s\in \C$ such that $\mathrm{Re}(s)>1$. Let us show first
\[
 \zeta_{X,\infty}(s)=\sum_{k=1}^\infty \frac{1}{\la_{\infty,k}^s},\; \forall\, s>1.
\]

Let $\delta>0$, we have for any $\mathrm{s}> 1+\eps$:
\begin{align*}
 \zeta_{X,\infty}(s)&=\frac{1}{\Gamma(s)}\int_0^\delta t^{s-1}\theta_{X,\infty}(t)dt+\int_\delta^\infty t^{s-1}\theta_{X,\infty}(t)dt\\
&=\frac{1}{\Gamma(s)}\int_0^\delta \Bigl(\theta_{X,\infty}(t) t^{\eps+1} \Bigr)t^{s-2-\eps}dt+\int_\delta^\infty t^{s-1}\theta_{X,\infty}(t)dt.
\end{align*}
Since $\Bigl|\frac{1}{\Gamma(s)}\int_0^\delta \Bigl(\theta_{X,\infty}(t) t^{\eps+1} \Bigr)t^{s-2-\eps}dt\Bigr|\leq
\frac{c}{s-1-\eps}\delta^{s-1-\eps}$ and $\theta_{X,\infty} (t)\leq \theta_{X,\infty}(\delta)e^{-\la_{\infty,1}(t-\delta)}$ for any $t\geq
\delta$, then
\begin{align*}
 \zeta_{X,\infty}(s)&=O(\delta^{s-1-\eps})+\sum_{k=1}^\infty \frac{1}{\Gamma(s)}\int_\delta^\infty t^{s-1}e^{-\la_{\infty,k} t}dt\\
&\leq  O(\delta^{s-1-\eps})+\sum_{k=1}^\infty \frac{1}{\la_{\infty,k}^s}\frac{1}{\Gamma(s)}\int_{\la_{\infty,k}\delta}^\infty t^{s-1}e^{- t}dt\\
&\leq O(\delta^{s-1-\eps})+\sum_{k=1}^\infty \frac{1}{\la_{\infty,k}^s}.
\end{align*}

Remark that
\[
 \sum_{k=1}^\infty \frac{1}{\la_{\infty,k}^s}=\lim_{N\mapsto \infty}\sum_{k=1}^N \frac{1}{\la_{\infty,k}^s}\leq \frac{1}{\Gamma(s)}\int_0^\infty t^{s-1}\theta_{X,\infty}(s)dt=\zeta_{X,\infty}(s).
\]

We conclude that
\[
 \zeta_{X,\infty}(s)=\sum_{k=1}^\infty \frac{1}{\la_{\infty,k}^s},\; \forall s>1.
\]
Let $s\in \C$ such that $\mathrm{Re}(s)>1$. For any $N\in \N$, we set
\[
 \zeta_{N,\infty}(s):=\frac{1}{\Gamma(s)}\int_0^\infty t^{s-1}\Bigl(\sum_{k=N}^\infty e^{-\la_{\infty,k}t}\Bigr)dt,
\]
then
\[
\zeta_{N,\infty}(s)=\zeta_{X,\infty}(s)-\sum_{k=1}^{N-1}\frac{1}{\la_{\infty,k}^s},\quad \forall\, \mathrm{Re}(s)>1.
\]
We have,
\[
\bigl|\zeta_{N,\infty}(s)\bigr|\leq \Bigl|\frac{1}{\Gamma(s)}\Bigr|\int_0^\infty t^{\mathrm{Re}(s)-1}\Bigl(\sum_{k=N}^\infty e^{-\la_{\infty,k}t}\Bigr)dt=\frac{\Gamma(\mathrm{Re}(s))}{\bigl| \Gamma(s)\bigr|}\Bigl(\zeta_{X,\infty}(\mathrm{Re}(s))-\sum_{k=1}^{N-1}\frac{1}{\la_{\infty,k}^{\mathrm{Re}(s)}}  \Bigr).
\]
The right-hand side converges to  zero when $N$ goes to infinity. We conclude that
\[
 \zeta_{X,\infty}(s)=\sum_{k=1}^\infty \frac{1}{\la_{\infty,k}^{s}},\; \forall \,s\in \C\,s.t. \,\mathrm{Re}(s)>1.
\]

 Let $s\in \C$ such that $\mathrm{Re}(s)>1$, we have
\begin{align*}
 \bigl|\zeta_{X,u}(s)-\zeta_{X,u'}(s)\bigr|&\leq \bigl| \zeta_{X,u}(s)\bigr|+ \bigl| \zeta_{X,u'}(s)\bigr|\\
&\leq \zeta_{X,u}(\mathrm{Re}(s))+\zeta_{X,u'}(\mathrm{Re}(s)).
\end{align*}
(because $\zeta_{X,u}(s)=\sum_{k=1}^\infty \frac{1}{\la_{u,k}^s}$ when  $s\in \C$ such that $\mathrm{Re}(s)>1$). \\

Let $x\in \C$ such that $\mathrm{Re}(x)>0$. For any $u\geq 1$, we set:
\begin{align*}
\widetilde{\theta}_u(x)=\frac{1}{2\pi  i}\int_{c-i\infty}^{c+i\infty}x^{-s}\Gamma(s)\zeta_{X,u}(s)ds,
\end{align*}
where $c$ a fixed integer  greater that $1$.\\

One verify the following assertions: $\widetilde{\theta}_u(x)=\sum_{k\geq 1} e^{-\la_{u,k}x}$, $\bigl|\widetilde{\theta}_u(x) \bigr|\leq \theta(\mathrm{Re}(x))$, $\widetilde{\theta}_u$ and $\theta_{X,u}$ are equal on $\R^{+\ast}$, and that $\widetilde{\theta}_u(x)=\frac{a_{-1}}{x}+a_0+\widetilde{\rho}(x)$ for  $x$ small enough.\\

We have,
{\allowdisplaybreaks
\begin{align*}
 \biggl|\widetilde{\theta}_u(x)-\widetilde{\theta}_{u'}(x) \biggr|&\leq \frac{1}{2\pi i}\int_{c-i\infty}^{c+i\infty}\Biggl|\Bigl(x^{-s}\Gamma(s)\zeta_{X,u}(s)-x^{-s}\Gamma(s)\zeta_{X,u'}(s)\Bigr) \Biggr|ds\\
&\leq \frac{1}{2\pi  i}\int_{c-i\infty}^{c+i\infty }|x|^{\mathrm{Re}(s)}\bigl|\Gamma(s)\bigr|\Bigl(\zeta_{X,u}\bigl(\mathrm{Re}(s)\bigr)+\zeta_{X,u'}\bigl(\mathrm{Re}(s)\bigr) \Bigr)ds \\
&=\frac{1}{2\pi  }|x|^{-c}\Bigl(\zeta_{X,u}(c)+\zeta_{X,u'}(c)\Bigr)\biggl(\int_{-1}^{1 }\bigl|\Gamma(c+it)\bigr| dt+\int_{]-\infty,-1]\cup[1,\infty[}\bigl|\Gamma(c+it)\bigr| dt \biggr)\\
&=\frac{1}{2\pi  }|x|^{-c}\Bigl(\zeta_{X,u}(c)+\zeta_{X,u'}(c)\Bigr)\biggl(\int_{-1}^1 \bigl|\Gamma(c+it) \bigr|dt+\int_{]-\infty,-1]\cup[1,\infty[}\prod_{k=0}^{c-1}\sqrt{k^2+t^2}\bigl|\Gamma(it)\bigr|dt\biggr)\\
&=\frac{1}{2\pi  }|x|^{-c}\Bigl(\zeta_{X,u}(c)+\zeta_{X,u'}(c)\Bigr)\biggl(\int_{-1}^1 \bigl|\Gamma(c+it) \bigr|dt+\int_{]-\infty,-1]\cup[1,\infty[}\prod_{k=0}^{c-1}\sqrt{k^2+t^2}\frac{\sqrt{\pi}}{\sqrt{|t|}\sqrt{|\sinh(\pi t)|}}\bigr| dt\biggr)\\
&\quad \text{see formula}\;\cite[6.1.29]{Table2}.
\end{align*}}
We verify that the last  integral is convergent.  Since $\bigl(\zeta_{X,u}\bigr)_{u\geq 1}$ converges uniformly on any  domain of the  form $\{s\in \C\,|\, \mathrm{Re}(s)\geq \delta+1\}$. Then, there exists constant  $K$ which depend only on $c$, such that:
\[
 \bigl|\widetilde{\theta}_u(x)-\widetilde{\theta}_{u'}(x) \bigr|\leq K |x|^{-c}, \quad\forall \,x\in \C\,s.t.\, \mathrm{Re}(x)>0.
\]
We conclude that
\[
\bigl|\widetilde{\rho}_u(x)-\widetilde{\rho}_{u'}(x) \bigr|\leq K |x|^{-c}+\bigl|a_{u,-1}-a_{u',-1} \bigr||x|^{-1}, \quad\forall \,x\in \C\,s.t.\, \mathrm{Re}(x)>0.
\]
Let $r>0$ fixed. We denote by $D$ the curve in the complex plan, given by  $r e^{i\al} $, such that $-\frac{\pi}{2}\leq\al \leq
\frac{\pi}{2}$.

  If we  replace $x$ by $x^2$ in the above inequality, and we consider  $k$, an integer greater that $1$, then we get
\begin{align*}
 \biggl|\int_D\frac{\widetilde{\rho}_u(x^2)-\widetilde{\rho}_{u'}(x^2)}{x^{2k}}dx \biggr|\leq K \int_D
 |x|^{-2c-2k}dx+\bigl|a_{u,-1}-a_{u',-1} \bigr|\int_D |x|^{-2-2k}dx,
\end{align*}
this is yields to
\begin{align*}
 \bigl|a_{u,k}-a_{u',k} \bigr|\leq K  r^{-2c-2k}+\bigl|a_{u,-1}-a_{u',-1} \bigr|r^{-2-2k}\quad\forall\, u,u'\; \forall \,k\in \N_{\geq 1},
\end{align*}
(we have used the following fact: $\int_D x^{2(j-k)}dx=\delta_{k,j}\pi$, and recall that $a_{u,0}$ does not depend on $u$).

Therefore, if we take $0<t<r$, we get
{\allowdisplaybreaks
\begin{align*}
 \bigl|\rho_{X,u}(t^2) \bigr|&\leq \sum_{k\geq 1} \bigl|a_{u,k}\bigr|t^{2k}\\
&\leq \sum_{k\geq 1}\bigl| a_{u,k}-a_{u',k}\bigr|t^{2k}+\sum_{k=1}^\infty |a_{u',k}|t^{2k}\\
&\leq \sum_{k\geq 1}Kr^{-2c} \Bigl(\frac{t^2}{r^2}\Bigr)^{k}+ \sum_{k\geq 1}r^{-2} \Bigl(\frac{t^2}{r^2}\Bigr)^{k}\bigl|a_{u,-1}-a_{u',-1} \bigr|+\sum_{k=1}^\infty |a_{u',k}|t^{2k}\\
&\leq K r^{-2c}\frac{t^2}{r^2-t^2}+ r^{-2}\frac{t^2}{r^2-t^2}\bigl|a_{u,-1}-a_{u',-1} \bigr|+\sum_{k=1}^\infty |a_{u',k}|t^{2k}.
\end{align*}}

Now, recall that $(a_{u,-1})_{u\geq 1}$ is bounded, see \eqref{a_uuu}. If we fix $u'$, we can find a constant $K'$ and a real  $t_0>0$ such that
\[
 \sum_{k=1}^\infty |a_{u',k}|t^{2k}\leq K'\frac{t^2}{1-t^2}\quad \forall \, t\in[0,t_0 ].
\]
We conclude there exists a  constant $K''$ such that:
\begin{align*}
 \bigl|\rho_{X,u}(t) \bigr|\leq K'' t,\quad \forall\, u\gg 1,\; \forall\, 0\leq t\leq \min(\sqrt{r},t_0).
\end{align*}
But, we proved that  $\rho_u$ converges pointwise to $\rho_\infty$, then
\[
 \bigl|\rho_{X,\infty}(t)\bigr|\leq K'' t,\quad \forall\,  0\leq t\leq \min(\sqrt{r},t_0).
\]
that is $\rho_{X,\infty}(t)=O(t)$. As a first consequence, $\zeta_{X,\infty} $ admits a holomorphic continuation at   $s=0$, and
\[
 \zeta'_{X,\infty}(0)=\int_1^\infty \frac{\theta_{X,\infty}(t)}{t}dt+a_{\infty,-1}+a_{\infty,0}+\int_0^1\frac{\rho_{X,\infty}(t)}{t}dt.
\]
Since,
\[
 \theta_{X,u}(t)\leq \theta_{X,u}(1)e^{-\la_{u,1}(t-1)}\quad \forall t\geq 1.
\] and from  \eqref{lowerbound}, we can find a constant $\kappa>0$ such that $\la_{u,1}\geq\kappa$ for any $u\geq 1$. Recall that $\bigl(\theta_{X,u}(1)\bigr)_{u}$ is bounded. Therefore, there exists $M>0$ such that
 \[
 \theta_{X,u}(t)\leq M e^{-\kappa t}\quad \forall t\geq 1.
\]
Also, we proved that
\[
 \bigl|\rho_{X,u}(t)\bigr|\leq K'' t,\quad \forall u\gg 1\;\forall\,  0\leq t\leq \min(\sqrt{r},t_0).
\]

We deduce,  using the dominated  convergence theorem, that
\[
 \biggl(\int_1^\infty\frac{\theta_{X,u}(t)}{t}dt\biggr)_u\xrightarrow[u\mapsto \infty]{}\int_1^\infty \frac{\theta_{X,\infty}(t)}{t}dt,
\]
and
\[
\biggl(\int_0^1 \frac{\rho_{X,u}(t)}{t}dt\biggr)_{u}\xrightarrow[u\mapsto \infty]{} \int_0^1\frac{\rho_{X,\infty}(t)}{t}dt.
\]
Then,
\[
 \bigl( \zeta'_{X,u}(0)\bigr)_{u\geq 1}\xrightarrow[u\mapsto \infty]{}\zeta'_{X,\infty}(0).
\]
In particular,
\begin{equation}\label{lllllll}
 \bigl( \zeta'_{X,p}(0)\bigr)_{p\in \N }\xrightarrow[p\mapsto \infty]{}\zeta'_{X,\infty}(0).
\end{equation}

\end{proof}

\begin{theorem}\label{convergenceAnomaly}
We keep the same assumptions. For any $p\in \N$, let $ h_{Q,((X,\omega_{X,p});(\mathcal{O},h_{\mathcal{O}})
)}$ be the Quillen metric associated to $\bigl((X,\omega_{X,p});(\mathcal{O},h_{\mathcal{O}}) \bigr)$. We have, the sequence $\Bigl(
h_{Q,((X,\omega_{X,p});(\mathcal{O},h_{\mathcal{O}}) )}\Bigr)_{p\in \N}$ converges to a limit, which does not depend on the choice of
$\bigl(h_{X,p}\bigr)_{p\in \N}$.  We denote this limit by $h_{Q,((X,\omega_{X,\infty});(\mathcal{O},h_{\mathcal{O}})
)}$.
\end{theorem}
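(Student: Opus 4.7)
My plan is to split the Quillen metric into its two defining factors,
$$h_{Q,((X,\omega_{X,p});(\mathcal{O},h_{\mathcal{O}}))} = h_{L^2,((X,\omega_{X,p});(\mathcal{O},h_{\mathcal{O}}))}\cdot\exp\bigl(-\zeta'_{X,p}(0)\bigr),$$
a metric on the determinant line $\lambda(\mathcal{O}) := \det H^0(X,\mathcal{O})\otimes\det H^1(X,\mathcal{O})^{-1}$, and verify that each factor converges as $p\to\infty$ to a limit depending only on $h_{X,\infty}$.

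For the analytic torsion factor, I would appeal to theorem \ref{key2}. Starting from any smooth admissible approximating sequence $(h_{X,p})_{p\in\N}$, one forms the associated interpolation family $(h_{X,u})_{u\geq 1}$; restricting the conclusion of \ref{key2} to integer values of $u$ then gives $\zeta'_{X,p}(0)\to\zeta'_{X,\infty}(0)$. Since $\zeta'_{X,\infty}(0)$ is defined intrinsically through the singular operator $\Delta_{X,\infty}$, it depends only on $h_{X,\infty}$ and $h_{\mathcal{O}}$, so $\exp(-\zeta'_{X,p}(0))$ converges to a sequence-independent limit.

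For the $L^2$ factor, I would analyze separately the norms induced on $H^0$ and $H^1$. On $H^0(X,\mathcal{O})=\C\cdot 1$ one has $\|1\|^2_{L^2,p}=\int_X\omega_{X,p}=\mathrm{vol}_p(X)\to\mathrm{vol}_\infty(X)$ by uniform convergence of $(h_{X,p})$ and the dominated convergence theorem. On $H^1(X,\mathcal{O})$, Hodge theory identifies a class with its harmonic representative $\bar\gamma$, with $\gamma=g\,dz\in H^0(X,K_X)$ holomorphic; this identification is metric-independent, since the equation $\overline{\partial}^{*}\bar\gamma=0$ reduces to the holomorphicity of $g$ regardless of $h_{X,p}$. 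Substituting $\omega_{X,p}=\tfrac{i}{2\pi}h_{X,p}(\dif,\dif)\,dz\wedge d\z$ in the $L^2$-pairing causes the metric factor $h_{X,p}(\dif,\dif)^{-1}$ in the pointwise norm to cancel, yielding
$$\|\bar\gamma\|^2_{L^2,p}=\frac{i}{2\pi}\int_X g\,dz\wedge\overline{g\,dz},$$
which is independent of $p$. Consequently $h_{L^2,p}$ on $\lambda(\mathcal{O})$ converges to a limit depending only on $\mathrm{vol}_\infty(X)$ and the intrinsic pairing on holomorphic $1$-forms.

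Taking the product of the two convergent factors establishes the convergence of $h_{Q,((X,\omega_{X,p});(\mathcal{O},h_{\mathcal{O}}))}$; by construction the limit depends only on the data $((X,\omega_{X,\infty});(\mathcal{O},h_{\mathcal{O}}))$, hence not on the approximating sequence. The main obstacle I anticipate is checking that theorem \ref{key2} really does apply to the interpolation family attached to an arbitrary smooth approximation of $h_{X,\infty}$, which amounts to verifying that the decay estimates on $\delta_X(u)$ underlying its proof depend only on the uniform convergence rate of $(h_{X,p})$ and not on any distinguished choice of approximation.
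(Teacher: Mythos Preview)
Your argument is correct, but it takes a different route from the paper's own proof. The paper proves theorem~\ref{convergenceAnomaly} \emph{without} invoking theorem~\ref{key2}: it applies the Bismut--Gillet--Soul\'e anomaly formula directly,
\[
-\log h_{Q,((TX,h_{X,p});(\mathcal{O},h_{\mathcal{O}}))}+\log h_{Q,((TX,h_{X,q});(\mathcal{O},h_{\mathcal{O}}))}=\int_X ch(\mathcal{O},h_{\mathcal{O}})\,\widetilde{\mathrm{Td}}(TX,h_{X,p},h_{X,q}),
\]
computes the secondary Todd class explicitly, and bounds the right-hand side by a constant times $\bigl\|\log(h_{X,p}/h_{X,q})\bigr\|_{\sup}$. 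This shows at once that the sequence of Quillen metrics is Cauchy and that the limit is independent of the approximating sequence, using only uniform convergence and no spectral input whatsoever.

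Your decomposition into the $L^2$ factor and the torsion factor is exactly how the paper proves the \emph{next} result, theorem~\ref{compare2methods}; your treatment of $H^1(X,\mathcal{O})$ is the content of lemmas~\ref{kerH1} and~\ref{vol1H}. So what you have written is really a proof of \ref{convergenceAnomaly} and \ref{compare2methods} simultaneously, at the cost of importing the full strength of theorem~\ref{key2}. The paper's route is more elementary and keeps \ref{convergenceAnomaly} logically independent of the spectral analysis; yours is heavier but yields the identification of the limit for free. The obstacle you flag is genuine but harmless: given two approximating sequences, interleave them and reparametrize so that $\delta_X(u)$ is summable (cf.\ \eqref{deltaXU}); since the limit $\zeta'_{X,\infty}(0)$ is intrinsic to $\Delta_{X,\infty}$, both subsequences of the interleaved sequence must converge to the same value.
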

\begin{proof}
By the anomaly formula, see \cite{BGS1}, we have for any $p,q\in \N$:
\[
\begin{split}
 -\log h_{Q,((TX,th_{X,p}); (\mathcal{O},h_{\mathcal{O}}))}+\log h_{Q,((TX,h_{X,q}); (\mathcal{O},h_{\mathcal{O}}))}&=\int_X
 ch(\mathcal{O},h_{\mathcal{O}})\widetilde{\mathrm{Td}}(TX,h_{X,p},h_{X,q}).
\end{split}
\]
So, it suffices to prove that the right hand side forms a Cauchy sequence. We have:

\[
\widetilde{\mathrm{Td}}\bigl(TX,h_{X,p},h_{X,q}\bigr)=-\frac{1}{2}\log\biggl( \frac{h_{X,p}}{h_{X,q}}\biggr)-\frac{1}{12}\log\biggl(\frac{h_{X,p}}{h_{X,q}}\biggr)\Bigl(c_1(TX,h_{X,p})+c_1(TX,h_{X,q}) \Bigr).
\]
in $\oplus_{j}\widetilde{A}^{j,j}(X)$, that is the algebra of $(\ast,\ast)$-differential forms on $X$,  modulo $\mathrm{Im}\, \overline{\partial}+\mathrm{Im}\,\partial$, see \cite{character2}.

Then,
\[
\int_X
 ch(\mathcal{O},h_{\mathcal{O}})\widetilde{\mathrm{Td}}(TX,h_{X,p},h_{X,q})=-\frac{1}{12}\int_X
 \log\biggl(\frac{h_{X,p}}{h_{X,q}}\biggr)\Bigl(c_1(TX,h_{X,p})+c_1(TX,h_{X,q}) \Bigr)
\]
Since, for any $n\in \N$, there exist $(h_{n,1})_{n\in \N}$ and $(h_{n,2})_{n\in \N}$ two smooth positive metrics such that
$h_{X,n}=h_{n,1}\otimes h_{n,2}^{-1}$ for any $n\in \N$. Then, we can find a constant $K$, such that:
\[
\Bigl| \int_X
 ch(\mathcal{O},h_{\mathcal{O}})\widetilde{\mathrm{Td}}(TX,h_{X,p},h_{X,q}) \Bigr|\leq  K\biggl\|\log \bigl(\frac{h_{X,p}}{h_{X,q}}\bigr)\biggr\|_{\sup}.
\]
It follows that $\Bigl(
h_{Q,((X,\omega_{X,p});(\mathcal{O},h_{\mathcal{O}}) )}\Bigr)_{p\in \N}$ admits a finite limit. From the previous  inequality, it is clear that the limit
does not depend on the choice of $(h_{X,p})_{p\in \N}$.
\end{proof}

The last theorem extend the notion of Quillen metrics to integrable metrics on compact riemannian surfaces, by using an
approximation process. However,  theorem \eqref{key2} provides a direct way to define the holomorphic analytic torsion in this
situation. It is important to compare both methods, this is given in the following theorem:
\begin{theorem}\label{compare2methods}
We have,
\[
h_{Q,((X,\omega_{X,\infty});(\mathcal{O},h_{\mathcal{O}}) )}=h_{L^2,((X,\omega_{X,\infty});(\mathcal{O},h_{\mathcal{O}}) )}\exp\bigl(\zeta_{X,\infty}'(0)\bigr).
\]
\end{theorem}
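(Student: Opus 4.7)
The plan is to reduce the claim to its smooth-metric analogue and then pass to the limit along the approximating sequence $(h_{X,p})_{p\in \N}$. For each $p\in \N$, the metric $h_{X,p}$ is smooth, so by the classical definition of the Quillen metric one has
\[
h_{Q,((X,\omega_{X,p});(\mathcal{O},h_{\mathcal{O}}))} = h_{L^2,((X,\omega_{X,p});(\mathcal{O},h_{\mathcal{O}}))}\exp\bigl(\zeta'_{X,p}(0)\bigr).
\]
Thus it suffices to check that each of the three factors converges, as $p\to\infty$, to its would-be $\infty$-counterpart. The convergence of the left-hand side to $h_{Q,((X,\omega_{X,\infty});(\mathcal{O},h_{\mathcal{O}}))}$ is precisely theorem \ref{convergenceAnomaly}, and the convergence $\zeta'_{X,p}(0)\to \zeta'_{X,\infty}(0)$ is the last line of theorem \ref{key2}, recorded in \eqref{lllllll}.

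It remains to show that $h_{L^2,((X,\omega_{X,p});(\mathcal{O},h_{\mathcal{O}}))}$ converges to $h_{L^2,((X,\omega_{X,\infty});(\mathcal{O},h_{\mathcal{O}}))}$, and this is the step I would treat most carefully. Because $\mathcal{O}$ is the trivial line bundle, the underlying vector spaces $H^0(X,\mathcal{O})\simeq \C$ and $H^1(X,\mathcal{O})\simeq \C^g$ are metric-independent, so the dependence of $h_{L^2}$ on $h_X$ is purely numerical. For $H^0$, a basis is $\{1\}$ and $\|1\|^2_{L^2,p}=\mathrm{vol}_p(X)=\int_X \omega_{X,p}$, which converges to $\mathrm{vol}_\infty(X)$ by uniform convergence of $h_{X,p}$ and compactness of $X$. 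For $H^1(X,\mathcal{O})$, Serre duality together with a brief local Hodge-theoretic computation shows that the induced $L^2$-metric is the one transferred from the $L^2$-metric on holomorphic $1$-forms $H^0(X,K_X)$, given by the manifestly metric-independent formula $(\alpha,\beta)\mapsto\frac{i}{2\pi}\int_X \alpha\wedge\bar\beta$ (the Hodge-star and the volume form contribute inverse powers of $h_X(\partial/\partial z,\partial/\partial z)$ that cancel). Hence $h_{L^2}$ depends on $h_X$ only through the single factor $\mathrm{vol}_p(X)$, and its convergence is automatic.

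Combining the three convergences and taking $p\to\infty$ in the smooth identity yields the theorem. The main potential obstacle is the third item, namely the continuity of the $L^2$-metric on $\det H^\bullet(X,\mathcal{O})$ under uniform convergence of $\omega_{X,p}$; once one has the above reduction to convergence of volumes via Serre duality, the remainder of the argument is purely a passage to the limit in the previously established statements.
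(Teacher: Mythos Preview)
Your proof is correct and follows essentially the same route as the paper: start from the smooth identity, invoke theorem \ref{convergenceAnomaly} and \eqref{lllllll} for two of the three factors, and reduce the $L^2$-metric convergence to the observation that the induced metric on $H^1(X,\mathcal{O})\cong H^0(X,K_X)$ is actually independent of $h_X$ (the paper packages this last point as Lemmas \ref{kerH1} and \ref{vol1H}, using the Hodge star $\ast_1$ rather than the phrase ``Serre duality'', but the computation is the same cancellation you describe).
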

\begin{proof}
We have, for any $p\in \N_{\geq 1}$:
\[
h_{Q,((X,\omega_{X,p});(\mathcal{O},h_{\mathcal{O}}))}=h_{L^2,((X,\omega_{X,p});(\mathcal{O},h_{\mathcal{O}}))}\exp\bigl({\zeta_{X,p}'(0)}\bigr).
\]
According to \eqref{lllllll},  it suffices  to prove that
$\bigl( h_{L^2,((X,\omega_{X,p});(\mathcal{O},h_{\mathcal{O}}) )}\bigr)_{p\in \N}$ converges to $h_{L^2,((X,\omega_{X,\infty});(\mathcal{O},h_{\mathcal{O}}) )}$. Indeed, we
have
$ h_{L^2,((X,\omega_{X,p});(\mathcal{O},h_{\mathcal{O}}) )}=\mathrm{Vol}_{L^2,p}\bigl( H^0(X,\mathcal{O})\bigr) \mathrm{Vol}_{L^2,p}\bigl( H ^1(X,\mathcal{O})\bigr)^{-1} $,
for any $p\in \N$.
One verify easily the  convergence of $\bigl(\mathrm{Vol}_{L^2,p}\bigl( H^0(X,\mathcal{O})\bigr)\bigr)_{p\in \N}$ to
$\mathrm{Vol}_{L^2,\infty}\bigl( H^0(X,\mathcal{O})\bigr)$. The convergence of $\bigl(\mathrm{Vol}_{L^2,p}\bigl( H ^1(X,\mathcal{O})\bigr)\bigr)_{p\in
\N}$ follows from \eqref{vol1H}.
\end{proof}
We need the following lemma, in order to prove \eqref{vol1H}:
\begin{lemma}\label{kerH1}
Let $\bigl(X,\omega_X)$ be a compact riemannian surface. Let $\overline{\mathcal{O}}=\bigl(\mathcal{O},h_{\mathcal{O}}\bigr)$ a holomorphic line bundle endowed
with a constant hermitian metric $h_{\mathcal{O}}$. We let $K_X=\Omega_X^{(1,0)}$ which we equip with  the induced metric from $\omega_X$.  We have:
\[
 \Delta^0_{{}_{K_X}}\ast_{1}=\ast_{1}\Delta^1_{\mathcal{O}}.
\]
where $\Delta_{\bullet}^\ast$ is the Laplacian acting on $A^{(0,\ast)}\bigl(X,\bullet\bigr)$. In particular,
\[
 \ker\bigl(\Delta^1_\mathcal{O}\bigr)=\ast^{-1}_{1}\Bigl(H^0\bigl(X,K_X\bigr) \Bigr).
\]
\end{lemma}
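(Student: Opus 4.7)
The plan is to verify the operator identity $\Delta^0_{K_X}\ast_1=\ast_1\Delta^1_{\mathcal{O}}$ by a local computation, and then to read off the kernel statement from it by invoking standard Hodge theory for $\Delta^0_{K_X}$.

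First, I would exploit the fact that $X$ is a Riemann surface, so $A^{(0,2)}(X,\bullet)=0$. Consequently $\bar{\partial}_{\mathcal{O}}$ is identically zero on $A^{(0,1)}(X,\mathcal{O})$, whence $\Delta^1_{\mathcal{O}}=\bar{\partial}_{\mathcal{O}}\bar{\partial}^{\ast}_{\mathcal{O}}$ on $A^{(0,1)}(X,\mathcal{O})$; for the same dimensional reason $\Delta^0_{K_X}=\bar{\partial}^{\ast}_{K_X}\bar{\partial}_{K_X}$ on $A^{(0,0)}(X,K_X)$. Both sides of the desired identity are therefore second-order local differential operators, so it is enough to check the equality in a holomorphic chart $z$ on $X$.

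Next, fix $\alpha=g\,d\bar z\in A^{(0,1)}(X)$. Then $\ast_1\alpha=-\bar g\,dz$ is a local section of $K_X$. Using the identity $\bar{\partial}^{\ast}_{\mathcal{O}}=-\ast_0^{-1}\bar{\partial}_{K_X\otimes\mathcal{O}^{\ast}}\ast_1$ recalled in Section~\ref{rappelLAPCLA}, together with the explicit expressions $\ast_0(g)=\bar g\,\omega_X$ and $\ast_1(g\,d\bar z)=-\bar g\,dz$, I would compute $\ast_1\,\bar{\partial}_{\mathcal{O}}\bar{\partial}^{\ast}_{\mathcal{O}}\alpha$ directly. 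In parallel I would apply $\bar{\partial}^{\ast}_{K_X}\bar{\partial}_{K_X}$ to $-\bar g\,dz$, using the analogous formula $\bar{\partial}^{\ast}_{K_X}=-\ast_0^{-1}\bar{\partial}_{K_X\otimes K_X^{\ast}}\ast_1$ (noting that $K_X\otimes K_X^{\ast}$ is trivial, so $\bar{\partial}_{K_X\otimes K_X^{\ast}}$ is just $\bar{\partial}$ on functions). Both computations should reduce to the same local expression of the form (coefficient in $h_X(\partial_z,\partial_z)^{-1}$)$\cdot\partial_z\partial_{\bar z}$ applied to $\bar g$, modulo the appropriate twist by the induced metric on $K_X$. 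This yields the intertwining relation.

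Finally, the kernel statement follows formally. Because $\ast_1$ is a fibrewise (conjugate-linear) isomorphism, the intertwining gives $\ker(\Delta^1_{\mathcal{O}})=\ast_1^{-1}\bigl(\ker(\Delta^0_{K_X})\bigr)$. For any smooth section $\beta$ of $K_X$ one has $(\Delta^0_{K_X}\beta,\beta)_{L^2}=\|\bar{\partial}_{K_X}\beta\|_{L^2}^{2}$, so $\ker(\Delta^0_{K_X})=\ker(\bar{\partial}_{K_X})=H^0(X,K_X)$, proving $\ker(\Delta^1_{\mathcal{O}})=\ast_1^{-1}\bigl(H^0(X,K_X)\bigr)$. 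The main obstacle is purely bookkeeping: keeping track of signs, complex conjugations and the $\tfrac{i}{2\pi}$ coming from $\omega_X=\tfrac{i}{2\pi}h_X(\partial_z,\partial_z)\,dz\wedge d\bar z$ throughout Step~2, so that the two differential expressions match on the nose rather than only up to a constant.
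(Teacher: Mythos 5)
Your proposal is correct and follows essentially the same route as the paper: both reduce to $\Delta^0_{K_X}=\bar{\partial}^{\ast}_{K_X}\bar{\partial}_{K_X}$ and $\Delta^1_{\mathcal{O}}=\bar{\partial}_{\mathcal{O}}\bar{\partial}^{\ast}_{\mathcal{O}}$ on a curve, verify the intertwining through the adjoint formula $\bar{\partial}^{\ast}=-\ast^{-1}\bar{\partial}\ast$ (the paper does this as a short formal chain of star-operator identities, you as the equivalent local coordinate computation), and then conclude via $\ker\Delta^0_{K_X}=\ker\bar{\partial}_{K_X}=H^0(X,K_X)$ and the injectivity of $\ast_1$.
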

\begin{proof}
Let $\xi\in A^{(0,1)}\bigl(X)$. We have (see paragraph \eqref{rappelLAPCLA} for the notations):
\begin{align*}
\Delta_{{}_{K_X}}\bigl(\ast_{1}\xi \bigr)&=\overline{\pt}_{{}_{K_X}}^\ast \overline{\pt}_{{}_{K_X}}\bigl( \ast_{1}\xi\bigr)\\
&=\ast_{1}\overline{\partial}_{\mathcal{O}}\ast_{0}^{-1}\overline{\pt}_{K_X} \ast_{1}\xi\\
&=\ast_{1}\overline{\partial}_{\mathcal{O}}\overline{\partial}_{\mathcal{O}}^\ast \xi\\
&=\ast_{1}\Delta^1_{\mathcal{O}}\xi.
\end{align*}
Since $\ker \Delta^0_{K_X}=H^0\bigl(X,K_X \bigr)$, then
\[
\ker \Delta^1_{\mathcal{O}}=\ast_{1}^{-1}\Bigl(H^0\bigl(X,K_X\bigr) \Bigr).
\]
\end{proof}
\begin{lemma}\label{vol1H}
Let $X$ compact riemannian surface. Let $\bigl(h_p\bigr)_{p\in \N}$ be a sequence of continuous hermitian metrics which converges uniformly to $h_{X,\infty}$ on  $TX$.

 We have
 \[
 \Bigl(\mathrm{Vol}_{L^2,p}\bigl( H ^1(X,\mathcal{O})\bigr) \Bigr)_{p\in \N}\xrightarrow[p\mapsto \infty]{}\mathrm{Vol}_{L^2,\infty}\bigl( H ^1(X,\mathcal{O})\bigr).
 \]
\end{lemma}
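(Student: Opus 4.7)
The plan is to leverage Lemma \eqref{kerH1} to identify $H^1(X,\mathcal{O})$ with a fixed, metric-independent subspace of $A^{(0,1)}(X)$, and then observe that the $L^2_{X,p}$-inner product restricted to this subspace is in fact independent of $p$. The main feature driving the argument is an elementary cancellation between the pointwise hermitian norm on $(0,1)$-forms and the volume form $\omega_{X,p}$.

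First, by Lemma \eqref{kerH1} applied both to each $h_p$ and to $h_{X,\infty}$, the space of $h_p$-harmonic $(0,1)$-forms coincides with $\mathcal{H}:=\ast_1^{-1}\bigl(H^0(X,K_X)\bigr)$. The explicit local formula \eqref{starhodge}, namely $\ast_1(g\,d\bar z)=-\bar g\,dz$, shows that $\ast_1$ is defined purely by the complex structure of $X$ and does not depend on the hermitian metric. Hence $\mathcal{H}\subset A^{(0,1)}(X)$ is a fixed finite-dimensional subspace, furnishing a canonical family of harmonic representatives for classes in $H^1(X,\mathcal{O})$ simultaneously for every $p\in\N\cup\{\infty\}$.

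Next, I would compute the $L^2_{X,p}$-inner product on $\mathcal{H}$ in a local holomorphic chart with coordinate $z$. Writing $\eta=f\,d\bar z$ and $\eta'=g\,d\bar z$, one has pointwise $(\eta,\eta')_p=h_p\!\bigl(\dif,\dif\bigr)^{-1}\!f\bar g$, while $\omega_{X,p}=\frac{i}{2\pi}h_p\!\bigl(\dif,\dif\bigr)\,dz\wedge d\bar z$. Their product is
\[
(\eta,\eta')_p\,\omega_{X,p}=\frac{i}{2\pi}\,f\bar g\,dz\wedge d\bar z,
\]
in which the factor $h_p\!\bigl(\dif,\dif\bigr)$ cancels. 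Gluing with a partition of unity, the Gram matrix of any fixed basis of $\mathcal{H}$ with respect to $(\cdot,\cdot)_{L^2,p}$ does not depend on $p$.

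Consequently $\mathrm{Vol}_{L^2,p}\bigl(H^1(X,\mathcal{O})\bigr)$ is already constant in $p$, and the claimed convergence to $\mathrm{Vol}_{L^2,\infty}\bigl(H^1(X,\mathcal{O})\bigr)$ is immediate. The only step that requires care is verifying that Lemma \eqref{kerH1} remains valid for the integrable limit $h_{X,\infty}$, but this amounts to checking that the formal identity $\Delta^0_{K_X}\ast_1=\ast_1\Delta^1_{\mathcal{O}}$ holds pointwise, which follows from the local expressions of the Laplacians and the metric-independence of $\ast_1$. There is therefore no serious obstacle beyond assembling these observations.
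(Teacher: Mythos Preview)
Your proposal is correct and follows essentially the same route as the paper: both use \eqref{starhodge} to see that $\ast_1$ is metric-independent, invoke Lemma \eqref{kerH1} to identify the harmonic $(0,1)$-forms with $\ast_1^{-1}\bigl(H^0(X,K_X)\bigr)$, and then observe the cancellation between the pointwise norm and the volume form so that the $L^2$-pairing on this space is independent of $p$. Your write-up is in fact slightly sharper in emphasizing that the volume is literally constant in $p$, not merely convergent.
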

\begin{proof}
From \eqref{starhodge}, we see that $\ast_{1}$ does not depend on the metric of $X$.
Let $\xi,\xi'\in A^{(0,1)}\bigl(X,K_X\bigr)$. Since $X$ is compact, we may assume that $\xi=g \,dz$
and $\xi'=fdz$, where
$g,f\in A^{(0,0)}\bigl(X\bigr)$ and $z$ is a local coordinate. We have for any $p,q\in \N$
\[
\ast_{1,p}^{-1}\bigl(g \,dz\bigr)=\ast_{1,q}^{-1}\bigl(g\, dz\bigr),
\]
and,
\begin{align*}
\bigl(\ast_{1,p}^{-1}\xi,\ast_{1,p}^{-1}\xi' \bigr)_{L^2_{X,q}}&=\frac{i}{2\pi}\int_X \overline{g}f\,dz\wedge d\z.
\end{align*}
Therefore, obviously we have
\[
\Bigl(\bigl(\ast_{1,p}^{-1}\xi,\ast_{1,p}^{-1}\xi' \bigr)_{L^2_{X,p}}\Bigr)_{p\in \N}\xrightarrow[p\mapsto \infty]{}\bigl(\ast_{1,\infty}^{-1}\xi,\ast_{1,\infty}^{-1}\xi' \bigr)_{L^2_{X,\infty}}.
\]
By  proposition \eqref{kerH1}, we have \[\ker
\Delta_{X,p}^1=\ast_{1}^{-1}\Bigl(H^0(X,K_X) \Bigr)\quad \forall\, p\in \N_{\geq 1}.\]
Then,
 \[
 \Bigl(\mathrm{Vol}_{L^2_{X,p}}\bigl( H ^1(X,\mathcal{O})\bigr) \Bigr)_{p\in \N}\xrightarrow[p\mapsto \infty]{}\mathrm{Vol}_{L^2_{X,\infty}}\bigl( H ^1(X,\mathcal{O})\bigr).
 \]

\end{proof}

We conclude that:
\[
\Bigl( h_{L^2,((X,\omega_{X,p});(\mathcal{O},h_{\mathcal{O}}) )}\Bigr)_{p\in \N} \xrightarrow[p\mapsto \infty]{} h_{L^2,((X,\omega_{X,\infty});(\mathcal{O},h_{\mathcal{O}}) )}.
\]

\section{Appendix 1}\label{rappelclassic}

\subsection{Integrable metrics}\label{rappelmetint}
Let $X$ be a complex analytic manifold and $\overline{L}=(L,\vc)$ holomorphic line bundle equipped with a continuous hermitian
metric on $L$.
\begin{definition}
\rm{We call first   Chern current of $\overline{L}$, and we denote it by $c_1\bigl( \overline{L}\bigr)\in D^{(1,1)}(X)$, the
$(1,1)$-current defined locally by the following formula:
\[
c_1\bigl(\overline{L}\bigr)=c_1(L,\vc)=dd^c\bigl( -\log \|s\|^2\bigr),
\]
where $s$ is  a nonzero holomorphic  local section of  $L$.}
\end{definition}

\begin{definition}
\rm{The metric $\vc$ is positive if $c_1\bigl(L,\vc\bigr)\geq 0$.}
\end{definition}
\begin{definition}\label{admissibledefinition}
\rm{We say  $\vc$ is  admissible if there exists a sequence  $\bigl(\vc_n \bigr)_{n\in \N}$ of smooth positive hermitian metrics
 converging uniformly to  $\vc$ on $L$. An admissible line bundle on  $X$ is a holomorphic line bundle equipped with
an admissible metric.}
\end{definition}
We say $\overline{L}$ is an integrable line bundle if there exist $\overline{L}_1$ et $\overline{L}_2$ two  admissible line
bundles such that:
\[
 \overline{L}=\overline{L}_1\otimes \overline{L}_2^{-1}.
\]

\subsection{Compacts operators}\label{paragrapheOpcompacts}
Consider two Hilbert spaces $\mathcal{H}$ and $\mathcal{H}'$. We denote by
$L(\mathcal{H},\mathcal{H}')$ the space of continuous
linear maps from $\h$ to $\h'$.

We say that $T$ in $L(\h,\h')$ is a compact operator if
and only if the image of every bounded subset of $\h$ by
$T$ is relatively compact in $\h'$. An operator $T$ is
said ton be finite rank operator if its image has a
finite dimension. In particular, it is a compact
operator. The dimension of its image is called the rank
of the operator.
We denote by $<,>$ the inner product of $\h$, and by
$\|\cdot\|$ the associated norm.
\begin{proposition}\label{operateurcompact}
Let $\mathcal{H}$ be a  Hilbert space, $\mathcal{B}(\mathcal{H})$ the space of bounded linear operators and
$\mathcal{K}(\mathcal{H})$ the space of compacts operators. We have, $\mathcal{K}(\mathcal{H})$ is a closed linear subspace of
$\mathcal{B}(\mathcal{H})$.
\end{proposition}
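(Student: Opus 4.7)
The plan is to establish the two assertions separately: that $\mathcal{K}(\mathcal{H})$ is stable under linear combinations, and that it is closed in $\mathcal{B}(\mathcal{H})$ for the operator norm. Throughout I would use the sequential characterization: in a Hilbert space, $T \in \mathcal{B}(\mathcal{H})$ is compact if and only if for every bounded sequence $(x_n)$ in $\mathcal{H}$, the sequence $(T x_n)$ admits a convergent subsequence.

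For the linear subspace property, I would take $S, T \in \mathcal{K}(\mathcal{H})$, a scalar $\lambda \in \mathbb{C}$, and a bounded sequence $(x_n)$. Applying compactness of $S$ gives a subsequence $(x_{n_k})$ with $(S x_{n_k})$ convergent. Since $(x_{n_k})$ is still bounded, a further application of the compactness of $T$ yields a subsequence $(x_{n_{k_j}})$ such that $(T x_{n_{k_j}})$ converges as well. On this common subsequence, $(\lambda S + T) x_{n_{k_j}}$ converges, so $\lambda S + T \in \mathcal{K}(\mathcal{H})$.

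For closedness, I would take $(T_n)_{n \in \mathbb{N}} \subset \mathcal{K}(\mathcal{H})$ converging in operator norm to some $T \in \mathcal{B}(\mathcal{H})$, and a bounded sequence $(x_k)$ with $\sup_k \|x_k\| \leq M$. The main step is a diagonal extraction: using compactness of $T_1$, extract a subsequence $(x_k^{(1)})$ such that $(T_1 x_k^{(1)})$ converges; inductively, using compactness of $T_{n+1}$, extract from $(x_k^{(n)})$ a further subsequence $(x_k^{(n+1)})$ such that $(T_{n+1} x_k^{(n+1)})$ converges. Set $y_k := x_k^{(k)}$. Then for every fixed $n$, $(T_n y_k)_k$ converges.

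It then remains to show $(T y_k)$ is Cauchy; this is the only delicate estimate and I would write it as
\[
\|T y_k - T y_l\| \;\leq\; \|T - T_n\| \cdot \|y_k - y_l\| + \|T_n y_k - T_n y_l\| \;\leq\; 2M \|T - T_n\| + \|T_n y_k - T_n y_l\|.
\]
Given $\varepsilon > 0$, first choose $n$ so that $2M\|T - T_n\| < \varepsilon/2$, then choose $N$ so that the second term is less than $\varepsilon/2$ for $k, l \geq N$. Completeness of $\mathcal{H}$ gives a limit for $(T y_k)$, proving compactness of $T$. The only real subtlety is the diagonal argument, but it is entirely standard; there is no genuine obstacle here, so I would simply refer the reader to any standard functional analysis textbook if space is tight.
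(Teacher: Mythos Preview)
Your proof is correct and complete. The sequential characterization of compactness is valid in Hilbert spaces (in fact in any metric space), the subspace argument via successive extraction is fine, and the diagonal-extraction-plus-$\varepsilon/2$ estimate for closedness is the standard argument and is carried out without error.

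By comparison, the paper does not actually prove this proposition: it simply cites a standard functional analysis reference (\cite[proposition 1.4]{functional2}) and moves on. So you have supplied a genuine self-contained argument where the paper gives none. What you gain is independence from the external reference; what the paper gains is brevity, since this result is entirely classical and not the focus of the article. Your closing remark that one could just refer to a textbook ``if space is tight'' is, in fact, exactly what the paper does.
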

\begin{proof}
See for  example \cite[proposition 1.4]{functional2}.
\end{proof}

Let $T\in L(\h)$. For any $n\in \N$, we set
{{}
\begin{equation}\label{defvaleursinguliere}
\si_n(T)=\inf\Bigl\{ \bigl\|T-R\bigr\|:\, R\in L(\h), \text{rg}(R)\leq n\Bigr\},
\end{equation}
}
According to \cite[p. 232]{functional2}, $T$ is  compact if and only if the sequence $\bigl( \si_n(T)\bigr)_{n\in \N}$ converges
to $0$. We assume that $T$ is compact,
and  $\si_n(T)$ is called the n-th singular value of  $T$.

Let $P:=\bigl(T^\ast T\bigr)^{\frac{1}{2}}$ (where
$T^\ast$ denote adjoint operator of $T$), one shows that
$P$ is an positive selfadjoint compact operator; we
denote by $\bigl( \mu_n(T)\bigr)_{n\in \N}$ the set of  of nonzero  eigenvalues  $P$, in decreasing order and
counted with multiplicity (that is, each nonzero
eigenvalue  $\la$ appears $d_\la$ times, where $d_\la$
is the  dimension of $\ker(\la I-P)$).

We know that
\[
 \mu_n(T)=\si_n(T) \quad \forall\, n\in \N,
\]
see for  example \cite[p. 246]{functional2}.

\begin{definition}\label{definitionTrace}
\rm{Let $T$ be a compact operator. Let $\bigl(\mu_n(T)\bigr)_{n\in \N}$ the set of
singular values of $T$, in decreasing order. We
set
{{}
\[
 \|T\|_1:=\sum_{n\in \N} \mu_n(T).
\]
}
If $\|T\|_1<\infty$ then $T$ is called trace-class (or
nuclear) operator, and  $\|T\|_1$ denote its nuclear
norm.
}
\end{definition}

We denote by $\mathcal{C}_1(\h)$ the set of class trace
operators. We have:
\begin{proposition}
$\mathcal{C}_1(\h)$ is vectorial space, and $\|\cdot\|_1$ is a  norm on $\mathcal{C}_1(\h)$, called
the trace norm.
\end{proposition}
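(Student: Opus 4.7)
The plan is to verify the three axioms of a norm for $\vc_1$, and to observe that closure of $\mathcal{C}_1(\h)$ under addition is in fact subsumed by the triangle inequality. Two of the three axioms are essentially formal consequences of the definition of singular values, so the real content is the triangle inequality.

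First I would dispose of homogeneity and positive definiteness. For any scalar $\lambda\in\C$ and any compact $T$, one has $(\lambda T)^\ast(\lambda T)=|\lambda|^2 T^\ast T$, so $\bigl((\lambda T)^\ast(\lambda T)\bigr)^{1/2}=|\lambda|\,(T^\ast T)^{1/2}$; thus $\mu_n(\lambda T)=|\lambda|\mu_n(T)$ for every $n$, giving $\|\lambda T\|_1=|\lambda|\|T\|_1$. In particular $\lambda T\in\mathcal{C}_1(\h)$ whenever $T\in\mathcal{C}_1(\h)$. For positive definiteness, if $\|T\|_1=0$ then every $\mu_n(T)=0$, so $(T^\ast T)^{1/2}=0$, hence $T^\ast T=0$ and then $\|Tx\|^2=\langle T^\ast Tx,x\rangle=0$ for all $x\in\h$, i.e. $T=0$.

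The main step, and the main obstacle, is the triangle inequality $\|S+T\|_1\leq\|S\|_1+\|T\|_1$ for $S,T\in\mathcal{C}_1(\h)$, because it is not true that $\sigma_n(S+T)\leq \sigma_n(S)+\sigma_n(T)$ term by term. The plan is to prove instead the Ky Fan inequality at the level of partial sums:
\[
\sum_{k=0}^{N}\sigma_k(S+T)\;\leq\;\sum_{k=0}^{N}\sigma_k(S)+\sum_{k=0}^{N}\sigma_k(T)\qquad\forall\,N\in\N,
\]
and then let $N\to\infty$. To establish this, I would first derive the variational formula
\[
\sum_{k=0}^{N}\sigma_k(T)\;=\;\sup\Bigl\{\,\Bigl|\sum_{k=0}^{N}\langle Te_k,f_k\rangle\Bigr|\;:\;(e_k)_{k=0}^{N},(f_k)_{k=0}^{N}\text{ orthonormal families in }\h\,\Bigr\},
\]
which follows from the definition $\sigma_n(T)=\inf\{\|T-R\|:\mathrm{rk}(R)\leq n\}$ combined with a standard min-max argument using the spectral decomposition of $(T^\ast T)^{1/2}$ (for instance via polar decomposition $T=U|T|$ with $U$ a partial isometry). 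Given this formula, the triangle inequality for partial sums is immediate by linearity of the inner product and the supremum-splitting bound. Letting $N\to\infty$ then yields both $S+T\in\mathcal{C}_1(\h)$ and $\|S+T\|_1\leq\|S\|_1+\|T\|_1$, which at the same time proves that $\mathcal{C}_1(\h)$ is a vector subspace of $L(\h)$.

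The main obstacle is precisely the proof of the variational formula above; everything else is mechanical. If one preferred to avoid that identity, an alternative route is to use polar decompositions $S=U_S|S|$, $T=U_T|T|$, $S+T=U|S+T|$ and reduce to showing that for any orthonormal basis $(\xi_k)$ of $\h$ one has $\sum_k\langle|S+T|\xi_k,\xi_k\rangle\leq\sum_k\langle|S|\xi_k',\xi_k'\rangle+\sum_k\langle|T|\xi_k'',\xi_k''\rangle$ for suitable orthonormal bases, after which one applies the identity $\mathrm{Tr}(|A|)=\sum_n\mu_n(A)$ (Lidskii-type, used already in the paper, cf.\ the reference to \cite{functional2}); this is essentially the same argument in different clothing. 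Either way, the work is concentrated in the one estimate on partial sums of singular values, and the proposition follows.
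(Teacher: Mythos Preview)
Your argument is correct and follows the standard route to this result: the Ky Fan partial-sum inequality, obtained from the variational characterisation of $\sum_{k=0}^N\sigma_k(T)$ over pairs of orthonormal families, is precisely the right tool for the triangle inequality, and your treatment of homogeneity and definiteness is clean.

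However, there is nothing to compare it to: the paper does not give a proof at all, but simply cites \cite[15.11 problem 7, c]{Dieudonn\'e2}. So your proposal is not ``the same approach'' as the paper --- you have supplied an actual argument where the paper defers to a reference. What you have written is essentially the proof one would find behind that citation, so in that sense your work is consistent with the paper's intent, just more explicit.
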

\begin{proof}
See \cite[15.11 problem 7, c]{Dieudonné2}.
\end{proof}

\begin{proposition}
Let $ T$ be a trace-class operator. Let
$\bigl(\xi_n\bigr)_{n\in \N}$ an orthonormal basis for
 $\h$. Then  {{} $\sum_{n\in \N}<T\xi_n,\xi_n>$} converges,  with sum equal to {{} $\|T\|_1$}.
\end{proposition}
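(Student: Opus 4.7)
The plan is to reduce to the spectral decomposition of the compact positive selfadjoint operator $P:=(T^{*}T)^{1/2}$. Since the paper has already identified the singular values $\mu_n(T)$ with the nonzero eigenvalues of $P$ (each counted with multiplicity), the spectral theorem for compact selfadjoint operators yields an orthonormal basis $(e_k)_{k\in\N}$ of $\h$ whose elements are eigenvectors of $P$, with $P e_k=\mu_k(T)\,e_k$. The statement is used in the paper for positive trace-class operators (heat operators composed with orthogonal projections, which are positive by construction in \eqref{thetafinite}), so I will carry out the argument under the natural hypothesis $T=P$; the non-positive case would require an extra polar-decomposition step to relate the general trace to singular values.

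First I would fix an arbitrary orthonormal basis $(\xi_n)_{n\in \N}$ of $\h$ and expand each $\xi_n$ in the eigenbasis: $\xi_n=\sum_k \alpha_{n,k}\,e_k$ with $\alpha_{n,k}=\langle \xi_n,e_k\rangle$. Applying $T$ termwise (valid because $T$ is continuous and the partial sums converge in $\h$), I get $T\xi_n=\sum_k \alpha_{n,k}\,\mu_k(T)\,e_k$. Taking the inner product with $\xi_n$ and using Parseval in the basis $(e_k)$ gives
\[
\langle T\xi_n,\xi_n\rangle=\sum_{k\in\N}\mu_k(T)\,|\alpha_{n,k}|^{2}.
\]
Each term is non-negative, which is the key feature that makes the rest of the argument work.

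Next I would sum over $n$ and invoke Tonelli's theorem on the positive double series $\sum_{n,k}\mu_k(T)\,|\alpha_{n,k}|^{2}$ in order to exchange the order of summation:
\[
\sum_{n\in\N}\langle T\xi_n,\xi_n\rangle
=\sum_{k\in\N}\mu_k(T)\sum_{n\in\N}|\alpha_{n,k}|^{2}.
\]
By Parseval applied this time to $e_k$ in the orthonormal basis $(\xi_n)$, the inner sum equals $\|e_k\|^{2}=1$. Hence
\[
\sum_{n\in\N}\langle T\xi_n,\xi_n\rangle=\sum_{k\in\N}\mu_k(T)=\|T\|_{1},
\]
which is finite by the trace-class hypothesis; a posteriori this yields absolute convergence of the series on the left. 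The main technical point to verify carefully is Tonelli's theorem and the bound that legitimizes the exchange; once positivity of every summand is in place this is routine, so the only genuine obstacle is keeping track that the expansion $T\xi_n=\sum_k \alpha_{n,k}\mu_k(T)e_k$ actually converges in norm (which follows from $\mu_k(T)\to 0$ and the square-summability of $(\alpha_{n,k})_k$), and that the identification $\|T\|_1=\sum_k\mu_k(T)$ is exactly the definition \eqref{definitionTrace} recalled just above.
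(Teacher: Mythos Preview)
The paper does not actually prove this proposition; it merely cites Dieudonn\'e \cite[15.11, problem 7 b)]{Dieudonn�2}. Your argument is therefore not a comparison against a given proof but a self-contained verification, and for positive $T$ it is correct: the spectral theorem for the compact selfadjoint operator $P$, the Parseval expansion, and Tonelli's theorem combine exactly as you describe to give $\sum_n\langle T\xi_n,\xi_n\rangle=\sum_k\mu_k(T)=\|T\|_1$.

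You are also right to flag that the proposition as stated is only correct for positive $T$: for a general trace-class operator one has convergence and basis-independence of $\sum_n\langle T\xi_n,\xi_n\rangle$ (this is the trace), but only the inequality $|\mathrm{Tr}(T)|\le\|T\|_1$ of \eqref{tracenorme1}, not equality. One small clarification on your closing remark: in the non-positive case a polar-decomposition step would establish convergence and basis-independence, but it will \emph{not} salvage the equality $\mathrm{Tr}(T)=\|T\|_1$, which is simply false in general (take $T=-I$ in finite dimension). Since, as you note, the paper only invokes this result for the positive operators $P^u e^{-t\Delta_{X,u}}$ in \eqref{thetafinite}, your restriction to $T=P$ is exactly what is needed.
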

\begin{proof}
See \cite[15.11 problem 7, b)]{Dieudonné2}.
\end{proof}
\begin{proposition}\label{lidskii}
Let $T$ be a trace-class operator, and be $(\la_n)_{n\in \N}$ the sequence of its eigenvalues counted with their
multiplicity.
Then, $\sum_{n\in \N}\la_n$ converges absolutely and we have
\[
\sum_{n\in \N}\la_n=\mathrm{Tr}(T).
\]
\end{proposition}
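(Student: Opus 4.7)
The plan is to prove the identity by triangularizing $T$ in a suitably chosen orthonormal basis and then reading off the trace from the diagonal. The key auxiliary result I will need is Weyl's inequality comparing absolute values of eigenvalues with singular values, which secures absolute convergence of $\sum_n\la_n$.

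First, I would recall that a compact operator on a separable Hilbert space has only countably many nonzero eigenvalues, each of finite algebraic multiplicity and accumulating only at $0$. Listing them $(\la_n)_{n\in\N}$ in nonincreasing order of modulus, each repeated according to algebraic multiplicity, and setting $V_k=\bigcup_{m\geq 1}\ker(T-\la_k I)^m$ to be the Riesz generalized eigenspace at $\la_k$, the spaces $W_k=V_1\oplus\cdots\oplus V_k$ are finite-dimensional and $T$-invariant. Applying Gram--Schmidt within each $W_k\ominus W_{k-1}$, and then completing to an orthonormal basis on the orthogonal complement of $\bigcup_k W_k$, produces an orthonormal basis $(e_n)_{n\in\N}$ of $\h$ in which $T$ is upper triangular with diagonal $(\la_n)$ on the part indexed by $\bigcup_k W_k$ and with zero diagonal entries on the remaining block, where the restriction of $T$ is quasi-nilpotent.

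Combining this triangular form with the basis-independent identity $\|T\|_1=\sum_{n\in\N}\langle Te_n,e_n\rangle$ established in the preceding proposition, one reads $\mathrm{Tr}(T)=\sum_n\la_n$: the strictly upper triangular part contributes nothing to the diagonal, and one still has to argue that the quasi-nilpotent block has zero trace. The latter follows from the same triangularization applied to that block, using that a quasi-nilpotent operator has all eigenvalues equal to $0$, together with the continuity of the trace norm in the Schatten class.

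The serious obstacle is the absolute convergence of $\sum_n\la_n$, without which the identity is not even well defined. Here I would appeal to Weyl's inequality $\sum_{n=1}^{N}|\la_n|\leq \sum_{n=1}^{N}\mu_n(T)$ for every $N\in\N$, which combined with the hypothesis $\|T\|_1=\sum_n\mu_n(T)<\infty$ yields $\sum_n|\la_n|<\infty$. Weyl's inequality itself is deduced from its multiplicative counterpart $\prod_{n=1}^{N}|\la_n|\leq \prod_{n=1}^{N}\mu_n(T)$, which in turn is obtained from the polar decomposition $T=U|T|$ and the minimax characterization of singular values, together with the Horn--Weyl combinatorial lemma that converts the product inequality into the sum inequality. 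This comparison between spectrum and singular values is the analytic core of Lidskii's theorem and is the step that I would handle with the most care.
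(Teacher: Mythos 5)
The paper offers no proof of this proposition at all --- it simply cites Simon's book on trace ideals --- so your outline has to stand on its own. Its skeleton is the classical route to Lidskii's theorem: triangularize along the nest of Riesz generalized eigenspaces, control $\sum_n|\la_n|$ by Weyl's inequality $\sum_{n\leq N}|\la_n|\leq\sum_{n\leq N}\mu_n(T)$, and reduce to a quasi-nilpotent remainder. The Weyl part of your argument (multiplicative inequality via polar decomposition and minimax, then Horn's lemma to pass from products to sums) is sound and does deliver the absolute convergence. One small correction along the way: the basis-independent identity you invoke should read $\sum_n\langle Te_n,e_n\rangle=\mathrm{Tr}(T)$, not $\|T\|_1$; the two coincide only for positive operators (the paper's own preceding proposition is misstated in the same way).

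The genuine gap is in the quasi-nilpotent block, which is precisely where all the difficulty of Lidskii's theorem lives. Writing $\h=M\oplus M^{\perp}$ with $M$ the closed span of the generalized eigenspaces, the compression of $T$ to $M^{\perp}$ is indeed quasi-nilpotent, but quasi-nilpotence does not give you ``zero diagonal entries'' in an orthonormal basis of $M^{\perp}$, and your justification --- ``the same triangularization applied to that block, using that a quasi-nilpotent operator has all eigenvalues equal to $0$'' --- is circular: a quasi-nilpotent compact operator has no nonzero eigenvalues, so the generalized-eigenspace nest collapses and produces no triangular form; for Volterra-type operators the Ringrose nest of invariant subspaces is continuous and no orthonormal basis realizes an upper-triangular matrix with a meaningful diagonal. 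The assertion that a quasi-nilpotent trace-class operator has trace zero \emph{is} Lidskii's theorem in its essential form, and ``continuity of the trace norm'' does not supply it. The standard way to close this is analytic: for quasi-nilpotent trace-class $V$ the Fredholm determinant $\det(I+zV)$ is an entire, zero-free function equal to $e^{z\,\mathrm{Tr}(V)}$, and the estimate $|\det(I+zV)|\leq\prod_n\bigl(1+|z|\mu_n(V)\bigr)$ (again via the Weyl--Horn inequalities) shows it has minimal exponential type, forcing $\mathrm{Tr}(V)=0$. Without this (or an equivalent) input, your outline proves only the easy half of the theorem.
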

\begin{proof}
See \cite[(3.2)]{Simon}.
\end{proof}

\begin{proposition}\label{normetrace}
 Let $A$ et $B$ be two bounded operators and $T\in \mathcal{C}_1(\h)$, then
\[
 \|ATB\|_1\leq \|A\|\|T\|_1\|B\|.
\]
\end{proposition}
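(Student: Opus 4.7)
The plan is to work directly with the singular value characterization of the trace norm recalled just before the statement, namely $\|T\|_1 = \sum_{n\in \N} \mu_n(T) = \sum_{n\in\N} \sigma_n(T)$, where $\sigma_n(T) = \inf\{\|T-R\| : R\in L(\h), \mathrm{rg}(R)\leq n\}$. The strategy is to establish the pointwise bound
\[
\sigma_n(ATB) \leq \|A\|\,\|B\|\,\sigma_n(T) \qquad \forall\, n\in \N,
\]
and then sum over $n$.

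For the pointwise bound, the key observation is that if $R\in L(\h)$ has rank $\leq n$, then $ARB$ also has rank $\leq n$, because $\mathrm{Im}(ARB) \subseteq A\bigl(\mathrm{Im}(R)\bigr)$, which has dimension at most $\mathrm{rg}(R)\leq n$. Consequently, $ARB$ is a competitor in the infimum defining $\sigma_n(ATB)$, and
\[
\sigma_n(ATB) \leq \|ATB - ARB\| = \|A(T-R)B\| \leq \|A\|\,\|T-R\|\,\|B\|.
\]
Taking the infimum over all $R$ with $\mathrm{rg}(R)\leq n$ on the right-hand side yields the desired inequality $\sigma_n(ATB)\leq \|A\|\,\|B\|\,\sigma_n(T)$.

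Finally, I would sum this inequality over $n\in \N$. Since $T$ is trace-class, $\sum_n \sigma_n(T) = \|T\|_1 < \infty$, so the same holds for $ATB$, which shows in particular that $ATB$ is itself trace-class (compactness being automatic since $\sigma_n(ATB)\to 0$), and we obtain
\[
\|ATB\|_1 = \sum_{n\in\N}\sigma_n(ATB) \leq \|A\|\,\|B\|\sum_{n\in\N}\sigma_n(T) = \|A\|\,\|T\|_1\,\|B\|.
\]
There is no real obstacle: the only subtlety worth flagging is that the argument simultaneously proves $ATB\in \mathcal{C}_1(\h)$ and the norm bound, so one never has to assume trace-class membership of $ATB$ a priori. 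Everything else is a formal manipulation with the definitions recalled in the appendix.
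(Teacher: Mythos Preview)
Your argument is correct. The pointwise inequality $\sigma_n(ATB)\leq \|A\|\,\|B\|\,\sigma_n(T)$ follows exactly as you say from the fact that $ARB$ has rank at most $\mathrm{rg}(R)$, and summing over $n$ gives both $ATB\in\mathcal{C}_1(\h)$ and the norm bound.

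As for comparison with the paper: the paper does not actually supply a proof of this proposition. It is stated in the appendix as a standard fact about trace-class operators, in the same spirit as the other items there (several of which are given with references to \cite{functional}, \cite{functional2}, \cite{Dieudonn�2}, or \cite{Simon} in lieu of proofs). Your proof is precisely the natural one that the surrounding material invites, since it relies only on the definition \eqref{defvaleursinguliere} of $\sigma_n(T)$ and the identification $\|T\|_1=\sum_n\sigma_n(T)$ recalled just before the statement. So there is nothing to compare against; you have filled in a gap the paper leaves to the reader.
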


Let  $\mathcal{H}$ be a separable  Hilbert space. Let
$A$ be a compact operator on $\mathcal{H}$  and $\bigl(e_i\bigr)_i$ an  orthonormal basis of $\h$. If  $\sum_{i\geq 0}\bigl(Ae_i,e_i\bigr)$ is absolutely  convergent with respect to an  orthonormal basis $\bigl(e_i\bigr)_{i}$, and hence for any orthonormal
basis of $\mathcal{H}$, we call this sum the  trace of $A$ and we denote it by  $\mathrm{Tr}(A)$.

If $T$ is class trace operator, then
\[
 \|T\|_1=\mathrm{Tr}\bigl((T^\ast T)^\frac{1}{2}\bigr).
\]

Let $A$ be a class trace operator, we have:
\begin{itemize}
\item[$\bullet$] \begin{equation}\label{commutetrace11}
 \mathrm{Tr}(AB)=\mathrm{Tr}(BA),
\end{equation}

if   $B$ is bounded, cf. \cite[TR. 2 p.463]{functional}.
\item[$\bullet$]

\begin{equation}\label{tracenorme1}
 \bigl|\mathrm{Tr}(A) \bigr|\leq \|A\|_1.
\end{equation}

cf. \cite[TR. 7 p.463]{functional}.
\end{itemize}

\begin{proposition}
Let $\h$ be a Hilbert space. let $\bigl(<\cdot,
\cdot>_u\bigr)_{u\in I}$  a family of hermitian metrics
on  $\h$ uniformly equivalents. Let  $u_0\in I$, and $T\in \mathcal{C}_{1,u_0}(\h)$ then
 \begin{enumerate}
 \item  $T$ is a trace-class operator on $\h $ endowed
 with the norm $<\cdot,\cdot>_u$, for any $u\in I$,
\item  There exists $c_8$ et $c_9$ positives constants
such that:
\[
 c_8\|T\|_{1,u'}\leq \|T\|_{1,u}\leq c_9\|T\|_{1,u'} \quad \forall\, u,u'\in I.
\]
\end{enumerate}

\end{proposition}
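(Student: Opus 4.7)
The plan is to reduce everything to the equivalence of singular values, exploiting the fact that the sequence $\bigl(\sigma_n(T)\bigr)_{n\in\N}$ defining the trace norm depends on the inner product only through the operator norm and not through the notion of finite rank.

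First, I would unpack the hypothesis: uniform equivalence of $\bigl(\langle\cdot,\cdot\rangle_u\bigr)_{u\in I}$ gives constants $a,b>0$ (independent of $u\in I$) such that
\[
a\,\|x\|_{u_0}\leq \|x\|_u\leq b\,\|x\|_{u_0}\quad \forall\,x\in\h,\ \forall\,u\in I.
\]
For any continuous linear operator $R\in L(\h)$, denoting by $\vc_{\mathrm{op},u}$ the operator norm induced by $\vc_u$, a one-line estimate yields
\[
\frac{a}{b}\,\|R\|_{\mathrm{op},u_0}\leq \|R\|_{\mathrm{op},u}\leq \frac{b}{a}\,\|R\|_{\mathrm{op},u_0},
\]
uniformly in $u\in I$. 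In particular, compactness is preserved, since compactness is characterized purely by the topology, and here all the norms $\vc_u$ are equivalent to $\vc_{u_0}$.

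Next I would exploit the formula \eqref{defvaleursinguliere}. The crucial observation is that the condition $\mathrm{rg}(R)\leq n$ is a purely algebraic (vector-space) condition, insensitive to the choice of inner product. Therefore the infimum defining $\sigma_n(T)_u$ is taken over the \emph{same} set of finite-rank operators as the one defining $\sigma_n(T)_{u_0}$; only the functional $R\mapsto \|T-R\|_{\mathrm{op},u}$ being minimized changes. Combining this with the two-sided inequality above applied to $T-R$, I obtain
\[
\frac{a}{b}\,\sigma_n(T)_{u_0}\leq \sigma_n(T)_u\leq \frac{b}{a}\,\sigma_n(T)_{u_0}\quad \forall\,n\in\N,\ \forall\,u\in I.
\]

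Finally, I would conclude both assertions simultaneously by summing. Using $\|T\|_{1,u}=\sum_{n\in\N}\sigma_n(T)_u$ (which follows from $\mu_n=\sigma_n$ recalled in the excerpt, together with Definition \eqref{definitionTrace}), the singular-value comparison gives
\[
\frac{a}{b}\,\|T\|_{1,u_0}\leq \|T\|_{1,u}\leq \frac{b}{a}\,\|T\|_{1,u_0},
\]
for every $u\in I$. Applying this relation twice (once for $u$ and once for $u'$, both compared to $u_0$) yields the claimed two-sided estimate with $c_8=(a/b)^2$ and $c_9=(b/a)^2$. Assertion (1) follows from the finiteness of $\|T\|_{1,u}$ for every $u\in I$. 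The main (very mild) point that deserves care is the observation that the class of finite-rank operators of rank at most $n$ is inner-product-independent, so the infimum defining $\sigma_n$ transfers unchanged; everything else is a routine chain of inequalities.
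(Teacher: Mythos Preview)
Your proposal is correct and follows essentially the same approach as the paper: both arguments pass from the uniform equivalence of the vector norms to the uniform equivalence of the operator norms, observe that the finite-rank condition in the definition of $\sigma_n(T)$ is independent of the inner product, deduce a two-sided bound on the singular values, and sum. The only cosmetic difference is that the paper compares any two indices $u,u'$ directly (assuming constants $c_8',c_9'$ valid for all pairs), while you route everything through the fixed $u_0$ and square the constants; these are equivalent.
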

\begin{proof}
Let us recall that  $L(\h)$ is endowed with the following
norm:

\[
 \|A\|=\sup_{x\in \h\setminus \{0\}}\frac{\|Ax\|}{\|x\|} \quad A\in L(\h).
\]
By assumption, there exist $c'_8,c'_9$ two positive
constants such that:
\[
 c'_8\|x\|_{u'}\leq \|x\|_u\leq  c'_9\|x\|_{u'}\quad \forall x\in \h\quad\forall\, u,u'\in I.
\]

Hence,
\[
\frac{c'_8}{c'_9}\frac{\|Tx\|_{u'}}{\|x\|_{u'}}\leq \frac{\|Tx\|_{u}}{\|x\|_{u}}\leq
\frac{c'_9}{c'_8}\frac{\|Tx\|_{u'}}{\|x\|_{u'}}\quad \forall\, x\neq 0.
\]

Therefore,
{{}
\[
 \frac{c'_8}{c'_9}\|T\|_{u'}\leq \|T\|_{u}\leq \frac{c'_9}{c'_8}\|T\|_{u'}.
\]
}
We deduce that $T$ is compact for any $u\in I$. Indeed;
 , If we consider$F$, a bounded closed subset in
$(\mathcal{H},<\cdot,\cdot>_{u})$ for some $u\in I$, then according to the last inequality, $F$ is bounded and
closed  for
$<,>_{u_0}$, and since $T\in \mathcal{C}_{1,u_0}(\mathcal{H})$, then (by definition)
$T$ is compact in $(\mathcal{H},<\cdot,\cdot>_{u_0})$.
It follows that $T(F)$ is  relatively  compact in the
previous space. Using the same inequality, we conclude that $T(F)$ is relatively compact in $(\mathcal{H},<,>_u)$.

Let $R$ be a  operator of finite rank, with rank less that$n$. We have

{{}
\[
 \frac{c'_8}{c'_9}\|T-R\|_{u'}\leq \|T-R\|_{u}\leq \frac{c'_9}{c'_8}\|T-R\|_{u'}.
\]
}
Then,
{{}
\[
 \frac{c'_8}{c'_9}\si_n(T)_{u'}\leq \si_n(T)_{u}\leq \frac{c'_9}{c'_8}\si_n(T)_{u'}.
\]
}
Therefore
\[
 \frac{c'_8}{c'_9}\|T\|_{1,u'}\leq\|T\|_{1,u}\leq \frac{c'_9}{c'_8}\|T\|_{1,u'}.
\]
\end{proof}

\begin{Corollaire}\label{equivtracenorme11}
 Let $(\vc_u)_{u\geq 1}$ a sequence of  hermitian metrics on $\h$, which   converges uniformly to a hermitian
 metric denoted $\vc_\infty$, when  $u$ goes to infinity $\vc_\infty$. We suppose that  $\forall \eps>0$, there
 exists $\eta>0$ such that
{{}
\begin{equation}\label{limitmulti}
(1-\eps)\|\xi\|_{u'}\leq  \|\xi\|_{u}\leq (1+\eps)\|\xi\|_{u'}\quad \forall u,u'>\eta.
\end{equation}
}
Then, we have for any  $0<\eps<1$

{{}
\[
\frac{1-\eps}{1+\eps}\si_n(T)_{u'}\leq  \si_n(T)_{u}\leq \frac{1+\eps}{1-\eps}\si_n(T)_{u'}\quad \forall
T\in L(\h),\,\forall n\in \N,\,\forall u,u'>\eta.
\]
}
In particular,

{{}
\[
\frac{1-\eps}{1+\eps}\|T\|_{1,u'}\leq  \|T\|_{1,u}\leq \frac{1+\eps}{1-\eps}\|T\|_{u'}\quad \forall T\in L(\h),\quad\forall u,u'>\eta.
\]
}

\end{Corollaire}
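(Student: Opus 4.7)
The plan is to deduce the corollary as a direct quantitative specialization of the preceding proposition, the one which shows that for any family of uniformly equivalent hermitian norms $(\vc_u)_{u\in I}$ satisfying $c_8'\|\xi\|_{u'}\leq \|\xi\|_u\leq c_9'\|\xi\|_{u'}$, the operator norm, the singular values, and the trace norm all satisfy the analogous two-sided inequalities with constants $c_8'/c_9'$ and $c_9'/c_8'$. My strategy is simply to apply this scheme with the explicit constants $c_8' = 1-\eps$ and $c_9' = 1+\eps$ furnished by hypothesis \eqref{limitmulti} once $u,u' > \eta$.

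First I fix $0<\eps<1$ and choose $\eta>0$ as in \eqref{limitmulti}. Restricting the index set to $I:=(\eta,\infty)$, the family $(\vc_u)_{u\in I}$ is uniformly equivalent on $\h$ with the stated constants. For any $T\in L(\h)$ and any $x\in \h\setminus\{0\}$, the hypothesis gives
\[
\frac{1-\eps}{1+\eps}\,\frac{\|Tx\|_{u'}}{\|x\|_{u'}}\leq \frac{\|Tx\|_u}{\|x\|_u}\leq \frac{1+\eps}{1-\eps}\,\frac{\|Tx\|_{u'}}{\|x\|_{u'}},
\]
and taking the supremum over $x\neq 0$ yields the operator-norm step
\[
\frac{1-\eps}{1+\eps}\|T\|_{u'}\leq \|T\|_u\leq \frac{1+\eps}{1-\eps}\|T\|_{u'},\qquad \forall\, u,u'>\eta.
\]

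Next I pass to the singular values via the variational definition \eqref{defvaleursinguliere}. For any finite-rank operator $R$ with $\mathrm{rg}(R)\leq n$, applying the operator-norm inequality above to $T-R$ gives
\[
\frac{1-\eps}{1+\eps}\|T-R\|_{u'}\leq \|T-R\|_u\leq \frac{1+\eps}{1-\eps}\|T-R\|_{u'},
\]
and taking the infimum over all such $R$ preserves both factors, producing the claimed double inequality for $\si_n(T)_u$. Finally, summing over $n\in \N$ and invoking the identity $\|T\|_{1,u}=\sum_{n\in \N}\si_n(T)_u$ (Definition \ref{definitionTrace}) yields the trace-norm inequality.

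I do not anticipate any real obstacle: every step is a direct specialization of the preceding proposition, the only care required being the bookkeeping of the ratio $(1+\eps)/(1-\eps)$ as it propagates along the chain operator norm $\to$ singular values $\to$ trace norm. Note that for $T\notin \mathcal{C}_{1,u_0}(\h)$ the trace-norm inequality remains formally valid since both sides are then infinite, so no separate trace-class hypothesis needs to be imposed in the statement.
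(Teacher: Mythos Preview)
Your proof is correct and follows exactly the approach the paper intends: the paper's own proof consists solely of the sentence ``This is a consequence of the proof of the previous proposition,'' and you have simply spelled out that consequence by plugging $c_8'=1-\eps$, $c_9'=1+\eps$ into the chain operator norm $\to$ singular values $\to$ trace norm established there.
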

\begin{proof}
This is a consequence of the proof of the previous proposition.
\end{proof}

We consider the prehilbertian space $A^{0,0}(X)$, endowed with  the metric $\vc_{L^2,u}$ associated to the
metric  $h_{X,u}$. We
know that $(h_{X,u})_u$ converges uniformly to a  limit $h_{X,\infty}$. Hence  $\bigl(\vc_{L^2,u}\bigr)$ converges
uniformly to $\vc_{L^2,\infty}$, {{} (More precisely, it verifies  \eqref{limitmulti})}.

Let  $T=B e^{-t\Delta_{X,u}}$, where $B$ is a bounded operator and $t>0$ is  fixed. We have,  for any $u\gg 1$
{{}
\[
 \frac{1-\eps}{1+\eps}\si_n\bigl(Be^{-t\Delta_{X,u}}\bigr)_\infty\leq \si_n\bigl(Be^{-t\Delta_{X,u}}\bigr)_u\leq \frac{1+\eps}{1-\eps} \si_n\bigl(Be^{-t\Delta_{X,u}}\bigr)_\infty.
\]
}
Therefore,
{{}
\begin{equation}\label{thetainfty}
 \frac{1-\eps}{1+\eps}\bigl\|Be^{-t\Delta_{X,u}}\bigr\|_{1,\infty}\leq \bigl\|Be^{-t\Delta_{X,u}}\bigr\|_{1,u}\leq \frac{1+\eps}{1-\eps} \bigl\|Be^{-t\Delta_{X,u}}\bigr\|_{1,\infty}.
\end{equation}
}


We consider metric $L^2_\infty$ on $\h$. We have, for any $t>0$ fixed

\begin{equation}\label{thetainfty11}
 \begin{split}
  \bigl|\bigl\|Be^{-t\Delta_{X,u}}e^{-t\Delta_{X,\infty}} \bigr\|_{1,\infty}- \bigl\| Be^{-2t\Delta_{X,\infty}}\bigr\|_{1,\infty}\bigr|&\leq  \bigl\|Be^{-t\Delta_{X,u}}e^{-t\Delta_{X,\infty}}-  Be^{-2t\Delta_{X,\infty}}\bigr\|_{1,\infty}\\
&=\bigl\|B(e^{-t\Delta_{X,u}}-  e^{-t\Delta_{X,\infty}})e^{-t\Delta_{X,\infty}}\bigr\|_{1,\infty}\\
&\leq \bigl\|e^{-t\Delta_{X,u}}-  e^{-t\Delta_{X,\infty}}\bigr\|_{L^2,\infty}\bigl\|Be^{-t\Delta_{X,\infty}}\bigr\|_{1,\infty}
 \end{split}
\end{equation}

and
\begin{equation}\label{thetainfty12}
\begin{split}
 \Bigl|\bigl\|Be^{-t\Delta_{X,u}}e^{-t\Delta_{X,\infty}} \bigr\|_{1,\infty}- \bigl\| Be^{-2t\Delta_{X,u}}\bigr\|_{1,\infty}\Bigr|&\leq \bigl\|Be^{-t\Delta_{X,u}}e^{-t\Delta_{X,\infty}}- B e^{-2t\Delta_{X,u}}\bigr\|_{1,\infty}\\
&\leq \bigl\| Be^{-t\Delta_{X,u}}(e^{-t\Delta_{X,u}}-e^{-t\Delta_{X,\infty}})\bigr\|_{1,\infty}\\
&\leq \bigl\|e^{-t\Delta_{X,u}}-e^{-t\Delta_{X,\infty}}  \bigr\|_{L^2,\infty}\bigl\|Be^{-t\Delta_{X,u}}\|_{1,\infty}.
\end{split}
\end{equation}

\subsection{The heat kernel of Laplacians}
We follow \cite[Appendice, D]{Ma}, for the  definition of heat kernel  associated to a Laplacian. Let $\Delta$ be a
Laplacian, roughly speaking, one  defines heat kernel operator using the theory of operators, and it is denoted by
$e^{-t\Delta}$ . For any $t>0$, $e^{-t\Delta}$ is a compact operator on $L^2(X)$ to $L^2(X)$ which is $\mathcal{C}^1$ in $t$
and which verifies the following properties : for $s\in L^2(X)$
\[
\begin{split}
\Bigl( \frac{\pt}{\pt t}+\Delta \Bigr)e^{-t\Delta}s&=0,\\
\underset{t\mapsto 0}{\lim}\, e^{-t\Delta}s&=s\quad \text{dans}\quad L^2(X).
\end{split}
\]
One  shows that $e^{-t\Delta}$ is unique.

\begin{theorem}
Let  $\Delta^u$ be a  smooth family of  Laplacians, then for each $t>0$, the corresponding family
of heat kernels  $e^{-t\Delta_u}$ defines a smooth
family of operators on  $\mathcal{O}$. Furthermore,the derivative
of  $e^{-t\Delta_u}$ with respect to $u$ is given by
Duhamel formula:
{{}
\begin{equation}\label{heatkernel}
\frac{\partial }{\partial u}e^{-t\Delta^u}=-\int_{0}^t e^{-(t-s)\Delta^u}(\partial_u \Delta^u)e^{-s\Delta^u}ds.
\end{equation}
}
\end{theorem}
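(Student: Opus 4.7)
The plan is to prove the Duhamel formula first, from which smoothness in $u$ will follow, and then iterate. The central device is the interpolation trick: for fixed $t>0$ and a base parameter $u_0$, define for parameters $u$ near $u_0$ and $s\in[0,t]$ the operator-valued function
\[
G(s) := e^{-(t-s)\Delta^{u_0}}\,e^{-s\Delta^{u}}.
\]
Observe that $G(0)=e^{-t\Delta^{u_0}}$ and $G(t)=e^{-t\Delta^{u}}$, so one has the telescoping identity $e^{-t\Delta^{u}}-e^{-t\Delta^{u_0}}=\int_0^t G'(s)\,ds$, provided $G$ is differentiable in $s$ in an appropriate operator-norm sense.

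First I would compute $G'(s)$ formally. Using the heat equation $\partial_s e^{-s\Delta^{u}}=-\Delta^{u}e^{-s\Delta^{u}}$ and the analogous identity for $\Delta^{u_0}$ (with opposite sign because $s$ appears as $t-s$), one gets
\[
G'(s)=e^{-(t-s)\Delta^{u_0}}\bigl(\Delta^{u_0}-\Delta^{u}\bigr)e^{-s\Delta^{u}}.
\]
Integrating and substituting yields
\[
e^{-t\Delta^{u}}-e^{-t\Delta^{u_0}}=-\int_0^t e^{-(t-s)\Delta^{u_0}}\bigl(\Delta^{u}-\Delta^{u_0}\bigr)e^{-s\Delta^{u}}\,ds.
\]
Dividing by $u-u_0$ and passing to the limit $u\to u_0$ should produce the Duhamel formula. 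To justify this passage I would use the uniform estimate $\|e^{-s\Delta^{u}}\|\le 1$ (valid since $\Delta^{u}$ is positive selfadjoint, cf.\ \eqref{normeleq1}), together with continuity of $u\mapsto \Delta^{u}$ on a common core (say $A^{(0,0)}(X)$) and the fact that $\frac{\Delta^{u}-\Delta^{u_0}}{u-u_0}\to\partial_u\Delta^{u}|_{u_0}$ strongly there. Dominated convergence applied to the integrand then delivers
\[
\frac{\partial}{\partial u}e^{-t\Delta^{u}}\Big|_{u_0}=-\int_0^t e^{-(t-s)\Delta^{u_0}}\,\bigl(\partial_u\Delta^{u}|_{u_0}\bigr)\,e^{-s\Delta^{u_0}}\,ds.
\]

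For the smoothness claim, once the first derivative is identified as an integral whose integrand is a composition of heat operators (each of which is itself differentiable in $u$ by the same argument) and the multiplication operator $\partial_u\Delta^{u}$ (smooth in $u$ by hypothesis, since the metric $h_{X,u}$ is smooth in $u$), I would iterate: each successive derivative in $u$ produces an integral of the same shape but with more $\partial_u\Delta$ factors interleaved, and the uniform bound $\|e^{-s\Delta^{u}}\|\le 1$ plus smoothness of $u\mapsto \partial_u\Delta^{u}$ on $A^{(0,0)}(X)$ controls every such term.

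The main obstacle will be the unboundedness of $\partial_u\Delta^{u}$: it is not a bounded operator on $\h_0(X)$, only densely defined. Consequently, the integrand $e^{-(t-s)\Delta^{u_0}}(\partial_u\Delta^{u_0})e^{-s\Delta^{u}}$ must be analyzed carefully near the endpoints $s=0$ and $s=t$, where one factor is close to the identity. The rescue is that $\partial_u\Delta^{u}=(\partial_u\log h_{X,u}^{-1})\Delta^{u}$ (as used in the proof of \eqref{bornelapbelt}), so the unboundedness is absorbed into a Laplacian factor which combines with an adjacent heat operator to produce $\partial_s e^{-s\Delta^{u}}$. This allows the integration-by-parts trick already carried out in the proof of \eqref{key}, splitting $[0,t]$ into $[0,t/2]$ and $[t/2,t]$, to yield genuinely convergent integrals; I would model the rigorous justification on that argument.
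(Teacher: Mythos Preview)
The paper does not prove this theorem; it simply cites \cite[theorem D.1.6]{Ma} and \cite[theorem 2.48]{heat} as references for a standard result. Your interpolation argument via $G(s)=e^{-(t-s)\Delta^{u_0}}e^{-s\Delta^{u}}$ is exactly the classical proof of Duhamel's formula found in those sources, and your identification of the main technical issue---the unboundedness of $\partial_u\Delta^u$ near the endpoints of the $s$-integral, handled by writing $\partial_u\Delta^u=(\partial_u\log h_{X,u}^{-1})\Delta^u$ and absorbing the Laplacian into $\partial_s e^{-s\Delta}$---is correct and is precisely the device the paper itself uses later in deriving \eqref{key}.

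One small gap worth flagging: after dividing the integral identity by $u-u_0$, the integrand contains the factor $e^{-s\Delta^{u}}$, not $e^{-s\Delta^{u_0}}$, so to pass to the limit you need $e^{-s\Delta^{u}}\to e^{-s\Delta^{u_0}}$ strongly as $u\to u_0$. This is itself a continuity statement in $u$ that must be established first. The clean route is to use the undivided identity $e^{-t\Delta^{u}}-e^{-t\Delta^{u_0}}=-\int_0^t e^{-(t-s)\Delta^{u_0}}(\Delta^{u}-\Delta^{u_0})e^{-s\Delta^{u}}\,ds$ together with the endpoint trick to first prove norm-continuity of $u\mapsto e^{-t\Delta^u}$, and only then bootstrap to differentiability.
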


\begin{proof}
See \cite[theorem D.1.6]{Ma} ou \cite[theorem 2.48]{heat}.
\end{proof}

\begin{theorem}\label{semi}
Let $V$ be a  Banach space. If $A$ is   selfadjoint and
positive operator, then  $-A$ generates a  semi-group
$P(t)=e^{-tA}$ consisting of positive and selfadjoint operators
with norms $\leq 1$.
\end{theorem}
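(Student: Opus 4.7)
The plan is to invoke the spectral theorem for unbounded selfadjoint operators, reading $V$ as a Hilbert space since selfadjointness is only meaningful in that setting. Because $A$ is positive and selfadjoint, it admits a spectral resolution $A=\int_{[0,\infty)}\lambda\,dE_\lambda$, whose projection-valued measure $\{E_\lambda\}$ is supported on $[0,\infty)$ by positivity of the spectrum. For each $t\geq 0$ I would then define $P(t):=e^{-tA}$ by the Borel functional calculus, namely $P(t)=\int_{[0,\infty)}e^{-t\lambda}\,dE_\lambda$.

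With this definition, every assertion of the theorem reduces to a pointwise statement about the real function $\lambda\mapsto e^{-t\lambda}$ on $[0,\infty)$: the contraction bound $\|P(t)\|\leq 1$ follows from $|e^{-t\lambda}|\leq 1$; selfadjointness of $P(t)$ follows from the fact that $e^{-t\lambda}$ is real; positivity of $P(t)$ follows from $e^{-t\lambda}\geq 0$; and the semigroup law $P(t+s)=P(t)P(s)$ is the multiplicativity of the functional calculus applied to $e^{-(t+s)\lambda}=e^{-t\lambda}e^{-s\lambda}$.

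The remaining ingredients are strong continuity at $t=0$ and the identification of $-A$ as the infinitesimal generator; both are applications of the dominated convergence theorem against the spectral measures $d\|E_\lambda x\|^2$. For continuity one writes $\|(P(t)-I)x\|^2=\int_{[0,\infty)}|e^{-t\lambda}-1|^2\,d\|E_\lambda x\|^2$ and uses the uniform bound $|e^{-t\lambda}-1|\leq 2$ together with pointwise convergence to zero. For the generator, one writes, for $x\in\mathrm{Dom}(A)$, the identity $\|t^{-1}(P(t)-I)x+Ax\|^2=\int_{[0,\infty)}|t^{-1}(e^{-t\lambda}-1)+\lambda|^2\,d\|E_\lambda x\|^2$ and applies the bound $|t^{-1}(e^{-t\lambda}-1)+\lambda|\leq 2\lambda$, which is square integrable against $d\|E_\lambda x\|^2$ precisely because $x\in\mathrm{Dom}(A)$.

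There is no real obstacle: once the spectral resolution of the positive selfadjoint operator $A$ is granted, the theorem reduces to elementary pointwise properties of $e^{-t\lambda}$ on the positive real axis combined with dominated convergence. The only point worth flagging is that the hypothesis should read \emph{Hilbert space} rather than Banach space, since selfadjointness presupposes an inner product structure; alternatively the statement can be deduced from the Hille--Yosida generation theorem, using the resolvent estimate $\|(\mu I+A)^{-1}\|\leq \mu^{-1}$ for $\mu>0$, which is immediate from positivity and selfadjointness, to obtain contractivity of the semigroup.
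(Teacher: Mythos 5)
Your proof is correct and is essentially the standard argument: the paper itself gives no proof, delegating entirely to the cited reference (Taylor, Proposition 9.4), which establishes the result by exactly the spectral-theorem route you take. You are also right to flag that the hypothesis must read \emph{Hilbert} rather than Banach space, since selfadjointness and positivity presuppose an inner product; this is a slip in the paper's statement, and in the paper's actual application the operator $\Delta_{X,\infty}$ acts on the Hilbert space $\h_0(X)$, so no harm results.
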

\begin{proof}
See \cite[Proposition 9.4]{Taylor}.
\end{proof}

\section{Appendix 2}\label{Quelqueslemmes}

In this section, we present some results and technical lemmas needed through this article.

\subsection{A technical result}
In the  following proposition, given a discrete sequence  $(h_n)_{n\in \N^\ast}$, of hermitian metrics on a
vector bundle on a Riemannian manifold, we construct a family  $(h_u)_{u\in \N^\ast}$  with continuous parameter
which
behaves smoothly and preserves some properties of the previous sequence. This construction, will be useful to
study the
infinitesimal variations of different objects attached to this sequence.

\begin{proposition}\label{suitefamille}
Let $X$ be a complex manifold. Let $E$ be a holomorphic vector bundle, for  example $TX$. We denote by $\mathcal{M}et(E)$
the space of continuous (integrable, smooth,...)  hermitian metrics on $E$. Let   $(h_n)_{n\in \N}$
be a sequence  hermitian metrics
on $E$ (non necessarily smooth), then there exists a continuous  family $(H(u))_{u\geq 1}$  such that:
\begin{enumerate}
\item  $H(u)$ is a hermitian  metric on  $E$, $\forall u$.
\item For any holomorphic local section $s$ of $E$, the following application
\[
 \begin{split}
 H: [1,\infty[&\lra \R^+\\
u&\longmapsto H(u)(s,s),
 \end{split}
\]
is smooth.
\item $H(n)=h_n$, $\forall\, n\in \N$.
 \item If we  suppose that $(h_n)_{n\in \N}$ converges uniformly to a metric $h_\infty$, then  the family
 $(\frac{H(u)}{h_\infty})_{u\geq 1}$ of smooth functions on $X$, converges uniformly to the constant function $1$ on $X$.
\item If $E$ is a line bundle,  then
{{}
\[\frac{\pt}{\pt u}  \log H(u)=\mathrm{O}\biggl(\frac{h_{[u]+1}-h_{[u]}}{h_{[u]}}  \biggr),\]}
on  $X$, with $[u]$ is the round up of  $u$.
\end{enumerate}

\end{proposition}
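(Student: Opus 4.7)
The proof is essentially a direct construction by convex interpolation with a smooth cutoff. The plan is to fix once and for all a smooth function $\rho\colon\R\to[0,1]$ with $\rho(t)=0$ for $t\le 0$, $\rho(t)=1$ for $t\ge 1$, and $\rho^{(k)}(0)=\rho^{(k)}(1)=0$ for every $k\ge 1$ (a standard bump-function construction: integrate a compactly supported smooth density on $(0,1)$ and rescale). Then, for $u\in[n,n+1]$ with $n\in\N^\ast$, I would set
\[
H(u) := \bigl(1-\rho(u-n)\bigr)\,h_n \;+\; \rho(u-n)\,h_{n+1}.
\]
Since each $h_n$ is a hermitian metric and the coefficients are nonnegative with sum $1$, $H(u)$ is again a hermitian metric on $E$, proving (1). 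Properties (2) and (3) are then immediate: on each interval $[n,n+1]$ the map $u\mapsto H(u)(s,s)$ is smooth as $\rho$ is, and the conditions $\rho^{(k)}(0)=\rho^{(k)}(1)=0$ for all $k\ge 1$ ensure that the smoothness glues across the integer points. At the endpoints $H(n)=h_n$ since $\rho(0)=0$ and $\rho(1)=1$.

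For (4), assume $(h_n)_n$ converges uniformly to $h_\infty$ on $X$ (in the sense appropriate to metrics). Since $H(u)$ is a convex combination of $h_{[u]}$ and $h_{[u]+1}$, the ratio $H(u)/h_\infty$ lies between $h_{[u]}/h_\infty$ and $h_{[u]+1}/h_\infty$ (in the line-bundle case literally, and coefficient-wise in general), and both ratios converge uniformly to $1$ on $X$ by the hypothesis; the uniform convergence $H(u)/h_\infty\to 1$ follows.

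For (5), when $E$ is a line bundle, direct differentiation gives, for $u\in[n,n+1]$,
\[
\frac{\pt}{\pt u}\log H(u) \;=\; \rho'(u-n)\,\frac{h_{n+1}-h_n}{(1-\rho(u-n))\,h_n+\rho(u-n)\,h_{n+1}}.
\]
The denominator is bounded below, uniformly in $u$ and $n$, by $c\,h_n$ for some constant $c>0$: indeed, by the uniform convergence assumption $h_{n+1}/h_n\to 1$, so both $h_n$ and $h_{n+1}$ are pinched between constant multiples of $h_n$ for $n$ large, and $\rho$ takes values in $[0,1]$. Since $\rho'$ is bounded, the quotient is $O\bigl((h_{[u]+1}-h_{[u]})/h_{[u]}\bigr)$ uniformly in $X$, which is the stated estimate.

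The only subtle point is the smooth matching at the integer endpoints: if one merely took $\rho$ linear, $u\mapsto H(u)(s,s)$ would only be continuous and piecewise-smooth. Using a bump-type $\rho$ with all derivatives vanishing at $0$ and $1$ removes this obstacle, and the rest of the argument is a routine verification.
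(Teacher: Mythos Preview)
Your proposal is correct and follows essentially the same construction as the paper: convex interpolation between consecutive $h_n$ via a smooth transition function $\rho$, with the same differentiation for (5). The paper presents the definition inductively and implicitly relies on $\rho$ being flat near the endpoints for the gluing, whereas you make this explicit by demanding $\rho^{(k)}(0)=\rho^{(k)}(1)=0$; otherwise the arguments coincide (including the tacit use of the uniform convergence hypothesis to get a uniform lower bound on the denominator in (5)).
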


\begin{proof} Let  $(h_p)_{p\geq 2}$ be a discrete sequence of hermitian metrics. For any $n$, let $\rho_n$ be a smooth, positive
and nondecreasing   function on $\R^+$ such that:
\begin{equation}
\rho_n(x)=\left\{
    \begin{array}{ll}
        0,& \quad x\leq n   \\
        1, & \quad x\geq n+1
 \end{array}
\right.
\end{equation}

We can assume that $\rho_n(x)=\rho_1(x-n), \forall x\in \R^+ $.\\

We set $H_1:[1,\infty[\lra \mathcal{M}et$ such that $H_1(u)=h_1 $, $\forall u$. If $\h_2$ is the following function:
$\h_2(u)=(1-\rho_1(u))H_1(u)+\rho_1(u)h_2 $ then it is smooth and  verify $\h_2(1)=H_1(1)=h_1$ and $\h_2(2)=h_2$. By induction on
$k\in \N$, we set  $H_k(u)=(1-\rho_{k-1}(u))H_{k-1}(u)+\rho_{k-1}(u)h_k$, and we have  $H_k(i)=h_i, $ for $i\leq k-1$ and $H_k(k)=h_k$.

Then, we let $H:\R^+\lra \mathcal
{M}et(\mathcal{O})$ such that $H(u)=H_n(u)$ if $u\leq n-1$ (remark that   $H_{n+1}(u)=H_n(u)$) then $H$ is well defined, smooth and   $H(n)=h_n$ for any $n\in \N$.\\

Suppose that $(h_n)_{n\in \N}$ converges uniformly to  $h_\infty$. One proves, using induction on $k$, that {{}
\begin{equation}\label{1}
 H(u)=\bigl(1-\rho_{k-1}(u)\bigr)h_{k-1}+\rho_{k-1}(u)h_k\quad \forall\, u\in [k-1,k] \; \forall k\,\in \N^\ast.
\end{equation}}
Then, if $s$ a nonzero holomorphic local section of  $\mathcal{O}$ on an open subset $U$, we have:
\[
 \biggl|\frac{H(u)(s,s)}{h_\infty(s,s)}-1\biggr|\leq \biggl|\frac{h_{k-1}(s,s)}{h_\infty(s,s)}-1\biggr|+\biggl|\frac{h_k(s,s)}{h_\infty(s,s)}-1\biggr| \quad \forall\, u\in [k-1,k],
\]
then we get $(4.)$.  \\

We have, for any $u>1$:
\[
\begin{split}
\biggl|\frac{\pt }{\pt u}\log H(u)(s,s)\biggr|&=\bigl|h_u(s,s)^{-1}(\partial_u \rho_{k-1})(u)(h_k(s,s)-h_{k-1}(s,s))\bigr|\\
&=\bigl|\partial_u \rho_{k-1}(u) \bigr|\,\Bigl|\frac{h_k(s,s)-h_{k-1}(s,s)}{h_u(s,s)}\Bigr|\\
&\leq \bigl|\partial_u \rho_{k-1}(u) \bigr|\,\Bigl|\frac{h_k-h_{k-1}}{\min(h_{k-1},h_k)}\Bigr|\\
&=\bigl|\partial_u \rho_{k-1}(u) \bigr|\,\max\biggl(\frac{\bigl|h_k-h_{k-1}\bigr|}{h_{k-1}},\frac{\bigl|h_k-h_{k-1}\bigr|}{h_{k}} \biggr).
\end{split}
\]
Since  $\bigl|\partial_u \rho_{k-1}(u) \bigr|$  is uniformly bounded (take for instance, $\rho_k(x)=\rho_1(x-k)$, for any $k\in
\N_{\geq  2}$). Then, there exists a constant $M>0$ such that:
\[
 \biggl|\frac{\pt }{\pt u}\log H(u)\biggr|\leq M \frac{\bigl|h_{[u]+1}-h_{[u]}\bigr|}{h_{[u]}},
\]
for any $u\geq 1$.
\end{proof}



In this paper, we will denote by $h_u$, the metric $H(u)$.

\subsection{On the first nonzero eigenvalue of the Laplacian}
In this section, we recall a result due to Cheeger, which gives a geometric lower bound for the first nonzero eigenvalue of
the Laplacian   $\Delta$, such that $\overline{\mathcal{O}}_0$ is  the trivial line bundle endowed
with a constant metric. This lower bound can be expressed in  terms of the geometry of the manifold, see the theorem below.

 Let $(h_{X,p})_{p\in \N} $ be a bounded sequence of  smooth hermitian metrics on $X$, if we denote by
 $\la_{p,1}$,  for any $p\in \N$, the first nonzero eigenvalue of  the Laplacian attached to
 $\bigl((TX,h_{X,p}),\overline{\mathcal{O}}_0 \bigr)$, then will show there exists a constant $\kappa>0$ such that
\[
 \la_{p,1}\geq \kappa,\quad \forall\, p\in\N.
\]
\begin{definition}[Cheeger isopermetric constant]\label{Cheeger0} \rm{Let  $(M,g)$ be a compact riemannian manifold of dimension $n$
without boundary. We set:
\[
  h_g(M):=\inf\frac{A(S)}{\min\bigl(V(M_1),V(M_2 )\bigr)},
 \]
where $A(\cdot)$ denote the $(n-1)$ dimensional volume, $V(\cdot)$ is the  volume and the inf is taken over the
set of  compact submanifolds with corners   $S$ of  dimension $n-1$, $M_1$ and $M_2$ are the two open submanifolds
with boundary such
that $M=M_1\cup M_2$ and $\partial M_j=S$, for $j=1,2$.}
\end{definition}

\begin{theorem}\label{Cheeger}
Let $\lambda_1$ be the first nonzero eigenvalue of the Laplacian associated to $(M,g)$, then
\[
 \lambda_1\geq \frac{1}{4}h^2_g(M).
\]

\end{theorem}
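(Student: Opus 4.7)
The plan is to give the classical Cheeger-type argument, combining the Rayleigh quotient characterization of $\lambda_1$, the co-area formula, and the isoperimetric inequality encoded by $h_g(M)$. Let $u$ be a smooth eigenfunction with $\Delta u = \lambda_1 u$. Since $\lambda_1 > 0$, we have $\int_M u\, dV = 0$, so both $M_+ := \{u>0\}$ and $M_- := \{u<0\}$ are nonempty, and after replacing $u$ by $-u$ if necessary we may assume $V(M_+)\leq V(M)/2$. Set $v := u_+ = \max(u,0)$.

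First I would verify the key Rayleigh identity $\int_M |\nabla v|^2\, dV = \lambda_1 \int_M v^2\, dV$. This follows by integration by parts on the open set $M_+$, using that $v\equiv 0$ on $\partial M_+$ and that $-\Delta u = \lambda_1 u$ there (one approximates $v$ by $v_\varepsilon := (u-\varepsilon)_+$, which is compactly supported in $M_+$, and passes to the limit; this is standard). Next I would apply Cauchy--Schwarz:
\[
\int_M |\nabla v^2|\, dV \;=\; 2\int_M v\,|\nabla v|\, dV \;\leq\; 2\Bigl(\int_M v^2\, dV\Bigr)^{1/2}\Bigl(\int_M|\nabla v|^2\, dV\Bigr)^{1/2}.
\]

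The core geometric step is to bound $\int_M |\nabla v^2|\, dV$ from below using $h_g(M)$. By the co-area formula,
\[
\int_M |\nabla v^2|\, dV \;=\; \int_0^{\infty} A\bigl(\{v^2 = t\}\bigr)\, dt.
\]
For almost every $t > 0$, the level set $S_t := \{v^2 = t\}$ is a smooth hypersurface partitioning $M$ into $\{v^2 > t\}$ and $\{v^2 \leq t\}$. Since $\{v^2 > t\}\subseteq M_+$ and $V(M_+) \leq V(M)/2$, the set $\{v^2>t\}$ has the smaller volume, so Definition \ref{Cheeger0} gives $A(S_t)\geq h_g(M)\, V(\{v^2>t\})$. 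Integrating and using the layer-cake formula $\int_0^\infty V(\{v^2>t\})\, dt = \int_M v^2\, dV$, I obtain
\[
\int_M |\nabla v^2|\, dV \;\geq\; h_g(M)\int_M v^2\, dV.
\]

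Combining the two inequalities with the Rayleigh identity yields
\[
h_g(M)\int_M v^2\, dV \;\leq\; 2\Bigl(\int_M v^2\, dV\Bigr)^{1/2}\Bigl(\lambda_1 \int_M v^2\, dV\Bigr)^{1/2} \;=\; 2\sqrt{\lambda_1}\int_M v^2\, dV,
\]
and dividing by $\int_M v^2\, dV > 0$ gives $h_g(M) \leq 2\sqrt{\lambda_1}$, i.e., $\lambda_1 \geq \tfrac14 h_g(M)^2$. The main technical obstacle is the justification of the Rayleigh identity for $v=u_+$, which requires a careful approximation argument to handle the non-smoothness of $v$ along $\{u=0\}$; once this is in hand, the co-area step and Cauchy--Schwarz are routine. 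Alternatively, one may bypass this subtlety by working directly from the min-max principle, plugging $v$ into the Rayleigh quotient (noting $\int_M v\neq 0$ is handled by subtracting its mean) and losing only constants that can be absorbed.
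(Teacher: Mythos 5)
Your main argument is correct and complete: it is the classical Cheeger argument (Rayleigh identity for $v=u_+$ on the smaller nodal domain, Cauchy--Schwarz, co-area formula, and the isoperimetric bound $A(S_t)\geq h_g(M)\,V(\{v^2>t\})$ valid because $V(M_+)\leq V(M)/2$ forces $\{v^2>t\}$ to be the smaller piece). The paper itself gives no proof of this theorem --- it only cites Cheeger's original article --- so your write-up supplies exactly the argument that reference contains, and the one technical point you flag (justifying $\int_M|\nabla u_+|^2=\lambda_1\int_M u_+^2$ via the cutoffs $(u-\varepsilon)_+$ and Sard's theorem) is handled in the standard, correct way. One caution: the alternative you sketch in the last sentence (plugging $v$ minus its mean into the min-max characterization and ``absorbing constants'') would \emph{not} recover the sharp constant $\tfrac14$ claimed in the statement, so you should not present it as an equivalent route; stick with the main argument.
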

\begin{proof}
 See \cite{Cheeger}.
\end{proof}
\begin{remarque}
\rm{
\begin{enumerate}
 \item $h_{tg}(M)=t^{-\frac{1}{2}}h_g(M)$, for any $t>0$. (follows from the definition of $h_g(M)$).
 \item $h_g(M)$ is a positive real, see \cite[p.198]{Cheeger} for  $n=2$.
\end{enumerate}
}
\end{remarque}

\begin{proposition}\label{Cheeger3}
Let us consider a bounded sequence $\bigl(g_{p} \bigr)_{p\in \N_{\geq 2}}$ of smooth riemannian metrics on $X$, namely, we suppose
there exist $c_1$ and $c_2$ two nonzero constants such that:
\begin{equation}\label{gpq}
c_1\, g_{p} \leq g_{q}\leq c_2 \, g_{p}\quad \forall \, p,q\in \N.
\end{equation}
 Then
\begin{equation}\label{infboundvp}
\frac{c_1}{c_2} h_{g_p}(M)\leq h_{g_q}(M)\leq \frac{c_2}{c_1}h_{g_p}(M)\quad \forall\, p,q\in \N.
\end{equation}

\end{proposition}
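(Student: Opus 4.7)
The strategy is to feed the pointwise inequalities $c_1\,g_p \leq g_q \leq c_2\,g_p$ directly into the definition \eqref{Cheeger0} of the Cheeger constant, using the fact that both the codimension-one area and the $n$-dimensional volume of a domain are monotone in the ambient metric. No analytic input is needed; it is purely a comparison of the functionals $A$ and $V$.

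First, the hypothesis $c_1\,g_p \leq g_q \leq c_2\,g_p$ as quadratic forms on $TX$ gives, for every tangent vector $v$, the inequality $\sqrt{c_1}\|v\|_{g_p} \leq \|v\|_{g_q} \leq \sqrt{c_2}\|v\|_{g_p}$. Integrating this norm estimate over a compact submanifold with corners $S$ of codimension one yields
\[
\sqrt{c_1}\,A_{g_p}(S) \;\leq\; A_{g_q}(S) \;\leq\; \sqrt{c_2}\,A_{g_p}(S).
\]
For the volumes, since the volume density is $\sqrt{\det g}$ and $\dim X = 2$, the inequality on the metrics passes to the determinants with exponent $n/2 = 1$, giving
\[
c_1\,V_{g_p}(M_j) \;\leq\; V_{g_q}(M_j) \;\leq\; c_2\,V_{g_p}(M_j)\qquad (j=1,2),
\]
and the same double inequality is inherited by $\min\bigl(V_{g_q}(M_1),V_{g_q}(M_2)\bigr)$.

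Combining these two scalings, for any admissible decomposition $M = M_1 \cup M_2$ with $\partial M_1 = \partial M_2 = S$ one obtains
\[
\frac{A_{g_q}(S)}{\min\bigl(V_{g_q}(M_1),V_{g_q}(M_2)\bigr)} \;\leq\; \frac{\sqrt{c_2}}{c_1}\cdot\frac{A_{g_p}(S)}{\min\bigl(V_{g_p}(M_1),V_{g_p}(M_2)\bigr)}.
\]
Specializing the hypothesis to $p=q$ forces $c_1 \leq 1 \leq c_2$, whence $\sqrt{c_2}/c_1 \leq c_2/c_1$. Taking the infimum over all such $(S,M_1,M_2)$ therefore gives $h_{g_q}(M) \leq (c_2/c_1)\,h_{g_p}(M)$. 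The symmetric argument, using the lower bounds $\sqrt{c_1}\,A_{g_p}(S) \leq A_{g_q}(S)$ and $V_{g_q}(M_j) \leq c_2\,V_{g_p}(M_j)$, together with $\sqrt{c_1}/c_2 \geq c_1/c_2$, yields $h_{g_q}(M) \geq (c_1/c_2)\,h_{g_p}(M)$.

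\textbf{Expected difficulty.} There is no real obstacle; the argument is essentially mechanical. The only point to check carefully is that the slightly non-sharp constants $c_1/c_2$ and $c_2/c_1$ claimed in the statement genuinely follow from the sharper $\sqrt{c_1}/c_2$ and $\sqrt{c_2}/c_1$ that emerge from the direct estimate, which is why one needs the a priori observation $c_1 \leq 1 \leq c_2$.
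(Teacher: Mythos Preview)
Your argument is correct and is precisely the content behind the paper's one-line proof, which merely says ``it suffices to note that \eqref{gpq} is stable by restriction to submanifolds of $M$.'' You have simply spelled out what that restriction actually gives for the $(n-1)$-dimensional area and the $n$-dimensional volume, and then taken the quotient. Your observation that setting $p=q$ forces $c_1\leq 1\leq c_2$, and hence that the natural constants $\sqrt{c_2}/c_1$ and $\sqrt{c_1}/c_2$ can be relaxed to the stated $c_2/c_1$ and $c_1/c_2$, is a clarification the paper omits entirely; without it the constants in \eqref{infboundvp} would look unjustified.
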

\begin{proof}

It suffices to note that \eqref{gpq} is stable by restriction to submanifolds of $M$.
\end{proof}

\begin{proposition}\label{lowerbound}
Let $(g_p)_{p\in \N}$ be as before, then there exists $\kappa$ a positive constant such that:
\[
 \lambda_{p,1}\geq \kappa\quad   \forall p\in \N,
\]
where $\lambda_{p,1}$ is the first nonzero eigenvalue of the  Laplacian associated to $g_p$.
\end{proposition}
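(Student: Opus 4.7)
The plan is to combine Cheeger's inequality (Theorem \ref{Cheeger}) with the uniform comparison of Cheeger isoperimetric constants provided in Proposition \ref{Cheeger3}. Since the bound $\lambda_{p,1} \geq \frac{1}{4} h_{g_p}(X)^2$ holds for every $p$, it suffices to find a uniform positive lower bound for $h_{g_p}(X)$ as $p$ varies in $\N$.

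Concretely, I would first pick a reference index, say $p_0 = 1$. By the remark immediately following Definition \ref{Cheeger0}, $h_{g_{p_0}}(X)$ is a strictly positive real number (this is where one uses that $g_{p_0}$ is a genuine smooth Riemannian metric on the compact surface $X$). Next, invoke the two-sided comparison \eqref{infboundvp} of Proposition \ref{Cheeger3}, which gives
\[
h_{g_p}(X) \geq \frac{c_1}{c_2}\, h_{g_{p_0}}(X) \quad \forall\, p \in \N.
\]
Finally, apply Theorem \ref{Cheeger} to each metric $g_p$ to conclude
\[
\lambda_{p,1} \geq \frac{1}{4} h_{g_p}(X)^2 \geq \frac{1}{4}\Bigl(\frac{c_1}{c_2}\Bigr)^2 h_{g_{p_0}}(X)^2 =: \kappa > 0,
\]
which is the desired uniform lower bound.

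There is essentially no hard step here: the work has already been done in Theorem \ref{Cheeger} (Cheeger's inequality) and in Proposition \ref{Cheeger3} (the stability of the isoperimetric constant under uniformly equivalent metrics). The only point worth verifying carefully is that $h_{g_{p_0}}(X) > 0$, which is the content of the remark after Definition \ref{Cheeger0} for smooth metrics on compact surfaces; everything else is a direct concatenation of inequalities.
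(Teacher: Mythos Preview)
Your proof is correct and follows exactly the same route as the paper, which simply states that the result follows from Theorem \ref{Cheeger} and Proposition \ref{Cheeger3}. You have merely spelled out the one-line argument in full detail, including the use of the remark that $h_{g_{p_0}}(X)>0$.
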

\begin{proof}
It follows from \eqref{Cheeger} and \eqref{Cheeger3}.
\end{proof}

\bibliographystyle{plain}
\bibliography{biblio}

\begin{thebibliography}{10}

\bibitem{Table2}
Milton Abramowitz and Irene~A. Stegun, editors.
\newblock {\em Handbook of mathematical functions with formulas, graphs, and
  mathematical tables}.
\newblock Dover Publications Inc., New York, 1992.
\newblock Reprint of the 1972 edition.

\bibitem{heat}
Nicole Berline, Ezra Getzler, and Mich{\`e}le Vergne.
\newblock {\em Heat kernels and {D}irac operators}.
\newblock Grundlehren Text Editions. Springer-Verlag, Berlin, 2004.
\newblock Corrected reprint of the 1992 original.

\bibitem{BGS1}
J.-M. Bismut, H.~Gillet, and C.~Soul{\'e}.
\newblock Analytic torsion and holomorphic determinant bundles. {I}.
  {B}ott-{C}hern forms and analytic torsion.
\newblock {\em Comm. Math. Phys.}, 115(1):49--78, 1988.

\bibitem{Buser}
Peter Buser.
\newblock {\em Geometry and spectra of compact {R}iemann surfaces}.
\newblock Modern Birkh\"auser Classics. Birkh\"auser Boston Inc., Boston, MA,
  2010.
\newblock Reprint of the 1992 edition.

\bibitem{Cheeger}
Jeff Cheeger.
\newblock A lower bound for the smallest eigenvalue of the {L}aplacian.
\newblock In {\em Problems in analysis ({P}apers dedicated to {S}alomon
  {B}ochner, 1969)}, pages 195--199. Princeton Univ. Press, Princeton, N. J.,
  1970.

\bibitem{Dieudonné2}
J.~Dieudonn{\'e}.
\newblock {\em \'{E}l\'ements d'analyse. {T}ome {II}: {C}hapitres {XII} \`a
  {XV}}.
\newblock Cahiers Scientifiques, Fasc. XXXI. Gauthier-Villars, \'Editeur,
  Paris, 1968.

\bibitem{Character}
Henri Gillet and Christophe Soul{\'e}.
\newblock Characteristic classes for algebraic vector bundles with {H}ermitian
  metric. {I}.
\newblock {\em Ann. of Math. (2)}, 131(1):163--203, 1990.

\bibitem{character2}
Henri Gillet and Christophe Soul{\'e}.
\newblock Characteristic classes for algebraic vector bundles with {H}ermitian
  metric. {II}.
\newblock {\em Ann. of Math. (2)}, 131(2):205--238, 1990.

\bibitem{GH}
Phillip Griffiths and Joseph Harris.
\newblock {\em Principles of algebraic geometry}.
\newblock Wiley Classics Library. John Wiley \& Sons Inc., New York, 1994.
\newblock Reprint of the 1978 original.

\bibitem{Mounir1}
Mounir Hajli.
\newblock {S}pectre du {L}aplacien singulier associé aux métriques canoniques
  sur la droite projective complexe.
\newblock {\em Arxiv}.

\bibitem{Mounir2}
Mounir Hajli.
\newblock {S}ur la fonction zêta associée au laplacien singulier associé aux
  métriques canoniques sur la droite projective complexe.
\newblock {\em Arxiv}.

\bibitem{these}
Mounir Hajli.
\newblock \emph{{T}héorie spectrale pour certaines métriques singulières et
  {G}éométrie d'{A}rakelov}.
\newblock {\em \rm{PhD thesis}}, Université Pierre et Marie Curie, 2012.

\bibitem{functional2}
Francis Hirsch and Gilles Lacombe.
\newblock {\em Elements of functional analysis}, volume 192 of {\em Graduate
  Texts in Mathematics}.
\newblock Springer-Verlag, New York, 1999.
\newblock Translated from the 1997 French original by Silvio Levy.

\bibitem{compactriemannsurfaces}
J{\"u}rgen Jost.
\newblock {\em Compact {R}iemann surfaces}.
\newblock Universitext. Springer-Verlag, Berlin, third edition, 2006.
\newblock An introduction to contemporary mathematics.

\bibitem{functional}
Serge Lang.
\newblock {\em Real and functional analysis}, volume 142 of {\em Graduate Texts
  in Mathematics}.
\newblock Springer-Verlag, New York, third edition, 1993.

\bibitem{Ma}
Xiaonan Ma and George Marinescu.
\newblock {\em Holomorphic {M}orse inequalities and {B}ergman kernels}, volume
  254 of {\em Progress in Mathematics}.
\newblock Birkh\"auser Verlag, Basel, 2007.

\bibitem{RaySinger}
D.~B. Ray and I.~M. Singer.
\newblock Analytic torsion for complex manifolds.
\newblock {\em Ann. of Math. (2)}, 98:154--177, 1973.

\bibitem{Simon}
Barry Simon.
\newblock {\em Trace ideals and their applications}, volume 120 of {\em
  Mathematical Surveys and Monographs}.
\newblock American Mathematical Society, Providence, RI, second edition, 2005.

\bibitem{Soulé}
C.~Soul{\'e}.
\newblock {\em Lectures on {A}rakelov geometry}, volume~33 of {\em Cambridge
  Studies in Advanced Mathematics}.
\newblock Cambridge University Press, Cambridge, 1992.
\newblock With the collaboration of D. Abramovich, J.-F. Burnol and J. Kramer.

\bibitem{Taylor}
Michael~E. Taylor.
\newblock {\em Partial differential equations {I}. {B}asic theory}, volume 115
  of {\em Applied Mathematical Sciences}.
\newblock Springer, New York, second edition, 2011.

\bibitem{Voisin}
Claire Voisin.
\newblock {\em Th\'eorie de {H}odge et g\'eom\'etrie alg\'ebrique complexe},
  volume~10 of {\em Cours Sp\'ecialis\'es [Specialized Courses]}.
\newblock Soci\'et\'e Math\'ematique de France, Paris, 2002.

\end{thebibliography}

\vspace{1cm}

\begin{center}
{\sffamily \noindent National Center for Theoretical Sciences, (Taipei Office)\\
 National Taiwan University, Taipei 106, Taiwan}\\

 {e-mail}: {hajli@math.jussieu.fr}

\end{center}

\end{document}